\newtheorem{alphtheorem}{Theorem}
\newtheorem{alphlemma}{Lemma}
\newtheorem{question}{Question}
\newtheorem{lemma}{Lemma}
\newtheorem{theorem}{Theorem}
\newtheorem{corollary}{Corollary}
\theoremstyle{definition}
\newtheorem{definition}{Definition}
\DeclareMathOperator\St{\mathcal{S}}
\DeclareMathOperator\J{\mathcal{J}}
\DeclareMathOperator\N{\mathcal{N}}
\DeclareMathOperator\LL{\mathcal{L}}
\DeclareMathOperator\F{\mathcal{F}}
\DeclareMathOperator\HM{\mathcal{HM}}
\DeclareMathOperator\B{\mathcal{B}}
\DeclareMathOperator\KG{\operatorname{KG}}
\def\isdef{\mbox {$\ \stackrel{\rm def}{=} \ $}}
\title{ size and structure of large $(s,t)$-union intersecting families }
\author{Ali Taherkhani}
\address{
Department of Mathematics, Institute for Advanced Studies in Basic Sciences (IASBS), Zanjan 45137-66731, Iran}
\address{School of Mathematics, Institute for Research in Fundamental Sciences (IPM), P.O. Box 19395-5746, Tehran, Iran}\email{ali.taherkhani@iasbs.ac.ir}
\begin{document}
\maketitle 

\begin{abstract}
A  family $\F$ of sets is said to be intersecting  if
any two sets in $\F$  have nonempty intersection. The celebrated Erd{\H o}s-Ko-Rado theorem determines the size and structure of the largest intersecting  family of $k$-sets on an $n$-set $X$.  Also, the Hilton-Milner theorem determines the size and structure of the second largest intersecting family of $k$-sets.
An $(s,t)$-union intersecting family is a family of $k$-sets on an $n$-set $X$ such that
for any $A_1,\ldots,A_{s+t}$ in this family,
$\left(\cup_{i=1}^s A_i\right)\cap\left(\cup_{i=1}^t A_{i+s}\right)\neq \varnothing.$ 
Let $\ell(\F)$ be the 
 minimum number of sets in $\F$ such that by removing them  the resulting subfamily
is intersecting. 
In this paper, for $t\geq s\geq 1$ and sufficiently large $n$,
we characterize the size  and structure of  $(s,t)$-union intersecting families with  maximum possible size and $\ell(\F)\geq s+\beta$, where $\beta$ is a nonnegative integer. 
This allows  us to find out the size  and structure  of some large and maximal $(s,t)$-union intersecting families.
 Our results are nontrivial extensions of some recent generalizations of the Erd{\H o}s-Ko-Rado theorem such as 
  the Han and Kohayakawa theorem~[Proc. Amer. Math. Soc. 145 (2017), pp. 73--87]
 which finds the structure of the third largest intersecting family,  
 the Kostochka  and Mubayi theorem~[Proc. Amer. Math. Soc. 145 (2017), pp. 2311--2321], and the more recent 
 Kupavskii's theorem [arXiv:1810.009202018 (2018)] whose both results determine
  the size and structure of the $i$th largest intersecting family of $k$-sets for 
 $i\leq k+1$.  In particular, when $s=1$, 
  we prove that a  Hilton-Milner-type stability theorem holds for $(1,t)$-union intersecting families, that
indeed, confirms a conjecture of Alishahi and Taherkhani~[J. Combin. Theory Ser. A 159 (2018), pp. 269--282]. 

\noindent As the induced subgraph on an $(s,t)$-union intersecting family in the Kneser graph $\KG_{n,k}$ is a  $K_{s,t}$-free subgraph, we can  extend our results to
$K_{s_1,\ldots,s_{r+1}}$-free subgraphs of Kneser graphs. In fact, when $n$ is sufficiently large, we  characterize the
size  and structure  of large and maximal $K_{s_1,\ldots,s_{r+1}}$-free subgraphs of Kneser graphs. In particular, when $s_1=\cdots=s_{r+1}=1$ our result provides some stability results  related to  the famous Erd{\H o}s matching conjecture.
\\ \\
\end{abstract}

\section{Introduction and Main Result}
\subsection{Erd{\H o}s-Ko-Rado theorem and its generalization}
Let $n$ and $k$ be two positive integers such that $n\geq k$. The symbol $[n]$ stands for the set
$\{1,\ldots,n\}$ and the symbol $[k,n]$ stands for the set $[n]\setminus [k-1]$. The family of all $k$-element subsets (or $k$-sets) of $[n]$ is denoted by $[n]\choose k$.  In this paper, we only consider  families  which 
consist of  $k$-sets on $[n]$.
 A family $\F$ is said to be {\it intersecting} if  the intersection 
of every two members of $\F$ is non-empty. If all members of $\F$ contain a fixed element of $[n]$, then it is clear that $\F$ is an intersecting family
 which is called 
 a {\it star} or a {\it trivial} family. For each $i\in[n]$, the family $\St_i\isdef\{A\in{[n]\choose k}|i\in A \}$ 
is a maximal star. Also, the following two families are well-known examples for intersecting families.
 Let $B$ be a $k$-set of $[n]$ such that $1\not\in B$. Define
$$\HM \isdef\{A|\,1\in A,\,\, A\cap B\neq\varnothing\} \cup \{B\}$$
and
$$\HM' \isdef\{A|\,|A\cap\{1,2,3\}|\geq 2\}.$$
Note that for $2\le k\leq 3$, we have $|\HM|=|\HM'|$  and if  $n>2k$ and $k\geq4$, then $|\HM|>|\HM'|$.

The well-known Erd{\H o}s-Ko-Rado theorem~\cite{EKR61} states that every intersecting family of ${[n]\choose k}$ has  cardinality
at most ${n-1\choose k-1}$ provided that $n\geq 2k$; moreover, if $n>2k$, then  the only intersecting families  of this cardinality are maximal stars.

As a generalization of the Erd{\H o}s-Ko-Rado theorem, Hilton and Milner~\cite{HilMil67} proved a useful and interesting stability  result. 
They showed that for $n>2k$ the maximum possible  size of a nontrivial intersecting family $\F$ of ${[n]\choose k}$  is ${n-1\choose k-1}-{n-k-1\choose k-1}+1$. Furthermore, equality is possible only for a family $\F$ which is isomorphic to $\HM$ or $\HM'$, the latter can hold only  for $k\leq 3$.

 A family $\F$ is called a {\it Hilton-Milner family} if $\F$ is isomorphic to a subfamily of $\HM$ for some $k$  or   it is isomorphic to a subfamily of  $\HM'$  for $k\in\{2,3\}$.

There  also exist some other interesting extensions of Erd{\H o}s-Ko-Rado and Hilton-Milner theorems in the literature ( e.g.\cite{MA,MR3482268,MR3403515,DeFr1983,MR0480051,MR1415313,Frankl13,FRANKL20121388,FranLucMie12,MR3022158,GR,MR2489272,HK,KATONA1972183,KoMu,MR2202076,Kup,MR2285800,MR771733}).

The Erd{\H o}s-Ko-Rado theorem  determines the maximum size of an intersecting family of $k$-sets on $[n]$ and the Hilton-Milner theorem shows that 
a nontrivial intersecting family has  cardinality at most ${n-1\choose k-1}-{n-k-1\choose k-1}+1.$
 Beyond Hilton-Milner theorem, it was shown by Hilton and Milner~\cite{HilMil67} that the maximum size of a nontrivial intersecting family which is not a   Hilton-Milner family is at most 
${n-1\choose k-1}-{n-k-1\choose k-1}-{n-k-2\choose k-2}+2$. In fact they proved the following interesting result (see \cite{HK,HilMil67}).
\begin{alphtheorem}{\rm \cite{HilMil67}}\label{HM}
Let $n,k$, and $s$ be positive integers with $\min\{3,s\}\leq k\leq{n\over 2}$ and let $\F=\{A_1,\ldots,A_m\}$ be an intersecting family on $[n]$.
If for any $S\subset [m]$
with $|S|>m-s$, we have $\cap_{i\in S}A_i=\varnothing$, then
\begin{equation}\label{eq:hm}
m\leq \left\{\begin{array}{lll} 
{n-1\choose k-1}-{n-k\choose k-1}+n-k&&\,\, {\rm if}\,\, 2<k\le s+2,\\
&&\\
{n-1\choose k-1}-{n-k\choose k-1}+{n-k-s\choose k-s-1}+s&&\,\, {\rm if}\,\, k\leq 2\,\, {\rm or}\,\, k\geq s+2.
       \end{array}
 \right.
\end{equation}
Moreover, the bounds in Inequality~$\rm({ \ref{eq:hm}})$ are the best possible.
\end{alphtheorem}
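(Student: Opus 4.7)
The plan is to prove Theorem~\ref{HM} by a direct counting argument refined by a compression step. First reformulate the hypothesis: the statement that $\cap_{i\in S}A_i=\varnothing$ for every $S\subset[m]$ with $|S|>m-s$ is equivalent to saying that every element $x\in[n]$ lies in at most $m-s$ members of $\F$, or equivalently, at least $s$ members of $\F$ avoid each $x\in[n]$. Call this the \emph{$s$-diversity} condition.

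Choose an element $x\in[n]$ of maximum degree in $\F$; WLOG $x=1$, and write $\F(1)=\{A\in\F:1\in A\}$ and $\F^{(1)}=\F\setminus\F(1)$. The $s$-diversity at $1$ gives $|\F^{(1)}|\ge s$. Since $\F$ is intersecting, every member of $\F(1)$ meets each $B\in\F^{(1)}$. I would next argue, by an $(i,j)$-shifting operation applied carefully within $\F^{(1)}$ and only to indices greater than $1$ (so as to preserve the intersection relations with $\F(1)$), that we may reduce to a family in which there is a fixed $(k-1)$-set $T\subset[2,n]$ such that every $A\in\F(1)$ meets $T$. This yields
\[|\F(1)|\le\binom{n-1}{k-1}-\binom{n-k}{k-1},\]
the count of $k$-sets through $1$ that meet $T$.

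The more delicate step is bounding $|\F^{(1)}|$. Since $\F^{(1)}\subset\binom{[2,n]}{k}$ is itself intersecting, and by a symmetric argument each of its members meets a fixed $(k-1)$-set, I would split $\F^{(1)}$ into sets containing a natural $(s+1)$-element kernel $K\supset T$ versus ``exceptional'' sets not containing $K$. In the regime $k\ge s+2$ a careful count gives
\[|\F^{(1)}|\le\binom{n-k-s}{k-s-1}+s,\]
where the first term enumerates $k$-sets extending the kernel and the summand $+s$ accounts for the forced exceptional sets required by $s$-diversity.

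The main obstacle is controlling the exceptional sets and, in parallel, tracking the equality case through the compression to identify the extremal families. The dichotomy between the two bounds in inequality~\eqref{eq:hm} arises because in the regime $2<k\le s+2$ a kernel of size $s+1$ cannot sit inside a $k$-set with the required structure, so a different extremal family with bound $n-k$ dominates. Adding the bounds on $|\F(1)|$ and $|\F^{(1)}|$ and verifying tightness gives the inequality and its extremal characterization.
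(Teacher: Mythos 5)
Two preliminary remarks: the paper does not prove Theorem~\ref{HM} at all --- it is quoted from Hilton--Milner \cite{HilMil67}, with a simpler proof attributed to Han and Kohayakawa \cite{HK} --- so there is no in-paper argument to compare yours with, and your opening reformulation (the hypothesis is equivalent to every element of $[n]$ being avoided by at least $s$ members of $\F$, whence $|\F^{(1)}|\ge s$) is correct.

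The core of your sketch, however, contains a genuine flaw: the two bounds you propose to add are misallocated between $\F(1)$ and $\F^{(1)}$, and the first of them is false. Look at the extremal family $\J_s$ of Definition~\ref{def1}, with $J=\{1,x_1,\ldots,x_s\}$ and $E$ a $(k-1)$-set disjoint from $J$; it satisfies the hypothesis of the theorem, attains the bound, and $1$ is its maximum-degree element (for $k\ge3$, $n\ge 2k$). In $\J_s$ the sets through $1$ are all $k$-sets containing $1$ that meet $E$ \emph{together with} the $\binom{n-k-s}{k-s-1}$ sets that contain $J$ and are disjoint from $E$, so $|\F(1)|=\binom{n-1}{k-1}-\binom{n-k}{k-1}+\binom{n-k-s}{k-s-1}$, while $|\F^{(1)}|=s$ (the sets $E\cup\{x_j\}$). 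Thus your claimed bound $|\F(1)|\le\binom{n-1}{k-1}-\binom{n-k}{k-1}$ fails for the equality family itself whenever $k\ge s+1$ and $n\ge 2k$, and no compression of the kind you describe can rescue it: shifts acting only on coordinates in $[2,n]$ never change whether $1\in A$, hence preserve $|\F(1)|$, so a cardinality-preserving reduction to a family in which every member of $\F(1)$ meets a fixed $(k-1)$-set would prove exactly that false inequality. Symmetrically, the term $\binom{n-k-s}{k-s-1}$ cannot be charged to $\F^{(1)}$: it counts $k$-sets containing the $(s+1)$-kernel $J\ni 1$ and avoiding $E$, whereas members of $\F^{(1)}$ avoid $1$; and your kernel condition $K\supset T$ with $|K|=s+1$ and $|T|=k-1$ is dimensionally impossible once $k>s+2$, i.e.\ precisely in the regime you invoke it. Finally, the compression step is unjustified on its own terms: $(i,j)$-shifts are not known (and are not argued by you) to preserve the hypothesis that every element is avoided by at least $s$ members --- they drive the family toward a star, which destroys exactly that condition --- so even the reduction to a ``nicely structured'' family needs a real argument. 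The actual trade-off one must analyze is between $\F^{(1)}$ (at least $s$ sets, each of which every $A\in\F(1)$ must meet) and the resulting restriction on $\F(1)$, with the term $\binom{n-k-s}{k-s-1}$ arising on the $\F(1)$ side from sets missing the common part of $\F^{(1)}$ and therefore forced to contain all the ``tips''; your decomposition does not capture this, so the proposal does not yield the theorem.
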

Recently, Han and Kohayakawa  gave a different and simpler proof of  Theorem~\ref{HM}. 
Moreover, they characterized all extremal families achieving the bounds in (\ref{eq:hm}) (for more details see~{\cite{HK}}).
In this regard they introduced the following  construction.
\begin{definition}\label{def1}
Let $i$ be a nonnegative integer. For any $(i+1)$-set $J\subset [n]$ with $1\in J$ and any $(k-1)$-set $E\subset [n]\setminus J$, 
define the family $\J_{i}$  as follows,
 $$\J_{i}\isdef\{A: E\subset A,\,\,A\cap J\neq \varnothing\}\cup\{A:J\subset A\}\cup\{A:1\in A,\,\, A\cap E\neq \varnothing\}.$$
\end{definition}
  Note that $\J_0=\St_{1}$, $\J_{1}=\HM$,  $|\J_{i}|={n-1\choose k-1}-{n-k\choose k-1}+{n-k-i\choose k-i-1}+i$, and $|\J_i\setminus \St_1|=i$.
\begin{alphtheorem}{\rm\cite{HK}\label{HK}}
 Let $n,k$  be positive integers with $3\leq k<{n\over 2}$ and let $\F$ be an intersecting family of $k$-sets on $[n]$.
Assume that $\F$ is neither a  star nor a Hilton-Milner family. Then $|\F|\leq |\J_2|$.
Moreover, for $k\geq 5$, equality holds if and only if $\F$ is isomorphic to $\J_2$.
\end{alphtheorem}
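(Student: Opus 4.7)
The plan is to analyze $\F$ through the lens of its maximum-degree element. Let $v$ be an element of $[n]$ of maximum $\F$-degree; after relabeling assume $v=1$, and split $\F = \F_1 \sqcup \F_{\bar 1}$ according to whether a set contains $1$. Since $\F$ is not a star, $|\F_{\bar 1}|\geq 1$. If $|\F_{\bar 1}|=1$, then $\F$ is contained in $\HM$ (taking $B$ to be the unique set of $\F_{\bar 1}$), making it a Hilton-Milner family, contrary to hypothesis; hence $|\F_{\bar 1}|\geq 2$. For $k=3$ the exceptional family $\HM'$ has larger $|\F_{\bar 1}|$, so we additionally verify $\F\not\subseteq\HM'$ by a direct small-$k$ check.

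\medskip

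The core estimate is an inclusion--exclusion bound on $|\F_1|$ using any two sets $B_1,B_2\in\F_{\bar 1}$. Since every $A\in\F_1$ meets both $B_i$, writing $t=|B_1\cap B_2|$, we have
\[
|\F_1|\leq \binom{n-1}{k-1}-2\binom{n-k-1}{k-1}+\binom{n-2k+t-1}{k-1}.
\]
For $n$ large, the right-hand side is strictly increasing in $t\in\{0,1,\ldots,k-1\}$ and is maximized at $t=k-1$, where by Pascal's identity it equals $|\J_2|-2$. Hence when $|\F_{\bar 1}|=2$ with $|B_1\cap B_2|=k-1$ we obtain $|\F|\leq |\J_2|$; moreover equality forces $B_1,B_2$ to have the form $E\cup\{j_1\},E\cup\{j_2\}$ for some $(k-1)$-set $E$ disjoint from $\{1,j_1,j_2\}$, and then forces $\F_1$ to consist of every $k$-set through $1$ meeting both $B_i$, i.e., $\F\cong\J_2$ for $k\geq 5$. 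If instead $|\F_{\bar 1}|=2$ with $t\leq k-2$, the same inequality yields $|\F|<|\J_2|$ for $n$ large, since the difference between $\binom{n-k-2}{k-1}$ and $\binom{n-2k+t-1}{k-1}$ has order $n^{k-2}$.

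\medskip

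It remains to handle $|\F_{\bar 1}|\geq 3$. Choose $B_1,B_2\in\F_{\bar 1}$ with $t=|B_1\cap B_2|$ maximal. If $t\leq k-2$, the inclusion--exclusion bound above loses at least $\binom{n-k-2}{k-2}$ compared with the $t=k-1$ case, and this loss exceeds the at-most-$\binom{n-2}{k-1}$ contribution from $|\F_{\bar 1}|$ (bounded by Erd{\H o}s-Ko-Rado on $[n]\setminus\{1\}$) once we exploit that $1$ is a maximum-degree vertex to prevent $|\F_{\bar 1}|$ from being very large simultaneously with $|\F_1|$ near the Hilton-Milner threshold. If $t=k-1$, write $B_1=E\cup\{j_1\}$, $B_2=E\cup\{j_2\}$, and take a third $B_3\in\F_{\bar 1}$; the extra constraint $A\cap B_3\neq\varnothing$ on $\F_1$ strictly reduces the inclusion--exclusion count by a term of order $n^{k-2}$, which again dominates the unit gain in $|\F_{\bar 1}|$. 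The main technical obstacle is this last quantitative bookkeeping in the regime where $|\F_{\bar 1}|$ is comparable to $\binom{n-2}{k-1}$: there one iterates by replacing $1$ with another heavy element of $\F_{\bar 1}$ and feeds the EKR and Hilton-Milner bounds back into $\F_{\bar 1}$ itself to close the gap, while for $k\geq 5$ the structural tightness of the inclusion--exclusion bound at $t=k-1$ pins down the extremal configuration to $\J_2$ up to isomorphism.
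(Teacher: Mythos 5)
This statement is Theorem~\ref{HK}, which the paper does not prove at all: it is quoted from Han--Kohayakawa \cite{HK}, so there is no in-paper proof to compare against. Judged on its own, your proposal has a genuine gap. The easy half ($|\F_{\bar 1}|\le 2$, where $\F_{\bar 1}$ denotes the members avoiding the max-degree element $1$) is fine: the inclusion--exclusion count of sets containing $1$ and meeting both $B_1,B_2$ is indeed $|\J_2|-2$ when $|B_1\cap B_2|=k-1$, and smaller otherwise. But the case $|\F_{\bar 1}|\ge 3$, which is where all the work lies, is not actually proved. Your claim that a third set $B_3$ cuts the count by a term of order $n^{k-2}$ is false precisely in the most relevant configuration: if $B_3\supseteq E$ (so $\F_{\bar 1}\subseteq\{E\cup\{x\}:x\notin E\}$, the configurations giving the families $\J_m$), the sets containing $1$, meeting $B_1,B_2$ and missing $B_3$ number only $\binom{n-k-3}{k-3}=O(n^{k-3})$, so the asserted domination argument does not apply as stated and this family must be handled separately. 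Worse, in the subcase $t\le k-2$ you bound $|\F_{\bar 1}|$ only by $\binom{n-2}{k-1}=\Theta(n^{k-1})$, which swamps your $\Theta(n^{k-2})$ loss; the sentence about ``exploiting that $1$ has maximum degree'' and ``iterating by replacing $1$ with another heavy element'' is an acknowledgement of the missing argument, not an argument. No mechanism is given to control $|\F_{\bar 1}|$ when it is large, and the equality characterization for $k\ge 5$ in the $|\F_{\bar 1}|\ge 3$ regime is likewise untouched.

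There is also a scope problem: the theorem is asserted for every $n>2k$ with $k\ge 3$, but your estimates are repeatedly invoked ``for $n$ large.'' The regime $n$ close to $2k$ is exactly where crude two-set inclusion--exclusion is weak and where the known proofs (Hilton--Milner, Frankl--F\"uredi, Han--Kohayakawa) resort to shifting/compression or kernel-type arguments; nothing in your sketch replaces that machinery. So even if the $|\F_{\bar 1}|\ge 3$ bookkeeping were completed, you would at best obtain an asymptotic version of the statement, not Theorem~\ref{HK} as stated.
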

 \begin{definition}
For $i\leq k$ let us define
 the family $\F_{i}$ of ${[n]\choose k}$ as follows,
$$\F_i\isdef [2,k+1]\cup[i+1,k+i]\cup \{A: 1\in A, A\cap[2,k+i]=\varnothing\}.$$
\end{definition}

In \cite{KoMu}, Kostochka and Mubayi proved that the size of an intersecting family which is neither a star nor is contained in 
$\J_i$, for $i\in\{1,\ldots,k-1, n-k\}$, is  at  most $|\F_3|$ for $k\geq 5$ and sufficiently large $n=n(k)$. 
Also, more recently Kupavskii \cite{Kup} extended this result and  showed that the same result holds when $5\leq k<{n\over 2}$.
 \begin{alphtheorem}{\rm\cite{KoMu,Kup}\label{Kup}}
Let $n,k$  be positive integers with $5\leq k<{n\over 2}$ and let $\F$ be an intersecting family of $k$-sets on $[n]$ with $|\F|>|\F_3|$. 
Then $\F\subseteq \J_i$ for $i\in\{0,1,\ldots k-1,n-k\}$.
\end{alphtheorem}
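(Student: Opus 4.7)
The strategy is to analyse the \emph{diversity} $\gamma(\F):=|\F|-\Delta(\F)$, where $\Delta(\F):=\max_{v\in[n]}|\F\cap\St_v|$ is the maximum degree of $\F$. After relabelling $[n]$, assume $\Delta(\F)=|\F\cap\St_1|$, and write $\A:=\F\cap\St_1$ and $\B:=\F\setminus\A$, so $1\notin B$ for all $B\in\B$ and $|\B|=\gamma(\F)$.

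The first step is to show that $|\F|>|\F_3|$ forces $\gamma(\F)$ to be small. For each $B\in\B$, the intersecting condition requires every $A\in\A$ to meet $B$, whence
\[
|\A|\;\le\;\binom{n-1}{k-1}-\Bigl|\bigl\{A\in\St_1:A\cap B=\varnothing\bigr\}\Bigr|\;=\;\binom{n-1}{k-1}-\binom{n-k-1}{k-1}.
\]
Iterating this bound with several $B\in\B$ (which must pairwise intersect) gives a quantitative trade-off $|\A|+f(|\B|)\le |\F_3|$ whenever the sets of $\B$ are not highly aligned; together with $|\F|>|\F_3|$ this forces either $\gamma(\F)\le k-1$ or else a very rigid alignment of $\B$. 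In particular, the case $\tau(\F)\ge 3$ is handled here by a classical estimate of Frankl giving $|\F|=O_k(n^{k-3})=o(|\F_3|)$ for large $n$, which contradicts the hypothesis.

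The second step is to extract the $\J_i$ structure from the alignment. If $\gamma(\F)=0$ then $\F\subseteq \St_1=\J_0$. Otherwise, let $E^{*}:=\bigcap_{B\in\B}B$ and $J^{*}:=\{1\}\cup\bigcup_{B\in\B}(B\setminus E^{*})$. The step-one bound, reapplied on the level of common intersections, forces $E^{*}$ to have size exactly $k-1$ and $J^{*}$ to have size $i+1$ for some $i\le k-1$, with $E^{*}\cap J^{*}=\varnothing$. The intersecting condition between $\A$ and $\B$ then channels every $A\in\A$ into one of the three bundles of Definition~\ref{def1}: $E^{*}\subset A$ and $A\cap J^{*}\ne\varnothing$; $J^{*}\subset A$; or $1\in A$ and $A\cap E^{*}\ne\varnothing$. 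This shows $\F\subseteq\J_i(J^{*},E^{*})$ with $i=|J^{*}|-1\in\{1,\ldots,k-1\}$; the degenerate case $|J^{*}|=n-k+1$ forces $E^{*}=[n]\setminus J^{*}$ and yields the remaining parameter $i=n-k$.

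The main obstacle is the sharp diversity bound in the first step, because one must rule out \emph{hybrid} configurations in which $\B$ splits into several clusters, each pointing towards a different candidate $(J,E)$ template; such configurations would satisfy a strictly weaker alignment and need not lie in a single $\J_i$. They are excluded by running the intersecting constraint across clusters and comparing the cumulative loss in $|\A|$ to the slack $|\F|-|\F_3|$, which is sharpest precisely when $|\F|$ is just above $|\F_3|$. A secondary point is checking that the boundary parameter $i=n-k$ really occurs among the extremal families and is not ruled out by a tighter count.
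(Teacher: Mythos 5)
The paper does not prove this statement: it is quoted as Theorem~\ref{Kup} directly from Kostochka--Mubayi and Kupavskii, so there is no internal argument to compare with and your sketch has to stand on its own. As it stands it is a plan rather than a proof, and the two decisive steps are asserted rather than established. First, the claimed trade-off ``$|\A|+f(|\B|)\le|\F_3|$'' does not follow from iterating the single-set bound $|\A|\le\binom{n-1}{k-1}-\binom{n-k-1}{k-1}$: when the sets of $\B$ are aligned (all containing a common $(k-1)$-set), the losses do not accumulate at all, and this is exactly the configuration the theorem must isolate. Second, the core structural claim --- that $|\F|>|\F_3|$ forces $E^{*}=\bigcap_{B\in\B}B$ to have size exactly $k-1$, i.e.\ that ``hybrid'' clusterings of $\B$ are impossible --- is the entire content of the theorem; in the cited papers this is where essentially all the work lies (careful case analysis on the covering number $\tau(\F)$, on $|B_1\cap B_2|$ for $B_1,B_2\in\B$, and exact counting of the sets of $\St_1$ killed by each configuration). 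Saying it follows by ``reapplying the step-one bound on the level of common intersections'' and ``comparing the cumulative loss to the slack'' names the difficulty without resolving it, and you acknowledge as much in your final paragraph.

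There is also a quantitative mismatch with the stated range. The theorem is claimed for all $5\le k<\frac{n}{2}$, but your argument repeatedly appeals to asymptotics in $n$: the case $\tau(\F)\ge3$ is dismissed because $O_k(n^{k-3})=o(|\F_3|)$ ``for large $n$'', and the cluster-exclusion step compares losses to the slack $|\F|-|\F_3|$ only up to lower-order terms. Since $|\F_3|=\Theta_k(n^{k-2})$, such comparisons are valid only for $n\ge n_0(k)$; that recovers the Kostochka--Mubayi version of the result, whereas the extension to the full range $n>2k$ is precisely Kupavskii's contribution and requires exact inequalities valid down to $n=2k+1$. A further small point: your dichotomy ``$|J^{*}|\le k$ or $|J^{*}|=n-k+1$'' omits the intermediate sizes $k+1\le|J^{*}|\le n-k$; these are harmless because $\J_m\subseteq\J_{n-k}$ (with the same $E$) once $m\ge k$, but the argument should say so rather than force $J^{*}$ into the two extreme shapes.
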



 \subsection{  $G$-free subgraphs  of Kneser graphs and $(s,t)$-union intersecting families}

 Let $n\geq 2k$. The {\it Kneser graph $\KG_{n,k}$} is a graph whose vertex set is ${[n]\choose k}$ 
where  two vertices are adjacent if their corresponding sets are disjoint.  
From another point of view, the  Erd{\H o}s-Ko-Rado theorem~\cite{EKR61} determines
the  maximum independent sets of Kneser graphs.
 Recalling the fact that  an independent set in a graph $G$ is a subset of vertices containing no subgraph isomorphic to $K_2$, the following question was asked in \cite{MA}. 
 
 ``Given a graph $G$, how large a family $\F\subseteq {[n]\choose k}$ must be chosen to guarantee that $\KG_{n,k}[\F]$ has some subgraph isomorphic to $G$? What is the structure of the largest subset $\F\subseteq {[n]\choose k}$ for which   $\KG_{n,k}[\F]$ has no subgraph isomorphic to $G$?"

 This problem has  already been investigated for some special cases.
 In particular, if  $G=K_2$, the answer is the Erd{\H o}s-Ko-Rado theorem and
  if   $G=K_{1,t}$ or $G=K_{s,t}$, the question has been studied in~\cite{MA,MR3022158} and \cite{MA,MR3386026}, respectively. 

 If $G=K_{r+1}$, the question is equivalent to the famous Erd{\H o}s matching conjecture~\cite{Erdos65}  where  it has been studied 
extensively in the literature  (e.g.~\cite{ErdGal59,Frankl2017,LucMie14, BolDayErd76,Erdos65,FranLucMie12,HuaLohSud12}). As  one of the strongest results  in this regard, Frankl~\cite{Frankl13}  has confirmed the Erd{\H o}s matching conjecture for $n\geq (2r+1)k-r$.

Also, this  problem can be  considered  as a vertex Tur{\'a}n problem as follows. Given a host graph $H$ (which is the Kneser graph in our case) 
and a forbidden graph $G$, what is  the size and structure of the largest set $U\subseteq V(H)$ such that the induced subgraph $H[U ]$ is $G$-free? 

In\cite{MA}, Alishahi and the   author determined  the size and  structure of a family $\F$ on $[n]$ with maximum size such that the induced 
subgraph $\KG_{n,k}[\F]$
is $G$-free provided that $n$ is sufficiently large.  

In the sequel, a subgraph $H$ of a given graph $G$ is called a {\it special} subgraph  if removing its vertices from $G$ reduces the chromatic number by one.

 \begin{alphtheorem}\label{thm:MA}{\rm \cite{MA}}
Let $k\geq 2$ be a fixed positive integer and $G$ be a fixed graph  for which $\chi(G)$ is the chromatic number and
 $\eta(G)$ is the minimum possible size of a color class of $G$ over all possible proper
$\chi(G)$-colorings of $G$. 
There exists a threshold $N(G,k)$ such that 
for any $n\geq N(G,k)$ and for any  $\F\subseteq {[n]\choose k}$, if $\KG_{n,k}[\F]$ has no subgraph  isomorphic to $G$, then 
$$|\F|\leq {n\choose k}-{n-\chi(G)+1\choose k}+\eta(G)-1.$$ 
Moreover, equality holds if and only if there is a $(\chi(G)-1)$-set $L\subseteq [n]$ such that 
$$|\F\setminus (\bigcup_{i\in L}\St_i) |= \eta(G)-1$$
 and $\KG_{n,k}[\F\setminus (\bigcup_{i\in L}\St_i)]$ has no subgraph isomorphic to  a special subgraph of $G$. 
\end{alphtheorem}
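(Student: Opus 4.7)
The plan is to split the proof into a construction/lower-bound argument (which verifies $G$-freeness of the extremal family and the sufficiency of the equality characterization) and a matching upper-bound argument proved via a cover lemma; the equality case then follows by tracing both steps with equalities throughout.

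\textbf{The construction is $G$-free.} Let $\F=\bigcup_{i\in L}\St_i\cup\F'$ with $|L|=\chi(G)-1$ and $\KG_{n,k}[\F']$ containing no subgraph isomorphic to a special subgraph of $G$. I would argue by contradiction: suppose $G$ embeds in $\KG_{n,k}[\F]$ and split the image of $V(G)$ as $V_0\sqcup V_1$ according to whether each image vertex lies in $\F'$ or in $\bigcup_{i\in L}\St_i$. Since each $\St_i$ is independent in $\KG_{n,k}$, the partition of $V_1$ by the $\chi(G)-1$ stars yields a proper $(\chi(G)-1)$-coloring of $G-V_0$; in particular $V_0\neq\varnothing$ (else $\chi(G)\le\chi(G)-1$). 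Removing vertices from $V_0$ one at a time drops the chromatic number by at most one per step, so some $V''\subseteq V_0$ satisfies $\chi(G-V'')=\chi(G)-1$; then $G[V'']$ is a special subgraph of $G$ and, by the embedding, is realized as a subgraph of $\KG_{n,k}[\F']$, contradicting the hypothesis on $\F'$.

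\textbf{Upper bound via a cover lemma.} The heart of the proof is the claim: for $n\geq N(G,k)$, every $G$-free family $\F\subseteq\binom{[n]}{k}$ admits a $(\chi(G)-1)$-set $L$ with $|\F\setminus\bigcup_{i\in L}\St_i|\leq\eta(G)-1$. Granting this, $|\F|\leq|\bigcup_{i\in L}\St_i|+(\eta(G)-1)=\binom{n}{k}-\binom{n-\chi(G)+1}{k}+\eta(G)-1$. For the lemma, I would let $L$ be the $\chi(G)-1$ highest-degree elements of $\F$; when $|\F|$ is comparable to the target, averaging forces $|\St_x\cap\F|=\Theta(n^{k-1})$ for each $x\in L$. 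Set $\F':=\F\setminus\bigcup_{i\in L}\St_i$ and suppose for contradiction that $|\F'|\geq\eta(G)$; the goal is to embed $G$ into $\KG_{n,k}[\F]$. The strategy is first to locate inside $\KG_{n,k}[\F']$ a copy of some special subgraph $H$ of $G$ on at most $|\F'|$ vertices—the natural candidate is $\eta(G)$ pairwise intersecting sets playing the role of a minimum (edgeless) color class—then to use the $(\chi(G)-1)$-colorability of $G-V(H)$ to embed each color class greedily into the corresponding star $\St_{x_j}\cap\F$. For $n$ large, the $\Theta(n^{k-1})$ available sets in each star comfortably exceed the $O(n^{k-2})$ sets forbidden by the $O(1)$ disjointness constraints imposed by the already-chosen vertices, so a greedy extension succeeds.

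\textbf{Main obstacle and equality case.} The principal obstacle is producing a special-subgraph copy in $\KG_{n,k}[\F']$ from the mere assumption $|\F'|\geq\eta(G)$. The clean case is when $\F'$ contains $\eta(G)$ pairwise intersecting sets, which directly realize a minimum-color-class special subgraph; when $\KG_{n,k}[\F']$ contains disjoint pairs, I would match the induced structure of $\F'$ to some larger special subgraph of $G$ by a case analysis combined with the structural characterization of special subgraphs (any vertex subset whose removal drops $\chi(G)$ by one has size at least $\eta(G)$, and such subsets can be built up from Kneser edges in $\F'$ via a stepwise chromatic argument). This is also where the large-$n$ hypothesis is essential, since the supply of star vertices needed in the greedy extension must survive all disjointness constraints. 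Once the cover lemma is established, equality in $|\F|\leq\binom{n}{k}-\binom{n-\chi(G)+1}{k}+\eta(G)-1$ forces $|\F'|=\eta(G)-1$ and $\F\supseteq\bigcup_{i\in L}\St_i$; combined with the construction argument, the ``no special subgraph'' condition on $\F'$ is then necessary (from $G$-freeness) and sufficient (from Part 1), completing the characterization.
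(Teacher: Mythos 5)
First, a framing remark: Theorem~\ref{thm:MA} is quoted from \cite{MA} and is not proved in this paper, so your outline can only be measured against the general method of that reference and against the paper's own proof of the closely related Theorem~\ref{last} (induction on the number of parts, peeling off near-full stars, and an Erd\H{o}s--Stone--Simonovits step for the spread-out case). Your first step is correct: the argument that the construction is $G$-free, producing $V''\subseteq V_0$ with $\chi(G-V'')=\chi(G)-1$ by removing vertices of $V_0$ one at a time, is sound, and the greedy completion of an embedding into $\chi(G)-1$ stars of size $\Theta(n^{k-1})$ is also fine for large $n$.

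The genuine gap is in the ``cover lemma'', which is the heart of the theorem. As stated (``every $G$-free family admits such an $L$'') it is false without a size hypothesis (for $G=K_2$ it would say every intersecting family is a star, contradicting Hilton--Milner), and the mechanism you offer to repair it does not work: for $|\F|$ near the target, the average element-degree is $k|\F|/n=\Theta(n^{k-2})$, so ``averaging forces $|\St_x\cap\F|=\Theta(n^{k-1})$ for each of the $\chi(G)-1$ highest-degree elements'' simply does not follow; it is exactly the possibility of a near-extremal $G$-free family with no (or too few) elements of degree $\Theta(n^{k-1})$ that has to be excluded, and your sketch contains no tool for that. What is needed is a supersaturation ingredient: if the maximum degree is not close to $\binom{n-1}{k-1}$, then $\ell(\F)$ is large, hence by a count as in Lemma~\ref{lem:1} the graph $\KG_{n,k}[\F]$ has $\Omega(|\F|^2)$ edges (more precisely, roughly a $(1-\frac{1}{\chi(G)-1})$-fraction of all pairs once fewer than $\chi(G)-1$ stars are large), and then K\H{o}v\'ari--S\'os--Tur\'an or Erd\H{o}s--Stone--Simonovits yields a complete $\chi(G)$-partite subgraph with large parts, hence a copy of $G$; alternatively one argues by induction on $\chi(G)$, removing one near-full star at a time, as in the proof of Theorem~\ref{last}. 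Without such a step the upper bound is not established. Conversely, the obstacle you single out as principal is not an obstacle at all: a minimum colour class $C$ satisfies $\chi(G-C)=\chi(G)-1$, so the edgeless graph on $\eta(G)$ vertices is itself a special subgraph of $G$, and any $\eta(G)$ sets of $\F'$ can serve as the images of $C$ regardless of how they intersect; no case analysis on the disjointness structure of $\F'$ is needed there. In short, the easy parts of your plan are right, but the quantitative core --- forcing $\chi(G)-1$ large stars or else a copy of $G$ --- is missing.
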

 
  Let $s$ and $t$ be two positive integers such that $t\geq s$. A family of $k$-sets $\F$ on $[n]$ is said to be an {\it $(s,t)$-union intersecting family}
   if   for any subfamily $\{A_1,A_2,\ldots,A_{s+t}\}$ of $\F$, 
$$\left(\bigcup\limits_{i=1}^s A_i\right)\cap\left(\bigcup\limits_{i=1}^tA_{s+i}\right)\neq \varnothing.$$ 
It is  straight forward to see that a family $\F$ is an $(s,t)$-union intersecting family if and only if  $\KG_{n,k}[\F]$ is $K_{s,t}$-free.
As a generalization of the Erd{\H o}s-Ko-Rado theorem in \cite{MR3386026} Katona and Nagy showed that for sufficiently large $n$, any $(s,t)$-union intersecting family has cardinality at most 
${n-1\choose k-1}+s-1$. Alishahi and the  author improved this result,  and moreover,   characterized  the extremal cases  in~\cite{MA}.
Also, in \cite{MA} an asymptotic   Hilton-Milner-type stability theorem  was proved for an $(s,t)$-union intersecting family on  [n]. 
 More recently, an explicit  extension of this result
 is proved by Grebner, Methuku, Nagy, Patk{\'o}s, and Vizer \cite{GR}. 
They show  that for $2\leq s\leq t$, the size of an $(s,t)$-union intersecting family on [n], which is not isomorphic to a subfamily of 
$$ \St_1\cup\{F_j|\, 1\leq j\leq s-1,\,\,1\not\in F_j\}$$  for some $F_1,\ldots, F_{s-1}$,
 is at most ${n-1\choose k-1}-{n-sk-1\choose k-1}+s+t-1$ and characterize the largest one. In fact,
they prove that a Hilton-Milner-type theorem  for an $(s,t)$-union intersecting family 
is true when $t\geq s\geq2 $ and $n$ is sufficiently large.  

 Note that the first largest $(s,t)$-union intersecting family is the union of  the star $\St_1$ 
and $s-1$ other $k$-sets. 
 For $i\geq 2$, we say  $\F$ is the $i$th largest  $(s,t)$-union intersecting family, if  $\F$  is  
   a maximal $(s,t)$-union intersecting subfamily of $[n]\choose k$  and is not contained in the $j$th largest $(s,t)$-union intersecting family  for  every $j\leq i-1$.  Indeed, the Hilton-Milner theorem  determines the size and structure of the second $(1,1)$-union intersecting family. Also, Han and Kohayakawa in~\cite{HK} characterize the size and structure of the third $(1,1)$-union intersecting family.
  For sufficiently large $n$, Kostochka and Mubayi in~\cite{KoMu} and Kupavskii in \cite{Kup} find the size and structure of the $i$th $(1,1)$-union intersecting family when $i\leq k+1$. In this regard, for sufficiently large $n$, Grebner  et al. in \cite{GR} determine the size and structure of the second largest $(s,t)$-union intersecting family when $t\geq s\geq 2$. Motivated by the mentioned results, one may naturally ask the following questions.
  
  
   For a family $\F$ and an integer $r\geq 2$, let $\ell_r(\F)$ denote the minimum number $m$ such that by removing
 $m$ sets from $\F$, the resulting family has no $r$  pairwise disjoint sets.  For simplicity of notation, let $\ell(\F)\isdef\ell_2(\F)$.
  \begin{question}
      What are the size and   structure of the $i$th largest $(s,t)$-union intersecting family? 
         \end{question}
  \begin{question}
     What are the size and   structure of the  largest $(s,t)$-union intersecting family with $\ell(\F)\geq s+\beta$? 
   \end{question}

It is worth  mentioning that each family $\F$ with  $\ell(\F)=s-1$ is $(s,t)$-union intersecting and the largest  $(s,t)$-union intersecting
 family 
  $$\F\isdef\St_1\cup\{A_i|\, 1\leq i\leq s-1,\, 1\notin A_i\}$$
 has $\ell(\F)=s-1$.  
Grebner  et al. in \cite{GR}, as their main result, determine the size and structure of the  largest $(s,t)$-union intersecting family with $\ell(\F)\geq s$, when $t\geq s\geq 2$ and $n$ is sufficiently large. 
 By using the Hilton-Milner theorem and their result, one can verify that the second largest $(s,t)$-union intersecting family must have 
 $\ell(F)\geq s$. In fact, the next theorem determines the second largest $(s,t)$-union intersecting family. 
 
 \begin{alphtheorem}\label{thm:GR}{\rm\cite{GR}}
For any $2\leq s\leq t$ and $k$ there exists $N=N(s,t,k)$ such that if $n\geq N$ and $\F$ is a family with $\ell(\F)\geq s$ and $\KG_{n,k}[\F]$ is $K_{s,t}$-free, then we have $$|\F|\leq {n-1\choose k-1}-{n-sk-1\choose k-1}+s+t-1.$$ Moreover, equality holds if and only if $\F$ is isomorphic to some $\F_{s,t}$ which  is defined as follows,
$$\F_{s,t}\isdef\{A:1\in A, A\cap[2,sk+1]\neq\varnothing\}\cup\{A_1,\dots,A_{s}\}\cup\{F_1,\dots,F_{t-1}\}$$
where $A_i\isdef[(i-1)k+2,ik+1]$ for each $1\leq i\leq s$,  and for each $j\leq t-1$, we have $1\in F_j$ and $F_j\cap[2,sk+1]=\varnothing$.
\end{alphtheorem}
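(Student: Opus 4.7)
The strategy splits into verifying that $\F_{s,t}$ attains the bound and then establishing the upper bound with its uniqueness characterization.

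For the verification of $\F_{s,t}$, counting is direct: the portion $\{A:1\in A,\,A\cap[2,sk+1]\neq\varnothing\}$ has size $\binom{n-1}{k-1}-\binom{n-sk-1}{k-1}$, and the $A_i$'s and $F_j$'s add $s+(t-1)$ further elements. The $K_{s,t}$-free condition holds because any two sets containing $1$ intersect, so $s$ pairwise disjoint members of $\F_{s,t}$ must all avoid $1$ and hence be exactly $\{A_1,\ldots,A_s\}$; only $t-1$ sets (the $F_j$'s) are then disjoint from $\bigcup_i A_i=[2,sk+1]$, ruling out a $K_{s,t}$. The condition $\ell(\F_{s,t})\geq s$ follows since removing fewer than $s$ sets leaves some $A_i$, and that $A_i$ remains disjoint from the other $A_j$'s and all $F_j$'s, so the resulting family is still non-intersecting.

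For the upper bound, let $\F$ satisfy the hypotheses with $|\F|\geq\binom{n-1}{k-1}-\binom{n-sk-1}{k-1}+s+t-1$. The first step is to produce $s$ pairwise disjoint sets in $\F$. Let $M(\F)$ denote the maximum matching size in the disjointness graph. If $M(\F)<s$, the $K_{s,t}$-freeness becomes vacuous and only $\ell(\F)\geq s$ is active; I would rule this case out by combining the assumed large $|\F|$ with a kernel/sunflower structure for families with no $s$ pairwise disjoint sets and then using the vertex-cover constraint $\ell\geq s$ against the near-star profile of $\F$ to force a contradiction for $n\geq N(s,t,k)$. Assuming $A_1,\ldots,A_s\in\F$ are pairwise disjoint and writing $T=\bigcup_i A_i$ with $|T|=sk$, the $K_{s,t}$-freeness immediately caps the number of sets in $\F$ disjoint from $T$ at $t-1$. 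Every remaining set meets $T$, and pigeonhole on $T$ combined with the size hypothesis yields an $x\in T$ with $|\F\cap\St_x|$ within $O(n^{k-2})$ of $\binom{n-1}{k-1}$; I relabel $x=1$.

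Setting $\mathcal{H}=\F\setminus\St_1$, the remaining task is to show $|\mathcal{H}|\leq s+t-1$ with equality iff $\F\cong\F_{s,t}$. I would split $\mathcal{H}=\mathcal{H}_{\mathrm{far}}\sqcup\mathcal{H}_{\mathrm{near}}$ according to whether a set avoids or meets $T$. The $K_{s,t}$-freeness applied to $A_1,\ldots,A_s$ directly gives $|\mathcal{H}_{\mathrm{far}}|\leq t-1$. For $\mathcal{H}_{\mathrm{near}}$, I would apply $K_{s,t}$-freeness with $A_1,\ldots,A_s$ on one side and a carefully chosen $t$-subfamily of $\F\cap\St_1$ on the other, leveraging the large size of $\F\cap\St_1$ via a Bollob\'as-type set-pair inequality to conclude $|\mathcal{H}_{\mathrm{near}}|\leq s$. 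Equality then forces the $s$ sets in $\mathcal{H}_{\mathrm{near}}$ to partition $[2,sk+1]$ (i.e., to be the $A_i$'s), the $t-1$ sets of $\mathcal{H}_{\mathrm{far}}$ to contain $1$ (otherwise they would extend into a $K_{s,t}$ with the many sets through $1$), and $\F\cap\St_1$ to be exactly the $k$-sets through $1$ meeting $[2,sk+1]$, which is precisely $\F_{s,t}$.

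The main obstacle I expect is the reduction step $M(\F)\geq s$: Frankl's sharp bound for families without an $s$-matching is of order $(s-1)\binom{n-1}{k-1}$, which exceeds the target, so size alone does not yield the contradiction. Resolving this requires exploiting $\ell(\F)\geq s$ together with the $K_{s,t}$-free condition, likely by first performing a shifting reduction (verifying that the relevant shift preserves both conditions) and then arguing structurally that a shifted family with $M<s$ and $\ell\geq s$ must be substantially smaller than the target whenever $n\geq N(s,t,k)$.
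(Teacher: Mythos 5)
Your proposal has a genuine gap, and it sits exactly where you locate your ``main obstacle'': the reduction to a near-star structure is the whole content of the theorem, and you do not supply it. Note first that the paper does not need your reduction to an $s$-matching at all: Theorem~\ref{thm:GR} is obtained as the case $\beta=0$, $s\ge 2$ of Theorem~\ref{HMnew3}, whose proof runs a trichotomy on $\ell(\F)$ rather than on the matching number. When $\ell(\F)=s$, one removes $s$ sets $A_1,\dots,A_s$ so that the remainder $\F'$ is intersecting; since $|\F'|$ exceeds the Hilton--Milner (indeed the Han--Kohayakawa) threshold, $\F'$ is essentially a star $\St_1$ with $1\notin\bigcup A_i$, and Lemma~\ref{B:lem}(b) bounds $|\St_1(A_1,\dots,A_s:s)|$ with equality forcing $|T(A_1,\dots,A_s:s)|=sk$, i.e.\ forcing the $A_i$ to be pairwise disjoint as a \emph{conclusion}; when $s+1\le\ell(\F)\le M^{1-1/(3s)}$ the same lemma applied to $s+1$ removed sets produces a deficit of order $n^{k-2}$ that swallows the at most $M^{1-1/(3s)}$ extra sets; and when $\ell(\F)>M^{1-1/(3s)}$, Lemma~\ref{lem:1} gives $\Omega(M^{2-2/(3s)})$ disjoint pairs and Theorem~\ref{bip} yields a $K_{s,t}$. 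Your substitute for this machinery is ``a kernel/sunflower structure'' plus an unverified shifting step; shifting is particularly problematic here because it need not preserve the \emph{lower-bound} hypothesis $\ell(\F)\ge s$ (shifting makes families more intersecting), and once the shifted family may have $\ell<s$ your contradiction framework says nothing about the original family.

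Several intermediate claims are also wrong or circular as stated. A $K_{s,t}$ in $\KG_{n,k}$ requires only cross-disjointness, not pairwise disjointness on either side, so for $s\ge 3$ the condition is \emph{not} vacuous when $\F$ has no $s$-matching; the same misconception weakens your verification that $\F_{s,t}$ is $K_{s,t}$-free (the correct argument is that at most one side of a putative $K_{s,t}$ can contain sets through $1$, forcing that side to be $\{A_1,\dots,A_s\}$ or, if $t=s$, forcing the other side into the $t-1$ sets $F_j$). Quantitatively, your pigeonhole step cannot produce $x$ with $|\F\cap\St_x|$ within $O(n^{k-2})$ of ${n-1\choose k-1}$: the target size ${n-1\choose k-1}-{n-sk-1\choose k-1}+s+t-1$ is itself only $\Theta(n^{k-2})$, pigeonhole over the $sk$ elements of $T$ gives only a constant fraction of that, and the element it finds lies in $T$, whereas in the extremal family the true center $1$ lies \emph{outside} $T$ and $\F\setminus\St_x$ is huge for every $x\in T$. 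Finally, the key bound $|\mathcal{H}_{\mathrm{near}}|\le s$ is asserted rather than proved, and the sketch is circular: it invokes the largeness of $\F\cap\St_1$, which is only known once $\F\setminus\St_1$ has been bounded. In the paper this is exactly what the Hilton--Milner/Han--Kohayakawa input plus the case analysis on $\ell(\F)$ delivers, and no Bollob\'as set-pair inequality is needed for this theorem (the paper uses the Frankl--Kalai inequality only in the $(1,t)$ setting of Theorem~\ref{HMnew2}, to bound $\ell(\F)$).
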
  
  Motivated by the mentioned results and questions,
in this paper,  we try to determine 
  the structure and size of an $(s,t)$-union intersecting family with maximum size when $\ell(\F)\geq s+\beta$ and $n$ is sufficiently large. 

 To state our main results, we  need  the following  definitions.
\begin{definition}\label{def:1}
 Let $n,k, s,$ and $\beta$ be fixed nonnegative integers. 
 Let $A_1,\ldots,A_{s+\beta}$ be $s+\beta$ pairwise distinct $k$-sets on $[n]$ such that $1\not\in\cup_{i=1}^{s+\beta}A_i$.
 Define $\St_1(A_1,\ldots,A_{s+\beta}:s)$  as   the largest subfamily of $\St_1$ such that each $A\in\St_1(A_1,\ldots,A_{s+\beta}:s)$ is disjoint from at most $s-1$ of $A_i$s. Also, define
 $$T(A_1,\ldots,A_{s+\beta}:s)\isdef\{x|\, \text{there exist distinct}\,\,  i_1,i_2,\ldots,i_{\beta+1}\, \text{such that}\, \, x\in \cap_{j=1}^{\beta+1}A_{i_j}\}.$$

 \end{definition}
 Note that when $\beta=0$, we have  $T(A_1,\ldots,A_{s}:s)=\cup_{i=1}^{s}A_i$
and $\St_1(A_1,\ldots,A_s:s)$ is equal to $\St_1\setminus\{A:A\cap(\cup_{i=1}^{s}A_i)=\varnothing\}$. Also,
 when $s=1$  the family  $\St_1(A_1,\ldots,A_{1+\beta}:1)$ is equal to $\St_1\setminus\{A|\,  A\cap A_i=\varnothing\,\, \text{for some }\, 1\leq i\leq \beta+1\}$ and
  $T(A_1,\ldots,A_{1+\beta}:1)=\cap_{i=1}^{1+\beta}A_i$.
  \begin{definition}
 Let $k, s,$ and $\beta$ be fixed nonnegative integers. 
   If $\lfloor{(s+{\beta})k\over {\beta}+1}\rfloor>k$, define  $\hat{\beta}\isdef\hat{\beta}(k,s,\beta)$  as the  largest positive integer such that
 $\lfloor{(s+{\beta})k\over {\beta}+1}\rfloor=\lfloor{(s+\hat{\beta})k\over \hat{\beta}+1}\rfloor$; else 
   if $\lfloor{(s+{\beta})k\over {\beta}+1}\rfloor=k$, define  $\hat{\beta}\isdef\beta$. 
   \end{definition}
 Now, we are in a position to state our first result.
\begin{theorem}\label{HMnew3}
 Let $k\geq 3, t\geq s\geq 1$, and $\beta$ be fixed  nonnegative integers and $n=n(s,t,k,\beta)$ be sufficiently large. 
Let $\F$ be an $(s,t)$-union intersecting family  such that $\ell(\F)\geq s+\beta$.
 Then  $$|\F|\leq {n-1\choose k-1}-{n-\lfloor{(s+{\beta})k\over {\beta}+1}\rfloor-1\choose k-1}+s+t+\hat{\beta}-1.$$
Equality holds if and only if  there exist  pairwise distinct $k$-sets
$A_1,\ldots,A_{s+\hat{\beta}}$ and  $F_1,\ldots, F_{t-1}$
such that 
\begin{enumerate}
\item $1\notin\bigcup\limits_{i=1}^{s+\hat{\beta}} A_i$,
\item\label{ii} $|T(A_1, \ldots,A_{s+\hat{\beta}}:s)|=\lfloor{(s+\beta)k\over \beta+1}\rfloor$, 
\item for each $i\leq t-1$, $F_i\in\St_1\setminus\St_1(A_1, \ldots,A_{s+\hat{\beta}}:s),$ and
\item\label{iv} the family $\{A_1, \ldots,A_{s+\hat{\beta}},F_1,\ldots, F_{t-1}\}$ is an $(s,t)$-union intersecting family
\end{enumerate}
and $\F$ is  isomorphic to $\St_1(A_1,\ldots,A_{s+\hat{\beta}}:s)\cup\{A_1,\ldots,A_{s+\hat{\beta}}\}\cup\{F_1,\ldots, F_{t-1}\}.$
\end{theorem}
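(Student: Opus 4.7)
The plan is to adapt the standard star-plus-exceptions strategy (used in Hilton--Milner, Han--Kohayakawa, Kostochka--Mubayi, Kupavskii, Alishahi--Taherkhani, Grebner et al.) to the hypothesis $\ell(\F)\ge s+\beta$, handling the arithmetic behind $\lfloor(s+\beta)k/(\beta+1)\rfloor$ and $\hat{\beta}$. Since the claimed upper bound has leading term $\binom{n-1}{k-1}$ and $\KG_{n,k}[\F]$ is $K_{s,t}$-free, Theorem~\ref{thm:MA} applied with $G=K_{s,t}$ (where $\chi(G)=2$ and $\eta(G)=s$) delivers, for $n$ sufficiently large, a vertex $v$ with $|\F\setminus\St_v|\le C(s,t,k)$. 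Relabel so $v=1$, and write $\G=\F\cap\St_1$ and $\mathcal{D}=\F\setminus\St_1=\{A_1,\dots,A_m\}$. Since $\G$ is intersecting, deleting $\mathcal{D}$ from $\F$ produces an intersecting family, so $m\ge\ell(\F)\ge s+\beta$.

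The $K_{s,t}$-free property yields the key counting constraint: for every $s$-subset $S\subseteq[m]$ and $U_S:=\bigcup_{i\in S}A_i$, at most $t-1$ sets of $\G$ can be disjoint from $U_S$, else $\{A_i\}_{i\in S}$ together with $t$ such sets form a $K_{s,t}$ in $\KG_{n,k}[\F]$. With $T=T(A_1,\dots,A_m:s)$, any $G\in\St_1$ meeting $T$ contains an element lying in at least $m-s+1$ of the $A_i$, hence $G\in\St_1(A_1,\dots,A_m:s)$. Writing $\G''=\{G\in\G:G\cap T=\varnothing\}$, this gives
\[ |\G|\le \binom{n-1}{k-1}-\binom{n-|T|-1}{k-1}+|\G''|. \]
A double count of $\sum_i|A_i|=mk$ against the multiplicity function $x\mapsto|\{i:x\in A_i\}|$ — each element of $T$ contributing at least $m-s+1$ — yields $|T|\le\lfloor mk/(m-s+1)\rfloor$. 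By the definition of $\hat{\beta}$, this floor equals $\lfloor(s+\beta)k/(\beta+1)\rfloor$ throughout $m\in[s+\beta,s+\hat{\beta}]$ and is strictly smaller for $m>s+\hat{\beta}$. Combined with $|\F|=|\G|+m$, once we have $|\G''|\le t-1$, the bound
\[ |\F|\le\binom{n-1}{k-1}-\binom{n-|T|-1}{k-1}+m+t-1 \]
is maximized at $m=s+\hat{\beta}$ and $|T|=\lfloor(s+\beta)k/(\beta+1)\rfloor$ (larger $m$ forces strictly smaller $|T|$, and for large $n$ the binomial growth of $\binom{n-|T|-1}{k-1}$ swamps the linear gain in $m$), yielding the claimed upper bound.

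The crux is therefore to upgrade the naive union-bound estimate $|\G''|\le\binom{m}{s}(t-1)$ (at most $t-1$ bad sets per $s$-subset) to the sharp $|\G''|\le t-1$. I would argue that every $G\in\G''$ must avoid a common critical union $U_{S_0}$ for a fixed $S_0\subseteq[m]$: if two bad sets avoided distinct $U_{S_1},U_{S_2}$, combining them with carefully chosen $A_i$'s would either produce a forbidden $K_{s,t}$ inside $\F$ or contradict the tightness of the double count defining $|T|$, using that each element of $G\in\G''\setminus\{1\}$ lies in at most $m-s$ of the $A_i$. Once all bad sets share $S_0$, the $K_{s,t}$-constraint applied to $\{A_i\}_{i\in S_0}$ and $\G''$ immediately gives $|\G''|\le t-1$. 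Tracing the equalities backward then forces $m=s+\hat{\beta}$, $|T|=\lfloor(s+\beta)k/(\beta+1)\rfloor$, $\G''=\{F_1,\dots,F_{t-1}\}$, and $\G\setminus\G''=\St_1(A_1,\dots,A_{s+\hat{\beta}}:s)$, recovering conditions (1)--(4). This rigidity step $|\G''|\le t-1$ is the principal new ingredient beyond the $\beta=0$ case in Theorem~\ref{thm:GR} of Grebner et al.
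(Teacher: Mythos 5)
Your opening reduction is where the argument breaks. Theorem~\ref{thm:MA} is an exact extremal statement (a maximum size together with a characterization of equality); it is not a stability theorem and does not ``deliver a vertex $v$ with $|\F\setminus\St_v|\le C(s,t,k)$''. Worse, no such appeal could work here on size grounds: the bound you are proving is only of order $n^{k-2}$ (the two leading binomials cancel), far below the EKR-type maximum ${n-1\choose k-1}+s-1$, and there exist $K_{s,t}$-free families of comparable size that are $\Omega(n^{k-2})$ away from every star, e.g.\ the intersecting family $\{A:|A\cap\{1,2,3\}|\ge 2\}$. Because this reduction is unavailable, your proof has no mechanism for the regime where $m=|\F\setminus\St_1|$ (equivalently $\ell(\F)$) is large, nor for the possibility that the intersecting part of $\F$ is not contained in a star. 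The paper handles exactly these issues by a trichotomy on $\ell(\F)$: for $\ell(\F)\le M^{1-1/(3s)}$ it applies the Hilton--Milner and Han--Kohayakawa theorems to a largest intersecting subfamily chosen with $\Delta$ maximal (this $\Delta$-maximality trick is also what forces $1\notin\bigcup A_i$), and for $\ell(\F)>M^{1-1/(3s)}$ it finds a $K_{s,t}$ via Lemma~\ref{lem:1} and Theorem~\ref{bip}. Your absorption claim ``larger $m$ forces strictly smaller $|T|$'' is also false when $\lfloor(s+\beta)k/(\beta+1)\rfloor=k$ (for instance whenever $s=1$): the floor then equals $k$ for every $m$, and $\hat{\beta}=\beta$ only by definition; the paper disposes of this through the special subcases in which $|T|=k$ forces $s=1$ and then $T=\cap_i A_i$ of size $k$ contradicts the distinctness of the $A_i$.

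The step you yourself identify as the crux is aimed at a false intermediate statement. With $\G''=\{G\in\G:G\cap T=\varnothing\}$, the sharp bound $|\G''|\le t-1$ does not hold in general: a set avoiding $T$ need not be disjoint from any union of $s$ of the $A_i$'s (it can meet many $A_i$'s through elements of multiplicity at most $m-s$), so no $K_{s,t}$-constraint applies to it, and there can be $\Theta(n^{k-3})$ such sets. The true dichotomy, which is the content of Lemma~\ref{B:lem}(b) and the heart of the paper's proof, is: if $|T|=\lfloor(s+\beta')k/(\beta'+1)\rfloor$ then $T=A_{i_1}\cup\cdots\cup A_{i_s}$ for some $s$ indices, whence every $T$-avoiding member of $\F$ is disjoint from that union and there are at most $t-1$ of them; if instead $|T|$ is strictly smaller, one only gets $|\G''|\le{m\choose s}(t-1)+O(n^{k-3})$ and must use the slack ${n-|T|-1\choose k-2}=\Omega(n^{k-2})$ to force strict inequality. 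Your sketched rigidity argument (all bad sets avoid a common $U_{S_0}$) is a hand-wave at precisely this point and cannot be correct as stated in the non-extremal case; to repair it you would have to prove the structural identity $T=A_{i_1}\cup\cdots\cup A_{i_s}$ in the extremal case, i.e.\ the missing lemma, and then also redo the equality analysis (including the $\F'\not\subseteq\St_1$ subcase with $t=2$) that your outline skips.
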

It is worth mentioning that  Theorem~\ref{thm:GR} follows from Theorem~\ref{HMnew3} by choosing   $\beta=0$ and $s\geq 2$.
By applying the previous theorem and using  some properties of $T(A_1, \ldots,A_{s+{\beta}}:s)$,
we can find out  the $j$th largest $(s,t)$-union intersecting family for some $j$'s.
We provide a more detailed analysis in our remarks proceeding 
 the proof of Theorem~\ref{HMnew3}.

 Note that perhaps for some $k,s,$ and $\beta$ there exist no distinct pairwise $A_1, \ldots,A_{s+\beta}$ satisfying Condition~(\ref{ii}) in the previous theorem. For example,  one may choose $k=3, s=3$, and $\beta=5$. Thus,
we have $\lfloor{(s+\beta)k\over \beta+1}\rfloor=4$. Since $\cup_{i=1}^{8}A_i=T(A_1, \ldots,A_8:3)$, if there exist  $A_1, \ldots,A_8$ for which 
$|T(A_1, \ldots,A_8:3)|=4$, then at least  two of $A_i$'s must be identical, which is not possible.
Therefore, for some $k,s,$ and $\beta$ there do not exist  any  $A_1,  \ldots,A_{s+\beta}$ such that 
 $|T(A_1, \ldots,A_{s+{\beta}}:s)|= \lfloor{(s+\beta)k\over \beta+1}\rfloor$. Consequently, as we will show in the proof of Theorem~\ref{HMnew3},
 each $(s,t)$-union intersecting  family $\F$ is of size less than
 ${n-1\choose k-1}-{n-\lfloor{(s+{\beta})k\over {\beta}+1}\rfloor-1\choose k-1}$ showing that 
 if  $\ell(\F)\geq s+\beta$, then $|\F|$ is at most
 ${n-1\choose k-1}-{n-|T(A_1, \ldots,A_{s+{\beta}}:s)|-1\choose k-1}+O(n^{k-3})$.  

 When  $\F$ is a $(1,t)$-union intersecting family of ${[n]\choose k}$ (or $\KG_{n,k}[\F]$ is a $K_{1,t}$-free subgraph of $\KG_{n,k}$) 
 it is proved that  every $(1,t)$-union intersecting family
with at least ${n-1\choose k-1}-{n-k-1\choose k-1}+(t-1){2k-1\choose k-1}+t$ members is contained 
in  some star $\St_i$ for sufficiently large $n$~\cite{MA}.
Moreover, it  is posed as a conjecture in the same refrence that 
for sufficiently large $n$ one can replace the term  $(t-1){2k-1\choose k-1}$ by $1$. Note that this conjecture is an extension of  the Hilton-Milner  theorem.
Also, if the statement of Theorem~\ref{thm:GR} is true for $s=1$, then the conjecture holds, however, the  condition $s\ge 2$ is necessary in the proof of  Theorem~\ref{thm:GR} presented in~\cite{GR}.  This conjecture is one of our  motivations for this study, in which we show that the conjecture follows from Theorem~\ref{HMnew3} choosing $s=1$
and $\beta=0$. For further reference we state this fact in the following corollary.

\begin{corollary}\label{cor1}
Let $k\geq 3$ and $t$ be positive integers  and $n=n(k,t)$ is sufficiently  large. 
Any $(1,t)$-union intersecting family $\F\subseteq {[n]\choose k}$, which is not contained in any star, has  cardinality  at most $${n-1\choose k-1}-{n-k-1\choose k-1}+t.$$
Equality holds if and only if $\F$ is isomorphic to
$\J_{1}^{1,t}\isdef\St_1(A_1:1)\cup \{A_1\}\cup\{F_1,\ldots, F_{t-1}\}$ where  $F_i\in\St_1\setminus\St_1(A_1:1)$ for each $i\leq t-1$.
\end{corollary}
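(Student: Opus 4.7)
The plan is to deduce the corollary directly from Theorem~\ref{HMnew3} by specializing the parameters to $s=1$ and $\beta=0$, together with a separate appeal to the Hilton-Milner theorem to handle the case in which $\F$ happens to be intersecting. As a first step I would verify the arithmetic: with $s=1$ and $\beta=0$ one has $\lfloor(s+\beta)k/(\beta+1)\rfloor=k$, so by the definition of $\hat{\beta}$ we get $\hat{\beta}=\beta=0$, and the bound of Theorem~\ref{HMnew3} collapses to
$${n-1\choose k-1}-{n-k-1\choose k-1}+s+t+\hat{\beta}-1={n-1\choose k-1}-{n-k-1\choose k-1}+t,$$
which is exactly the bound claimed in the corollary. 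The extremal description also simplifies: with only a single set $A_1$ satisfying $1\notin A_1$, we have $T(A_1:1)=A_1$ of size $k$, so Condition~(\ref{ii}) of Theorem~\ref{HMnew3} is automatic, and the extremal structure reduces to $\St_1(A_1:1)\cup\{A_1\}\cup\{F_1,\ldots,F_{t-1}\}$, which matches the definition of $\J_{1}^{1,t}$.

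I would then split into two cases depending on whether $\F$ is intersecting. If $\F$ is not intersecting, then some two members of $\F$ are disjoint, so $\ell(\F)\geq 1=s+\beta$ and Theorem~\ref{HMnew3} applies verbatim, giving both the cardinality bound and the classification of extremizers as $\J_{1}^{1,t}$. If instead $\F$ is intersecting, then the hypothesis that $\F$ is not contained in any star makes $\F$ a nontrivial intersecting family, and the Hilton-Milner theorem yields
$$|\F|\leq{n-1\choose k-1}-{n-k-1\choose k-1}+1,$$
which is strictly less than the claimed bound for $t\geq 2$ and equal to it for $t=1$. Since a $(1,1)$-union intersecting family is simply an intersecting family, the case $t=1$ of the corollary coincides with Hilton-Milner, and the extremal family $\HM$ agrees with $\J_{1}^{1,1}=\St_1(A_1:1)\cup\{A_1\}$.

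Combining the two cases produces the desired upper bound, and equality forces $\F$ to be non-intersecting whenever $t\geq 2$ (so it falls under Theorem~\ref{HMnew3}) or to be the Hilton-Milner family when $t=1$; in either situation $\F$ is isomorphic to $\J_{1}^{1,t}$. I do not anticipate any genuine obstacle in this deduction: essentially all of the work has already been done in Theorem~\ref{HMnew3}, and the remaining task is the bookkeeping required to confirm that the specialized parameters reproduce the statement of the corollary and to note that the intersecting regime is absorbed by Hilton-Milner.
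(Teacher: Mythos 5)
Your proposal is correct and takes essentially the same route as the paper: the corollary is stated there exactly as the specialization $s=1$, $\beta=0$ of Theorem~\ref{HMnew3}, and your arithmetic ($\lfloor (s+\beta)k/(\beta+1)\rfloor=k$, hence $\hat{\beta}=0$) reproduces both the bound and the extremal family $\J_{1}^{1,t}$. Your additional case split invoking the Hilton--Milner theorem for intersecting nontrivial families is precisely the bookkeeping the paper leaves implicit (Theorem~\ref{HMnew3} needs $\ell(\F)\geq 1$, while ``not contained in a star'' also allows intersecting families); the only caveat is the degenerate case $k=3$, $t=1$, where Hilton--Milner equality also admits $\HM'$, an imprecision inherited from the corollary's statement rather than a flaw in your argument.
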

Note that if $F\in\St_1\setminus\St_1(A_1:1)$, it means that $1\in F$ and $F\cap A_1=\varnothing$. Also, note that $\J_1^{1,1}$ is isomorphic to $\HM$ and $\J_1$.

Concerning our next result when $s=1, t\geq 1$, and $\beta\leq k-3$,
 motivated by Theorems~\ref{HK} and \ref{Kup} and the mentioned conjecture, 
  we  determine the maximum size and structure of  a $(1,t)$-union intersecting family
$\F$ with $\ell(\F)\geq 1+\beta$. Note that when $s=1$ and $\beta\geq 1$, Theorem~\ref{HMnew3} does not give a sharp bound for maximum size of $(1,t)$-union intersecting families. This result leads us to determine  the $i$th largest  $(1,t)$-union intersecting families
 where $i\leq k-2$.

Before stating the next result we need to  introduce the following construction.
\begin{definition}\label{def2}
Let $i\leq k-1$ be a nonnegative integer. For any $(i+1)$-set $J=\{1,x_1,\ldots,x_i\}$ of  $[n]$	 and any $(k-1)$-set $E\subset [n]\setminus J$.
 Let $A_1,\ldots,A_i$ be $i$ $k$-subsets on $[n]\setminus\{1\}$ such that
 $\cap_{j=1}^i A_j=E$ and $A_j\setminus E=\{x_j\}$ for each $j\leq i$ define $\J^{1,t}_{i}$ as follows
 $$\J^{1,t}_{i}\isdef \St_1(A_1,\ldots,A_i:1)\cup\{A_1,\ldots,A_i\}
 \cup\B_1\cup\cdots\cup \B_i,$$
where   $\B_j$, for $j\leq i$,  defined as follows
 $$\B_j\isdef\{B_{p}: p\in[t-1], B_p\cap E=\varnothing,\, J\setminus B_p=\{x_j\}\}$$
\end{definition}
 Notice that $\J^{1,1}_{i}$    isomorphic to $\J_i$ and 
 $\J_{i}=\St_1(A_1,\ldots,A_i:1)\cup\{A_1,\ldots,A_i\}$.
Since  $\B_j$'s in the definition of $\J^{1,t}_{i}$ are pairwise disjoint. Therefore, $|\J^{1,t}_{i}|=|\J_{i}|+i(t-1)$.  

For $s=1$ we can state a strong improvement of Corollary~\ref{cor1} and Theorem~\ref{HMnew3} as follows.
 \begin{theorem}\label{HMnew2}
 Let $k\geq 5, t\geq 1$, and $\gamma=1+\beta\leq k-2$ be nonnegative integers and  $n=n(k,t,\gamma)$ is sufficiently large. 
Let $\F$ be a $(1,t)$-union intersecting family with $\ell(\F)\geq \gamma$. Then 
$$|\F|\leq {n-1\choose k-1}-{n-k\choose k-1}+{n-k-\gamma\choose k-\gamma-1}+\gamma t.$$
Equality holds if and only if $\F$ is  isomorphic to 
 $\J_{\gamma}^{1,t}$.
  \end{theorem}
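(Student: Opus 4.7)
The plan is to decompose $\F$ around a center vertex $1$, apply degree counting using the $(1,t)$-union intersecting hypothesis (which forces $\KG_{n,k}[\F]$ to have maximum degree at most $t-1$), and combine this with a combinatorial bound on the star-portion of the family. By a stability argument, in the extremal regime $|\F|$ close to $|\J_\gamma^{1,t}|$ there exists a vertex (WLOG $1$) such that $r := |\F \setminus \St_1|$ is bounded in terms of $k, t, \gamma$; the argument combines Theorem~\ref{Kup} applied to $\F \setminus C$ (with $C$ a minimum vertex cover of $\KG_{n,k}[\F]$) with the observation that once $\F \setminus C \subseteq \St_1$, one has $\F_0 := \F \setminus \St_1 \subseteq C$ and $|\F_0| \leq \ell(\F)$. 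Fix such a $1$ and write $\F_1 := \F \cap \St_1$, $\F_0 = \{A_1, \ldots, A_r\}$. Since $\F_1$ is intersecting, $\F_0$ is a vertex cover of $\KG_{n,k}[\F]$, hence $r \geq \ell(\F) \geq \gamma$. Setting $q_j := |\{B \in \F_1 : B \cap A_j = \varnothing\}|$ and $e_j := |\{i \neq j : A_i \cap A_j = \varnothing\}|$, the degree bound at $A_j$ gives $q_j + e_j \leq t-1$, so $\sum_j q_j \leq (t-1)r$. Since every $B \in \F_1 \setminus \St_1(A_1, \ldots, A_r : 1)$ is disjoint from at least one $A_j$, one obtains $|\F_1 \setminus \St_1(A_1, \ldots, A_r : 1)| \leq (t-1)r$, and therefore
\[
|\F| \leq |\St_1(A_1, \ldots, A_r : 1)| + tr.
\]

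The heart of the proof is the combinatorial inequality
\[
|\St_1(A_1, \ldots, A_r : 1)| \leq {n-1\choose k-1} - {n-k\choose k-1} + {n-k-r\choose k-r-1},
\]
valid for any $r$ pairwise distinct $k$-subsets $A_1, \ldots, A_r$ of $[2,n]$, with equality if and only if $E := \bigcap_{j=1}^r A_j$ has $|E| = k-1$ (so each $A_j = E \cup \{x_j\}$ with distinct $x_j$'s). The count splits into (i) $B \in \St_1$ meeting $E$, which automatically meets every $A_j$ and contributes exactly ${n-1\choose k-1} - {n-1-|E|\choose k-1}$; and (ii) $B \in \St_1$ missing $E$, which must form a $(k-1)$-transversal of the residuals $\{A_j \setminus E\}_j$ inside $[2,n] \setminus E$. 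A case analysis on $|E|$, combined with a shifting/symmetrization argument on the residual hypergraph, then shows that the total is maximized precisely when the residuals are distinct singletons (forcing $|E| = k-1$), in which case case (ii) contributes ${n-k-r\choose k-r-1}$.

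Setting $f(r) := {n-1\choose k-1} - {n-k\choose k-1} + {n-k-r\choose k-r-1} + tr$, Pascal's identity yields $f(r+1) - f(r) = t - {n-k-r-1\choose k-r-1}$, which is negative for $r \leq k-2$ and $n$ sufficiently large, so $f$ is strictly decreasing on $[\gamma, k-2]$; for $r$ in the remaining bounded range from the stability step, the gap $f(\gamma) - f(k-1)$ grows as $n^{k-\gamma-1}$ while $f(r) - f(k-1)$ is $O_{k,t}(1)$, so $f(r) < f(\gamma)$. Hence $|\F| \leq f(\gamma) = |\J_\gamma^{1,t}|$. For equality, tightness throughout forces $r = \gamma$, $|\bigcap_j A_j| = k-1$ (nested $A_j$'s), $e_j = 0$ and $q_j = t-1$ for each $j$ (the $A_j$'s pairwise intersect and each hosts exactly $t-1$ members of $\F_1$ disjoint from it), the sets $\{B \in \F_1 : B \cap A_j = \varnothing\}$ pairwise disjoint (so each such $B$ misses exactly one $A_j$), and $\F_1 \supseteq \St_1(A_1, \ldots, A_\gamma : 1)$; unpacking these constraints identifies $\F$ with $\J_\gamma^{1,t}$ as in Definition~\ref{def2}. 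The main obstacles are the combinatorial inequality above for $|E| < k-1$ (where the larger case-(i) contribution must be offset by a sharper transversal bound, obtained via shifting) and the stability step ensuring $r$ is bounded by a constant in the extremal regime, which allows the decreasing range of $f$ to control the problem.
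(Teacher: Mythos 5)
Your core counting is sound and closely parallels the paper: the inequality you state for $|\St_1(A_1,\ldots,A_r:1)|$ is exactly Lemma~\ref{B:lem}~(c), and the bound $|\F|\leq |\St_1(A_1,\ldots,A_r:1)|+tr$ followed by comparing $f(r)$ with $f(\gamma)$ mirrors the paper's main case. The genuine gap is in your stability step: you never bound $\ell(\F)$, and everything hinges on that. First, to apply Theorem~\ref{Kup} to $\F\setminus C$ you need $|\F\setminus C|=|\F|-\ell(\F)>|\F_3|$, which already requires control of $\ell(\F)$. Second, your conclusion that $r=|\F\setminus\St_1|$ is ``bounded in terms of $k,t,\gamma$'' rests on $|\F\setminus\St_1|\leq\ell(\F)$ (or $\leq\ell(\F)+i$ after recentering), which is useless without a bound on $\ell(\F)$: when $\gamma=k-2$ one has ${n-k-\gamma\choose k-\gamma-1}=n-2k+2$, so for $r\geq k-1$ you need $r\lesssim n/t$ to get $f(r)<f(\gamma)$, and the only bounds available from what you wrote (maximum degree $t-1$, or Lemma~\ref{lem:1} giving $\ell(\F)=O(n^{(k-2)/2})$) are far too weak for $k\geq 5$. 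The paper's first and essential step is precisely a constant bound $\ell(\F)\leq{2k-1\choose k-1}(t-1)$, proved via a greedy pairing argument and the Frankl--Kalai skew set-pair theorem (Theorem~\ref{bl}); some ingredient of this kind is indispensable and is absent from your proposal.

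A second problem is that Theorem~\ref{Kup} does not give $\F\setminus C\subseteq\St_1$: it gives $\F\setminus C\subseteq\J_i$ for some $i\in\{0,1,\ldots,k-1,n-k\}$, and $\J_i$ contains $i$ members avoiding the centre (as many as $n-k$ when $i=n-k$), so the step ``once $\F\setminus C\subseteq\St_1$, one has $\F\setminus\St_1\subseteq C$'' is not available in general. The $\J_{n-k}$ outcome, where $\F$ may contain many sets through a fixed $(k-1)$-set $E$ missing the centre, makes $r$ potentially of order $n$ and must be excluded by a separate argument exploiting $K_{1,t}$-freeness; note that $f(k-2)-|\J_{n-k}|=(t-1)(k-2)$ is only a constant, so crude counting does not dispose of it even once $\ell(\F)=O_{k,t}(1)$ is known. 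The paper sidesteps recentering entirely: it fixes a largest intersecting subfamily $\F^*$ with $\Delta(\F^*)$ maximal, and when $\F^*$ is not a star it performs a swap of the non-star members $B_j$ against sets $A_i\ni 1$ (using the maximality of $\Delta(\F^*)$) and then applies Lemma~\ref{B:lem}~(c) to force $|\F|<N_\gamma$, a contradiction. So your skeleton and extremal analysis are right, but the two load-bearing steps---the constant bound on $\ell(\F)$ and the treatment of non-star intersecting subfamilies, including the $\J_{n-k}$ case---are missing, and without them the comparison $f(r)<f(\gamma)$ is not justified.
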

  It can be seen that the next corollary is a direct consequence of  Theorem~\ref{HMnew2}.
   Notice that  we   need to apply  Theorem~\ref{Kup} to prove it.
    
\begin{corollary}
Let $n$, $k\geq 5$,  $t\geq1$, and $\gamma\leq k-2$  be nonnegative integers such that  $n=n(k,t,\gamma)$ is sufficiently large. 
Let $\F$ be a $(1,t)$-union intersecting family  that is not isomorphic to a subfamily of    $\J_i\cup\B$ where 
$\B\subseteq \St_1\setminus\J_i$ and  $0\leq i\leq \gamma-1$.
Then $$|\F|\leq {n-1\choose k-1}-{n-k\choose k-1}+{n-k-\gamma\choose k-\gamma-1}+\gamma t.$$
Equality holds if and only if $\F$ is isomorphic to some $\J_{\gamma}^{1,t}$.
\end{corollary}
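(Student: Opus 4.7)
My strategy is to reduce the corollary to Theorem~\ref{HMnew2}. It suffices to prove that under the hypothesis and for $n$ sufficiently large, $\ell(\F)\ge\gamma$; Theorem~\ref{HMnew2} then gives the bound ${n-1\choose k-1}-{n-k\choose k-1}+{n-k-\gamma\choose k-\gamma-1}+\gamma t$ and characterises equality as $\F\cong\J_\gamma^{1,t}$. That $\J_\gamma^{1,t}$ itself satisfies the hypothesis is easy to verify: its $\gamma$ sets outside $\St_1$ share the common $(k-1)$-set $E$ and form a pencil of size exactly $\gamma$, so $\J_\gamma^{1,t}$ cannot be embedded into any $\J_i\cup\B$ with $i\le\gamma-1$ and $\B\subseteq\St_1\setminus\J_i$.

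To prove $\ell(\F)\ge\gamma$ by contrapositive, assume $m=\ell(\F)\le\gamma-1$ and write $\F=\F^*\cup\{A_1,\ldots,A_m\}$ with $\F^*$ intersecting. Apply Theorem~\ref{Kup} to $\F^*$: either (i) $|\F^*|\le|\F_3|$, or (ii) $\F^*\subseteq\J_j$ for some $j\in\{0,1,\ldots,k-1,n-k\}$. In case~(ii), I would partition $\{A_1,\ldots,A_m\}$ into the sets lying in $\J_j$, the sets in $\St_1\setminus\J_j$, and the sets outside $\J_j\cup\St_1$. If the third class is empty and $j\le\gamma-1$, then $\F\subseteq\J_j\cup\B$ with $\B\subseteq\St_1\setminus\J_j$, directly contradicting the hypothesis. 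Otherwise, a witness $A^*\in\F\setminus(\J_j\cup\St_1)$ together with the $K_{1,t}$-freeness of $\KG_{n,k}[\F]$ limits the sets of $\F\cap\St_1$ disjoint from $A^*$ to at most $t-1$; combined with $\F^*\subseteq\J_j$ and the pencil structure $(E,x_1,\ldots,x_j)$ of $\J_j$, this either embeds $\F$ into a larger $\J_{j'}\cup\B'$ with $j'\le\gamma-1$ (contradicting the hypothesis) or yields a pencil-union estimate on $|\F\cap\St_1|$ that strictly beats the corollary's bound. In case~(i), the hypothesis forbids $\F\subseteq\St_v$ for every $v$, so the Alishahi--Taherkhani stability bound recalled in the introduction confines $|\F|$ below an $n^{k-2}$-order threshold; refining this by using the $K_{1,t}$-freeness on the extras $A_i$ pushes $|\F|$ strictly below the corollary's bound.

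The main obstacle is case~(ii) when $j\notin\{0,\ldots,\gamma-1\}$, especially the $j=n-k$ regime where $\J_{n-k}$ has a noticeably different pencil structure. There the extras $A_i$ may complete, extend, or spoil the pencil of $\J_j$ in subtle ways, so producing either a hypothesis-violating embedding into $\J_{j'}\cup\B'$ with $j'\le\gamma-1$ or a sharp pencil-union bound requires carefully tracking how each $A_i$ and each witness $A^*$ outside $\J_j\cup\St_1$ intersects the common $(k-1)$-set $E$ of $\J_j$ and one another, while respecting $K_{1,t}$-freeness for $n$ large.
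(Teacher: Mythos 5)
Your overall route --- reduce to Theorem~\ref{HMnew2} by analysing what happens when $\ell(\F)\leq\gamma-1$, invoking Theorem~\ref{Kup} on a large intersecting subfamily --- is exactly the derivation the paper intends (the paper states this corollary without proof, noting only that it follows from Theorem~\ref{HMnew2} with the help of Theorem~\ref{Kup}). But as written your argument has real gaps, located precisely where you defer the work. First, your opening reduction ``it suffices to prove $\ell(\F)\geq\gamma$'' is not available: the hypothesis of the corollary does not force $\ell(\F)\geq\gamma$. For example $\J_{n-k}$, or any subfamily of $\J_j$ with $\gamma\leq j\leq k-1$, is intersecting (so $\ell=0$), yet for large $n$ every element of $[n]$ is avoided by more than $\gamma-1$ of its members, so it is not isomorphic to a subfamily of any $\J_i\cup\B$ with $i\leq\gamma-1$. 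The correct reduction is the one your contrapositive actually attempts: if $\ell(\F)=m\leq\gamma-1$, show either that $\F$ embeds in some $\J_i\cup\B$ with $i\leq\gamma-1$ (contradicting the hypothesis) or that $|\F|$ does not exceed the stated bound, with equality only as claimed.

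Second, the two unfinished cases are where the content lies. In your case (i), the Alishahi--Taherkhani star-stability threshold recalled in the introduction, namely ${n-1\choose k-1}-{n-k-1\choose k-1}+(t-1){2k-1\choose k-1}+t$, exceeds the corollary's bound by roughly ${n-k-1\choose k-2}=\Omega(n^{k-2})$, so ``confining $|\F|$ below an $n^{k-2}$-order threshold'' proves nothing; what is needed (and easy) is $|\F|\leq|\F_3|+(\gamma-1)$, together with the fact that $|\F_3|$ falls short of ${n-1\choose k-1}-{n-k\choose k-1}$ by a term of order $n^{k-2}$. In your case (ii) with $j\geq\gamma$ --- the part you yourself flag as the main obstacle and do not resolve --- one must actually compare $|\J_j|+m$ with $N_\gamma+\gamma t$, where $N_\gamma={n-1\choose k-1}-{n-k\choose k-1}+{n-k-\gamma\choose k-\gamma-1}$. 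For $\gamma<j\leq k-1$ this is comfortable, but for $j=n-k$ it is genuinely tight: $|\J_{n-k}|={n-1\choose k-1}-{n-k\choose k-1}+n-k$, which for $\gamma=k-2$ and $t=1$ equals the corollary's bound, while $\J_{n-k}$ satisfies the corollary's hypothesis and is not isomorphic to $\J_{k-2}^{1,1}$. So the obstruction you name is not a technicality: for $t\geq2$ the inequality $|\J_{n-k}|+(\gamma-1)<N_\gamma+\gamma t$ does go through and closes the case, but this computation has to be carried out (and the boundary case $t=1$, $\gamma=k-2$ shows the uniqueness statement needs a separate treatment of the $\J_{n-k}$ configuration). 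Likewise, your handling of removed sets lying outside $\J_j\cup\St_1$ is only a sketch (``either embeds \ldots or yields a pencil-union estimate''); as it stands the proposal does not constitute a proof.
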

Note that if we choose $\B=\B_1\cup\ldots\cup\B_i$, where $\B_j$'s come from Definition~\ref{def2}, then $\J_{i}\cup\B=\J_{i}^{1,t}$.
\subsection{ Some stability results for the Erd{\H o}s matching conjecture and its generalization} 
 The  Erd{\H o}s matching conjecture is one of the famous open problems in extremal set theory. It states that for $n\geq (r+1)k$,
 the size of the largest subset $\F\subseteq {[n]\choose k}$ for which $\KG_{n,k}[\F]$ has no copy of $K_{r+1}$ is 
 $\max\{{(r+1)k-1\choose k},{n\choose k}-{n-r\choose k}\}$. 
In recent years, this conjecture has received considerable attention. It has been already proved that the conjecture is true for $k\leq 3$  (see~\cite{ErdGal59,Frankl2017,LucMie14}). Also, improving the earlier results
 in~\cite{BolDayErd76,Erdos65,FranLucMie12,HuaLohSud12}, Frankl~\cite{Frankl13} confirmed the conjecture for $n\geq (2r+1)k-r$; moreover, he determined the structure of  the extremal  cases in this range. Frankl and Kupavskii~\cite{FK} proved a Hilton-Milner-type stability theorem for the Erd{\H o}s matching conjecture for $n\geq  (2+o_{r}(1))(r+1)k $ as 
 a significant improvement of a classical result due to Bollob{\' a}s, Daykin and Erd{\H o}s~\cite{BolDayErd76}.

Hereafter, we will focus on complete multipartite  graphs $K_{s_1, s_2,\cdots, s_{r+1}}$ as a forbidden subgraph. We show that  
the previous results for $(s,t)$-union intersecting family  can be extended to $K_{s_1, s_2,\cdots, s_{r+1}}$-free subgraph of Kneser graphs instead of  $K_{s,t}$-free subgraphs of Kneser graphs as  nontrivial extensions of the Erd{\H o}s matching conjecture. 
 In this regard,
Grebner et. al. show that a generalization of Theorem~\ref{thm:GR} holds when $\KG_{n,k}[\F]$ is $K_{s_1, s_2,\cdots, s_{r+1}}$-free 
when $s_1\geq \cdots\geq s_{r+1}\geq 2$. They determine the size and structure of the second largest family $\F$ on $[n]$ such that $\KG_{n,k}[\F]$
 is $K_{s_1,s_2,\ldots,s_{r+1}}$-free,  where $s_{r+1}\geq 2$ for sufficiently large $n$. 
 Before stating their result, we need an extension of the construction of Definition~\ref{def:1}.

 \begin{definition}\label{dfs}
 Let $n,k, s,$ and $\beta$ be fixed positive integers. Let $A_1,\ldots,A_{s+\beta}$ be $s+\beta$ pairwise distinct $k$-sets on $[n]$ such that $[r]\bigcap(\cup_{i=1}^{s+\beta}A_i)=\varnothing$.
 Define $\St_r^{[r-1]}(A_1,\ldots,A_{s+\beta}:s)$ as the largest subfamily of $\St_r$ such that 
 each $A\in\St_r^{[r-1]}(A_1,\ldots,A_{s+\beta}:s)$ is disjoint from $[r-1]$ and at most $s-1$ of $A_i$'s.
 \end{definition}
Note that the family $\St_r(A_1,\ldots,A_{s+\beta}:s)$  in Definition~\ref{def:1} is a special case of  Definition~\ref{dfs} when $r=1$.

 \begin{alphtheorem}\label{thm:GR2}{\rm\cite{GR}}
 For any $k\geq 2$ and integers
 $  s_1\geq s_2\cdots\geq s_{r+1}\geq 2$ there exists $N=N(s_1, s_2,\dots, s_{r+1},k)$ such that if $n\geq N$ and $\F$ is a family with $\ell_{r+1}(F)\geq s_{r+1}$ 
 and $\KG_{n,k}[F]$ is $K_{s_1, s_2,\cdots, s_{r+1}}$-free, then we have 
 $$|\F|\leq {n\choose k}-{n-r\choose k}-{n-s_{r+1}k-r\choose k-1}+s_{r}+s_{r+1}-1.$$
Moreover, equality holds if and only if $\F$ is isomorphic to $\St_r^{[r-1]}(A_1,\ldots,A_s:s)\cup\{A_1,\ldots,A_s\}\cup\{F_1,\ldots,F_{t-1}\}$.
\end{alphtheorem}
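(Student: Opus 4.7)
The plan is to bootstrap from the rough structure given by Theorem~\ref{thm:MA} up to the precise one claimed here, using the hypothesis $\ell_{r+1}(\F) \ge s_{r+1}$ to rule out ``cheap'' configurations; the case $r=1$ is exactly Theorem~\ref{thm:GR} and serves both as a model and as a base case. I would first invoke Theorem~\ref{thm:MA} with $G = K_{s_1,\ldots,s_{r+1}}$ (so $\chi(G)=r+1$, $\eta(G)=s_{r+1}$), which already gives $|\F|\le \binom{n}{k}-\binom{n-r}{k}+s_{r+1}-1$ and, when strengthened to a stability statement in the standard delta-system/sunflower way, forces the existence of an $r$-set $L \subseteq [n]$ --- take $L=[r]$ after relabelling --- such that $\mathcal{E} := \F \setminus \bigcup_{i \in L} \St_i$ has size bounded by a constant $C=C(s_1,\ldots,s_{r+1},k)$, with every member of $\mathcal{E}$ disjoint from $L$.

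Next I would exploit $\ell_{r+1}(\F) \ge s_{r+1}$: any $(r+1)$-matching in $\F$ must include at least one set from $\mathcal{E}$, since $r+1$ pairwise disjoint sets inside $\bigcup_{i\in L}\St_i$ would violate pigeonhole on the $r$ centers. Thus $s_{r+1} \le \ell_{r+1}(\F) \le |\mathcal{E}|$, and for $n$ large one can extract pairwise disjoint sets $A_1,\ldots,A_{s_{r+1}} \in \mathcal{E}$, all disjoint from $L$.

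The crucial step is a forbidden-subgraph argument that links $\mathcal{E}$ with $\F \cap \St_r^{[r-1]}$. I would show that among the sets of $\F \cap \St_r^{[r-1]}$ disjoint from every $A_j$, there can be at most $s_r - 1$: if instead $s_r$ such sets $B_1,\ldots,B_{s_r}$ existed, one could extend $(V_{r+1},V_r)=(\{A_j\},\{B_\ell\})$ to a forbidden copy of $K_{s_1,\ldots,s_{r+1}}$ by greedily selecting, for each $i<r$, $s_i$ sets in $\F \cap \St_i^{L\setminus\{i\}}$ disjoint from the previously chosen sets --- possible because $|\F \cap \St_i|$ has order $n^{k-1}$ while the ``forbidden region'' has only constantly many elements. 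Consequently $\F\cap\St_r^{[r-1]} \subseteq \St_r^{[r-1]}(A_1,\ldots,A_{s_{r+1}}:s_{r+1}) \cup \{F_1,\ldots,F_{s_r-1}\}$, and a parallel forbidden-subgraph argument applied to hypothetical ``extra'' elements of $\mathcal{E}$ yields $|\mathcal{E}| \le s_{r+1}+s_r-1$. Inclusion-exclusion then gives
\[
|\F| \le \binom{n}{k}-\binom{n-r}{k}-\binom{n-s_{r+1}k-r}{k-1}+s_{r+1}+s_r-1,
\]
and tracing equality through the above steps forces the $A_j$'s to be pairwise disjoint in $[n]\setminus L$, $\mathcal{E}$ to consist precisely of them plus exactly $s_r-1$ sets in $\St_r^{[r-1]}$ disjoint from all $A_j$'s, and $\F \cap \St_r^{[r-1]}$ to coincide with $\St_r^{[r-1]}(A_1,\ldots,A_{s_{r+1}}:s_{r+1})$, matching the described extremal structure.

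The main obstacle will be extracting the sharp constant $s_r + s_{r+1} - 1$ in the bound on $|\mathcal{E}|$. One must rule out subtle configurations in which sets of $\mathcal{E}$ share many elements, or interact with $\St_r^{[r-1]}$ in unexpected ways, while simultaneously preserving enough freedom in $\F \cap \St_i$ for $i<r$ to supply the disjoint representatives needed in the forbidden-subgraph construction. The entire argument must balance the use of $|\F|$ being large (to produce these auxiliary representatives) against the requirement of forbidding precisely the right patterns in $\F \cap \St_r^{[r-1]}$, and doing so without losing the sharp lower-order constants that distinguish this bound from the weaker one coming out of Theorem~\ref{thm:MA}.
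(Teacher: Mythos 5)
You should first note that the paper does not prove this statement at all: Theorem~\ref{thm:GR2} is quoted from \cite{GR}, and the closest in-paper analogue is Theorem~\ref{last}, which is proved by induction on $r$ (if every star degree $\max_i|\F\cap\St_i|$ is small, count disjoint pairs and invoke Erd\H{o}s--Stone--Simonovits to force a copy of $K_{s_1,\ldots,s_{r+1}}$; otherwise peel off an essentially full star and induct down to the $r=1$ case, Theorem~\ref{HMnew3}, which rests on the Hilton--Milner/Han--Kohayakawa/Kupavskii structure theorems, Lemma~\ref{B:lem}, the K\H{o}v\'ari--S\'os--Tur\'an bound and Lemma~\ref{lem:1}). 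Your architecture is different (stability-first, no induction on $r$), but it has a genuine gap at its very first step: the assertion that a $K_{s_1,\ldots,s_{r+1}}$-free family of the relevant size admits an $r$-set $L$ with $|\mathcal{E}|=|\F\setminus\bigcup_{i\in L}\St_i|\le C$ is not a ``standard delta-system/sunflower'' strengthening of Theorem~\ref{thm:MA}. The target bound sits a full $\Theta(n^{k-1})$ below the bound of Theorem~\ref{thm:MA} (indeed $\binom{n-s_{r+1}k-r}{k-1}\sim\binom{n-1}{k-1}$), so a family at that size can a priori deviate from any union of $r$ stars by $\Theta(n^{k-2})$ sets, not by a constant; pinning the deviation down to a constant is essentially the content of the theorem (in the paper this is exactly where the Hilton--Milner-type machinery, Lemma~\ref{B:lem} and the $\ell$-hypothesis enter), and you cannot take it as a black box.

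There are also problems in the endgame even granting that step. From $\ell_{r+1}(\F)\ge s_{r+1}$ you only get $|\mathcal{E}|\ge s_{r+1}$; the claim that one can ``extract pairwise disjoint'' $A_1,\ldots,A_{s_{r+1}}\in\mathcal{E}$ has no justification ($\mathcal{E}$ is a constant-size family and may be intersecting; disjointness of the $A_i$'s only emerges in the equality analysis, via $|T(A_1,\ldots,A_{s}:s)|=s_{r+1}k$), though fortunately the bipartite pattern $(\{A_j\},\{B_\ell\})$ does not actually need it. More seriously, your two separate estimates --- at most $s_r-1$ sets of $\F\cap\St_r^{[r-1]}$ disjoint from all $A_j$'s, and $|\mathcal{E}|\le s_{r+1}+s_r-1$ --- add up to the claimed bound plus an extra $s_r-1$, so the constant does not close. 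To recover $+s_r+s_{r+1}-1$ one must prove a trade-off: if $|\mathcal{E}|>s_{r+1}$, then the ``tolerant star'' $\St_r^{[r-1]}(A_1,\ldots,A_{s_{r+1}+\beta}:s_{r+1})$ shrinks by $\Omega(n^{k-2})$ unless the intersection pattern is degenerate --- this is precisely what Lemma~\ref{B:lem}(b) and the $\beta,\hat{\beta}$ case analysis in the proof of Theorem~\ref{HMnew3} provide, and it is absent from your sketch. (A smaller fixable point: the greedy extension through the stars $\St_i$, $i<r$, needs each such star to carry $\Omega(n^{k-1})$ sets of $\F$, which requires a relabelling argument since one of the $r$ stars may legitimately be almost empty.)
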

We are able to prove an analog of the previous theorem by using the Erd{\H o}s-Stone-Simonovits theorem and Theorem~\ref{HMnew3}.
\begin{theorem}\label{last}
 Let $k\geq 3, s_1\geq\cdots \geq s_{r+1}\geq 1$ and $\beta$ be fixed nonnegative integers 
and $n=n(s_1,\ldots,s_{r+1},k,\beta)$ is sufficiently large. 
 Assume that  $\hat{\beta}=\hat{\beta}(k,s_{r+1},\beta)$.
 Let $\KG_{n,k}[\F]$ be  $K_{(s_1,\ldots,s_{r+1})}$-free  such that $\ell_{r+1}(\F)\geq s_{r+1}+\beta$.
 Then  
 $$|\F|\leq {n\choose k}-{n-r\choose k}-{n-\lfloor{(s_{r+1}+{\beta})k\over {\beta}+1}\rfloor-r\choose k-1}+s_{r}+s_{r+1}+\hat{\beta}-1.$$
Equality holds if and only if  there exist $s_{r+1}+\hat{\beta}$ pairwise distinct $k$-sets
$A_1,\ldots,A_{s_{r+1}+\hat{\beta}}$
such that 
 \begin{itemize}
\item $[r]\bigcap(\bigcup\limits_{i=1}^{s_{r+1}+\hat{\beta}}A_i)=\varnothing$, 
\item $|T(A_1, \ldots,A_{s_{r+1}+\hat{\beta}})|=\lfloor{(s_{r+1}+\beta)k\over \beta+1}\rfloor$, 
\item  for each $i\leq s_r-1$,
$F_i\in\St_r\setminus\St_r^{[r-1]}(A_1, \ldots,A_{s_{r+1}+\hat{\beta}}:s_{r+1})$ and $F_i\cap [r-1]=\varnothing$, and
 \item the family $\{A_1, \ldots,A_{s_{r+1}+\hat{\beta}},F_1,\ldots, F_{s_r-1}\}$ is an $(s_{r+1},s_r)$-union intersecting family and
 \end{itemize}
$\F$ is  isomorphic to 
$$\bigcup_{i=1}^{r-1}\St_i\cup\St_r^{[r-1]}(A_1, \ldots,A_{s_{r+1}+\hat{\beta}}:s_{r+1})\cup\{A_1, \ldots,A_{s_{r+1}+\hat{\beta}}\}\cup\{F_1,\ldots, F_{s_r-1}\}$$

\end{theorem}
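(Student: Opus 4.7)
The plan is to derive Theorem~\ref{last} by a reduction to Theorem~\ref{HMnew3}, via peeling off $r-1$ ``full stars''. First, I will locate an $(r-1)$-subset $L\subseteq[n]$, which after relabeling I may assume equals $[r-1]$, such that the slices $\F\cap\St_i$ for $i\in L$ are as large as possible and, in particular, dense enough to furnish matchings of any prescribed bounded size avoiding any prescribed finite collection of elements. To obtain such an $L$, I will first apply Theorem~\ref{thm:MA} to the $K_{s_1,\ldots,s_{r+1}}$-free family $\F$, which gives the weak bound $|\F|\leq{n\choose k}-{n-r\choose k}+s_{r+1}-1$ and, via the equality/stability portion of that theorem combined with a degree-sequence refinement, identifies an $(r-1)$-set of ``dominant'' coordinates; a standard accounting then shows $|\F\cap\St_i|={n-1\choose k-1}-O(n^{k-2})$ for $i\in L$, while every off-$L$ degree is $O(n^{k-2})$.

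Next, set $\F^{L}\isdef\{A\in\F : A\cap L=\varnothing\}$, viewed as a family of $k$-sets on the $(n-r+1)$-element ground set $[r,n]$. I will verify two claims. \emph{Claim A:} $\F^{L}$ is an $(s_{r+1},s_{r})$-union intersecting family on $[r,n]$. Indeed, if $\F^{L}$ contained sets $A_1,\dots,A_{s_{r+1}}$ and $B_1,\dots,B_{s_{r}}$ with $(\cup_i A_i)\cap(\cup_j B_j)=\varnothing$, then using the density of each $\F\cap \St_i$ for $i\in L$, I could greedily choose, for each $i\in[r-1]$, $s_i$ pairwise disjoint members of $\F\cap\St_i$ avoiding every previously chosen element; the union of these families with the $A_p$'s and $B_q$'s would form a copy of $K_{s_1,\ldots,s_{r+1}}$ inside $\KG_{n,k}[\F]$, contradicting the hypothesis. \emph{Claim B:} $\ell(\F^{L})\geq s_{r+1}+\beta$. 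Otherwise, some $Y\subseteq\F^{L}$ with $|Y|<s_{r+1}+\beta$ makes $\F^{L}\setminus Y$ intersecting. After removing $Y$ from $\F$, any $(r+1)$-matching would use at most one set of $\F^{L}\setminus Y$ and therefore at least $r$ sets of $\bigcup_{i\in L}\St_i$; but $|L|=r-1$ forces two of those $r$ sets to share an element of $L$, a contradiction. Hence $\ell_{r+1}(\F)\leq |Y|<s_{r+1}+\beta$, contradicting the hypothesis.

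With Claims~A and~B in place, I will then apply Theorem~\ref{HMnew3} to $\F^{L}$ on $[r,n]$ with parameters $s=s_{r+1}$, $t=s_{r}$, and $\beta$, obtaining
$$|\F^{L}|\leq{n-r\choose k-1}-{n-\lfloor(s_{r+1}+\beta)k/(\beta+1)\rfloor-r\choose k-1}+s_{r+1}+s_{r}+\hat{\beta}-1.$$
Combining this with the trivial bound $|\F\cap\bigcup_{i\in L}\St_i|\leq{n\choose k}-{n-r+1\choose k}$ yields exactly the upper bound in Theorem~\ref{last}. The equality characterization follows by unfolding the extremal case of Theorem~\ref{HMnew3} on $[r,n]$ and observing that equality forces $\F\cap\bigcup_{i\in L}\St_i=\bigcup_{i\in L}\St_i$, so $\F$ has the stated form with $L=[r-1]$ after relabeling. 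As a sanity check, Theorem~\ref{thm:GR2} is recovered from the case $\beta=0$ with $s_{r+1}\geq 2$.

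The main obstacle is Step~1: exhibiting the $(r-1)$-set $L$ and guaranteeing that each $\F\cap\St_i$ for $i\in L$ is dense enough to carry out the greedy matching extensions required in Step~2 (which demand matchings of size $s_i$, not merely nonemptiness). Theorem~\ref{thm:MA} alone produces only the weaker additive slack $s_{r+1}-1$ rather than the sharp $s_{r+1}+s_{r}+\hat{\beta}-1-{n-\lfloor(s_{r+1}+\beta)k/(\beta+1)\rfloor-r\choose k-1}$, and a separate quantitative argument, simultaneously exploiting the $K_{s_1,\ldots,s_{r+1}}$-freeness and the lower bound $\ell_{r+1}(\F)\geq s_{r+1}+\beta$, is needed to convert near-extremality into the stated degree lower bounds for $i\in L$. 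Once Step~1 is secured, Steps~2 and~3 are essentially mechanical applications of Theorem~\ref{HMnew3}.
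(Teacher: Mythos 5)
Your reduction skeleton is sound: the partition $\F=(\F\cap\bigcup_{i\in L}\St_i)\cup\F^{L}$, Claim~B, the application of Theorem~\ref{HMnew3} on the ground set $[r,n]$ with $s=s_{r+1}$, $t=s_r$, and the identity ${n\choose k}-{n-r+1\choose k}+{n-r\choose k-1}={n\choose k}-{n-r\choose k}$ do combine to the stated bound, and the equality case unfolds as you say. The genuine gap is exactly the step you flag yourself: the existence of the dominant $(r-1)$-set $L$. The route you propose for it cannot work. Theorem~\ref{thm:MA} gives the bound ${n\choose k}-{n-r\choose k}+s_{r+1}-1$ with a structural conclusion \emph{only at equality}, whereas the families relevant here have size about ${n\choose k}-{n-r\choose k}-{n-\lfloor(s_{r+1}+\beta)k/(\beta+1)\rfloor-r\choose k-1}+O(1)$, i.e.\ they fall short of that threshold by $\Theta(n^{k-1})$; the equality/stability clause of Theorem~\ref{thm:MA} is simply silent about them, and no ``degree-sequence refinement'' of it is supplied. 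Since this step is where all of the $K_{s_1,\ldots,s_{r+1}}$-freeness beyond the $r=1$ case is actually used, the proposal as written does not constitute a proof. Note also that your stated form of Step~1 is stronger than needed: the greedy completion in Claim~A only requires $|\F\cap\St_i|\geq Cn^{k-2}$ for each $i\in L$ (with $C$ depending on $k$ and the $s_j$'s), not $|\F\cap\St_i|={n-1\choose k-1}-O(n^{k-2})$, and within a part of the multipartite copy the chosen sets of $\St_i$ cannot be ``pairwise disjoint'' (they all contain $i$); they only need to be distinct and disjoint from the sets placed in the other parts.

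The missing step can be filled, but by a different mechanism than Theorem~\ref{thm:MA}: if fewer than $r-1$ coordinates had degree $\geq Cn^{k-2}$, then after deleting the few high-degree stars one is left with a subfamily of size $\Theta(n^{k-1})$ in which all degrees are $O(n^{k-2})$, so all but $o(1)$ of its pairs are disjoint, and the Erd\H{o}s--Stone--Simonovits theorem produces a copy of $K_{s_1,\ldots,s_{r+1}}$, contradicting freeness. This max-degree dichotomy plus Erd\H{o}s--Stone--Simonovits is precisely the engine of the paper's own proof, which proceeds differently from you in the large-scale organization: it inducts on $r$, peeling off one nearly full star at a time (Case 2) and handling the low-maximum-degree situation by the counting argument above (Case 1), with Theorem~\ref{HMnew3} serving as the base case $r=1$. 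Your one-shot peeling of $r-1$ stars is arguably cleaner once Step~1 is secured (it avoids re-verifying $\ell_r(\F\setminus\St_n)\geq s_{r+1}+\beta$ at each induction level), but as submitted the key structural input is asserted, not proved; you should also add the routine verification that the claimed extremal configurations are indeed $K_{s_1,\ldots,s_{r+1}}$-free with $\ell_{r+1}\geq s_{r+1}+\beta$, so that the equality characterization is an equivalence.
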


When $s_{r+1}=1$ same as Theorem~\ref{HMnew2} we are able to prove
 a stronger  result than Theorem~\ref{last} which yields a new stability result
for Erd{\H o}s matching conjecture for sufficiently large $n$. 
    \begin{definition}\label{def3}
Let $i\leq k-1$ be a nonnegative integer. For any $(i+r)$-set $J=\{1,\ldots,r,x_1,\ldots,x_i\}$ of  $[n]$	
 and any $(k-1)$-set $E\subset [n]\setminus J$. Let $A_1,\ldots,A_i$ be $i$ $k$-subsets on $[n]\setminus[r]$ such that
 $\cap_{j=1}^i A_j=E$ and $A_j\setminus E=\{x_j\}$ for each $j\leq i$  define $\J^{1,t}_{i,r}$ and $\J'^{1,t}_{i,r}$ as follows
 $$\J^{1,t}_{i,r}\isdef \St_r^{[r-1]}(A_1,\ldots,A_i:1)
 \cup\B_1\cup\cdots\cup \B_i$$
where   $\B_j$, for $j\leq i$    defined as follows,
 $$\B_j\isdef\{B_{p}: p\in[t-1], B_p\cap E=\varnothing,\, J\setminus B_p=\{1,\ldots,r-1,x_j\}\}.$$
\end{definition}
 Notice that $\J^{1,t}_{i,1}$ is isomorphic to $\J^{1,t}_{i}$. 
Now we are in a position to state a stability result related to Erd{\H o}s matching conjecture provided that $n$ is sufficiently large.
\begin{theorem}\label{last1}
 Let $k\geq 5, s_1\geq\cdots \geq s_{r}\geq 1$, and $\gamma(=1+\beta)\leq k-2$ be fixed nonnegative integers  such that $n=n(s_1,\ldots,s_{r},k,\gamma)$ is sufficiently large.
Let $\KG_{n,k}[\F]$ be  $K_{(s_1,\ldots,s_{r},1)}$-free  such that $\ell_{r+1}(\F)\geq \gamma$.
 Then  
 $$|\F|\leq {n\choose k}-{n-r\choose k}-{n-k-r+1\choose k-1}+{n-k-r-\gamma+1\choose k-\gamma-1}+\gamma t.$$

Equality holds if and only if $\F$ is  isomorphic to $\J^{1,s_r}_{\gamma,r}$
\end{theorem}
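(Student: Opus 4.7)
The proof plan is to reduce Theorem~\ref{last1} to Theorem~\ref{HMnew2} by a star-extraction step, in direct analogy with the way Theorem~\ref{last} is derived from Theorem~\ref{HMnew3}; the substantive change is the use of Theorem~\ref{HMnew2}, which provides a sharper residual bound in the $s_{r+1}=1$ case and is exactly responsible for the improvement over Theorem~\ref{last}. First I would apply Theorem~\ref{thm:MA} to $G=K_{s_1,\ldots,s_r,1}$ (so $\chi(G)=r+1$ and $\eta(G)=1$) to obtain, for $|\F|$ close to the target bound, an $r$-set $L\subseteq[n]$ with $|\F\setminus\bigcup_{i\in L}\St_i|=O(n^{k-2})$. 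The hypothesis $\ell_{r+1}(\F)\ge\gamma\ge 1$ precludes $\F$ from filling up all of $\bigcup_{i=1}^r\St_i$, so after relabeling I may assume $L=[r]$ with $\St_i\subseteq\F$ up to $O(n^{k-2})$ exceptions for each $i\in[r-1]$, while the $r$th star is mostly sacrificed and the essential action is carried by the residual family $\F':=\{F\in\F:F\cap[r-1]=\varnothing\}$ sitting in ${[r,n]\choose k}$. If a direct stability argument for this step proves delicate, one can alternatively invoke Theorem~\ref{last} with $s_{r+1}=1,\beta=0$ as a coarser upper bound to perform the same localization.

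Next I would verify two properties of $\F'$ that bring it into the scope of Theorem~\ref{HMnew2}. For $\ell(\F')\ge\gamma$: given any $S\subseteq\F'$ with $|S|\le\gamma-1$, the hypothesis $\ell_{r+1}(\F)\ge\gamma$ yields $r+1$ pairwise disjoint sets in $\F\setminus S$; at most $r-1$ of these lie in $\bigcup_{i=1}^{r-1}\St_i$ (one per star), so at least two lie in $\F'\setminus S$, producing the required disjoint pair. For $\KG_{n,k}[\F']$ being $K_{1,s_r}$-free: assume to the contrary that $A,B_1,\ldots,B_{s_r}\in\F'$ witness a $K_{1,s_r}$ with $A\cap B_j=\varnothing$ for every $j$. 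I would then greedily augment this to a $K_{s_1,\ldots,s_r,1}$ in $\KG_{n,k}[\F]$ by selecting, for $i=1,2,\ldots,r-1$ in turn, $s_i$ sets from $\F\cap\St_i$ that avoid $A\cup B_1\cup\cdots\cup B_{s_r}$ together with every previously chosen set. At each stage the forbidden element set has constant size at most $k(s_1+\cdots+s_r+1)$, so $\St_i$ contains at least $\binom{n-1-k(s_1+\cdots+s_r+1)}{k-1}$ admissible sets; this dominates the $O(n^{k-2})$ deficit $|\St_i\setminus\F|$ from Step~1 once $n$ is large, guaranteeing $\ge s_i$ admissible choices in $\F\cap\St_i$ and hence the desired $K_{s_1,\ldots,s_r,1}$, a contradiction.

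Finally I would apply Theorem~\ref{HMnew2} to $\F'$ on the ground set $[r,n]$ (of size $n-r+1$) with parameters $k$, $\gamma$, and $t=s_r$, yielding $|\F'|\le\binom{n-r}{k-1}-\binom{n-r-k+1}{k-1}+\binom{n-r-k-\gamma+1}{k-\gamma-1}+\gamma s_r$, with equality iff $\F'$ is isomorphic to $\J^{1,s_r}_\gamma$ on $[r,n]$. Combining this with $|\F\cap\bigcup_{i=1}^{r-1}\St_i|\le\binom{n}{k}-\binom{n-r+1}{k}$ and the Pascal identity $\binom{n-r+1}{k}=\binom{n-r}{k}+\binom{n-r}{k-1}$ yields the claimed bound on $|\F|$; equality forces $\bigcup_{i=1}^{r-1}\St_i\subseteq\F$ together with $\F'\cong\J^{1,s_r}_\gamma$, hence $\F\cong\J^{1,s_r}_{\gamma,r}$ as in Definition~\ref{def3}. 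The main obstacle is the first step, where the polynomially slack Alishahi--Taherkhani bound must be upgraded to the precise statement that the dominant star cover uses exactly $r-1$ stars rather than $r$: the deficit $\binom{n}{k}-\binom{n-r}{k}-|\F|$ needed to distinguish the two is of order $\binom{n-r-k+1}{k-1}$, so the stability (or bootstrap via Theorem~\ref{last}) argument must be calibrated to detect exactly this polynomial gap and attribute it to the $\ell_{r+1}$ constraint.
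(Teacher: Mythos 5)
Your reduction of Theorem~\ref{last1} to Theorem~\ref{HMnew2} is attractive, and two of its ingredients are sound: the transfer $\ell(\F')\ge\gamma$ (any $r+1$ pairwise disjoint members of $\F\setminus S$ use each element of $[r-1]$ at most once, so two of them avoid $[r-1]$), and the greedy completion of a $K_{1,s_r}$ inside $\F'$ to a $K_{s_1,\ldots,s_r,1}$ inside $\F$, which indeed works once each of the $r-1$ distinguished stars meets $\F$ in $\Omega(n^{k-1})$ sets; the final count via ${n-r+1\choose k}={n-r\choose k}+{n-r\choose k-1}$ is also correct (with $t=s_r$). The genuine gap is the localization step on which everything else rests. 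Theorem~\ref{thm:MA} is an exact extremal statement: it gives the bound ${n\choose k}-{n-r\choose k}$ (here $\eta=1$) together with a characterization of the families attaining it, and it says nothing about families below that bound. Your family has size roughly ${n\choose k}-{n-r\choose k}-{n-k-r+1\choose k-1}$, i.e.\ it falls short of the Theorem~\ref{thm:MA} threshold by $\Theta(n^{k-1})$ --- a full star's worth --- so no conclusion of the form ``there is an $r$-set $L$ with $|\F\setminus\bigcup_{i\in L}\St_i|=O(n^{k-2})$'' can be extracted from it. The fallback you suggest, Theorem~\ref{last} with $s_{r+1}=1$, $\beta=0$, has the same defect: it is again only an upper bound with an equality characterization and yields no structure for families strictly below its bound. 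Moreover, even granting a localization to an $r$-set $L$, your stronger claim that $r-1$ of those stars lie in $\F$ up to $O(n^{k-2})$ exceptions does not follow: the total deficit compatible with the size of $\F$ is about one full star and could a priori be split between two stars; only the weaker statement that at least $r-1$ of the stars carry $\Omega(n^{k-1})$ members of $\F$ follows from the size count (though that weaker statement would suffice for your greedy step). You flag this first step as the ``main obstacle'' but supply no argument for it, and supplying one is essentially the substance of the theorem.

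For comparison, the paper proves Theorem~\ref{last1} by the same induction on $r$ as Theorem~\ref{last}, with Theorem~\ref{HMnew2} serving as the base case $r=1$: at each level, either every star degree $|\F\cap\St_i|$ is small, in which case counting disjoint pairs and applying the Erd{\H o}s--Stone--Simonovits theorem produces the forbidden complete multipartite graph, or some star has large degree, in which case one may assume that star is entirely contained in $\F$, peel it off, and apply the induction hypothesis to the residual family on $[n-1]$. That peeling induction is exactly the ``calibrated stability/localization'' your plan leaves open; if you replace the appeal to Theorems~\ref{thm:MA} and~\ref{last} by this max-degree dichotomy iterated $r-1$ times, the remainder of your proposal goes through, but at that point it is essentially a repackaging of the paper's argument rather than an independent shortcut.
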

  \begin{corollary}\label{lastcor}
Let $n$, $k\geq 5$,  $s_1\geq\cdots \geq s_{r}\geq 1$, and $\gamma(=1+\beta)\leq k-2$ be nonnegative integers such that   $n=n(s_1,\ldots,s_{r},k,\gamma)$  is sufficiently large. 
Let $\F$ be a family such that  $\KG_{n,k}[\F]$ is  $K_{(s_1,\ldots,s_{r},1)}$-free  and $\F$ 
 is not isomorphic to a subfamily of    $\J^{1,1}_{i,r}\cup\B$ where 
$\B\subseteq\St_r\setminus\J^{1,1}_{i,r}$ and  $0\leq i\leq \gamma-1$.
Then 
 $$|\F|\leq {n\choose k}-{n-r\choose k}-{n-k-r+1\choose k-1}+{n-k-r-\gamma+1\choose k-\gamma-1}+\gamma t.$$
Equality holds if and only if $\F$ is  isomorphic to $\J^{1,s_r}_{\gamma,r}$
\end{corollary}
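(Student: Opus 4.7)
The plan is to derive Corollary~\ref{lastcor} from Theorem~\ref{last1} by showing that the hypothesis on $\F$ forces $\ell_{r+1}(\F)\geq\gamma$; Theorem~\ref{last1} then supplies both the stated bound and the equality characterization.

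Since the bound is trivial when $|\F|$ is small, I may assume $|\F|\geq A(\gamma)$, where $A(\gamma)$ denotes the claimed right-hand side. Arguing by contrapositive, suppose $\ell_{r+1}(\F)=j$ with $0\leq j\leq\gamma-1$; I will show that $\F$ is isomorphic to a subfamily of $\J^{1,1}_{j,r}\cup\B$ for some $\B\subseteq\St_r\setminus\J^{1,1}_{j,r}$, contradicting the hypothesis. Pick a minimum collection $C_1,\dots,C_j\in\F$ whose removal leaves $\F':=\F\setminus\{C_1,\dots,C_j\}$ with no $(r+1)$-matching. Since $|\F'|$ is of the same order as $\binom{n}{k}-\binom{n-r}{k}$ for our large $n=n(s_1,\dots,s_r,k,\gamma)$, Frankl's resolution of the Erd{\H o}s matching conjecture~\cite{Frankl13} together with the Hilton-Milner-type stability of Frankl and Kupavskii~\cite{FK} guarantees that $\F'$ admits a vertex cover of size $r$; after relabeling this cover is $[r]$, so $\F'\subseteq\bigcup_{l=1}^{r}\St_l$. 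Each $C_i$ must satisfy $C_i\cap[r]=\varnothing$, for otherwise $C_i\in\bigcup_{l=1}^{r}\St_l$ would make $\F$ itself $(r+1)$-matching-free and hence $\ell_{r+1}(\F)=0$, contradicting $j\geq 1$.

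Next, I localize $\F'\cap\St_r^{[r-1]}$: every $A$ in this set must intersect each $C_i$. Indeed, if some $A\in\F'\cap\St_r^{[r-1]}$ were disjoint from $C_i$, then choosing $s_l$ pairwise disjoint members of $\St_l\cap\F'$ for each $l<r$ (plentiful for large $n$, with disjointness from $A$, $C_i$, and one another arranged by avoiding a constant number of elements) together with $A$ and $C_i$ would assemble a copy of $K_{s_1,\dots,s_r,1}$ inside $\KG_{n,k}[\F]$, contradicting the $K_{s_1,\dots,s_r,1}$-free hypothesis. Hence $\F'\cap\St_r^{[r-1]}\subseteq\St_r^{[r-1]}(C_1,\dots,C_j:1)$. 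A size comparison via inclusion-exclusion for $|\St_r^{[r-1]}(C_1,\dots,C_j:1)|$, combined with $|\F|\geq A(\gamma)$, further forces the $C_i$'s to share a common $(k-1)$-core $E$: otherwise the count drops by $\Theta(n^{k-j-1})$ below the level needed. Setting $\{x_i\}:=C_i\setminus E$ identifies $C_i$ with the $A_i$ of Definition~\ref{def3}, and I conclude $\F\subseteq\J^{1,1}_{j,r}\cup\B$ for $\B:=\F\cap(\St_r\setminus\J^{1,1}_{j,r})$, which is the desired contradiction since $j\leq\gamma-1$.

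The main obstacle is this final size-based argument pinning down the common $(k-1)$-core of the $C_i$'s: it requires quantitative inclusion-exclusion counting of $|\St_r^{[r-1]}(C_1,\dots,C_j:1)|$ as a function of the intersection pattern of the $C_i$'s, which goes beyond the plain EMC-stability statement. The EMC-stability step and the multipartite obstruction are standard in this regime for large $n$, so once the core is identified the matching of $\F$ with the $\J^{1,1}_{j,r}$ template is immediate and Theorem~\ref{last1} closes the proof.
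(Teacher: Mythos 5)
Your overall plan --- deduce $\ell_{r+1}(\F)\geq\gamma$ from the structural hypothesis and then quote Theorem~\ref{last1} --- is a different route from the paper (which proves the corollary by repeating the induction on $r$ from the proof of Theorem~\ref{last}, using the Erd\H{o}s--Stone--Simonovits step to isolate a dominant star and reducing to the $r=1$ corollary, where Theorem~\ref{Kup} does the structural work), but as executed it has a genuine gap at its first substantive step. You conclude, from $|\F'|\geq A(\gamma)-j$ together with Frankl's matching theorem and Frankl--Kupavskii stability, that the $(r+1)$-matching-free family $\F'$ is covered by $r$ elements. For $\gamma\geq 2$ this does not follow from size alone: the largest family with no $(r+1)$-matching that is \emph{not} covered by $r$ elements has size about ${n\choose k}-{n-r\choose k}-{n-k-r\choose k-1}+1$ (take all sets meeting $[r-1]$, all sets containing $r$, avoiding $[r-1]$ and meeting a fixed $k$-set $B$ disjoint from $[r]$, plus $B$ itself), while the corollary's bound $A(\gamma)$ is smaller than this by ${n-k-r\choose k-2}-O(n^{k-3})=\Theta(n^{k-2})$ once $\gamma\geq2$. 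So a non-trivially-covered $\F'$ of the relevant size genuinely can occur; ruling it out (or rather, showing that in that case $\F$ still sits inside some $\J^{1,1}_{i,r}\cup\B$ with $i\leq\gamma-1$, i.e.\ that the few sets of $\F$ outside an $r$-element cover form a sunflower with a common $(k-1)$-core) requires the structural hypothesis, which your argument never invokes at this point, and a hierarchy theorem for families of matching number $r$ analogous to Theorem~\ref{Kup}; the HM-type stability of \cite{Frankl13,FK} that you cite does not provide it. The ``inclusion--exclusion forces a common $(k-1)$-core'' step that you defer is exactly this missing work --- it is what Lemma~\ref{B:lem}(c), Theorem~\ref{Kup} and the case analysis in the proof of Theorem~\ref{HMnew2} accomplish in the $r=1$ base case.

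Two further, more local points. First, your claim that every $A\in\F'\cap\St_r^{[r-1]}$ must meet every $C_i$ is false even for the extremal family $\J^{1,s_r}_{\gamma,r}$: each $\B_j$ in Definition~\ref{def3} consists of sets disjoint from $A_j$. The correct assertion is that at most $s_r-1$ members of $\F$ can be disjoint from a fixed $C_i$, since building the forbidden $K_{s_1,\ldots,s_r,1}$ needs $s_r$ sets disjoint from $C_i$, not one; this is repairable (the exceptional sets lie in $\St_r$ and can be absorbed into $\B$), but it shows the multipartite obstruction is being used more loosely than it can bear. Second, even granting an $r$-element cover, your argument only treats the removed sets $C_1,\ldots,C_j$; if $\F'$ itself is non-trivial (Hilton--Milner-like), the uncovered set(s) inside $\F'$ must also be incorporated into the sunflower structure, and your proposal does not address them. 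So the reduction to Theorem~\ref{last1} is not established by the tools you invoke, whereas the paper's induction-on-$r$ argument avoids the issue by pushing everything down to $r=1$, where Theorem~\ref{Kup} is available.
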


\section{ Proofs}

Before  the proof of Theorem~\ref{HMnew3}, let us state an interesting lemma from \cite{GR}.
 Here we show that a strong generalization of Lemma~\ref{GR:lem} is true.

\begin{alphlemma}{\rm\cite{GR}}\label{GR:lem}
Let $s\leq t$ and let $A_1,A_2,\ldots, A_{s+1}$ be $k$-sets on $[n]$ such that $1\not\in\cup_{i=1}^{s+1}A_i$.
 Suppose that  $\F'$ is a subfamily of $\St_1$ such that for 
$\F= \F'\cup\{A_1,A_2,\ldots, A_{s+1}\}$ the induced subgraph of  $\KG_{n,k}[\F]$ is $K_{s,t}$-free. 
There exists $n_0=n(k, s, t)$ such that if $n\geq n_0$ holds, then we have
$$|\F|\leq {n-1\choose k-1}-{n-\lfloor{(s+1)k\over 2}\rfloor-1\choose k-1}+(s+1)t.$$
\end{alphlemma}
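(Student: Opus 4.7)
The plan is to bound $|\F'|$, then conclude using $|\F|=|\F'|+(s+1)$. Set $T:=\{x\in[n]:x\in A_i\cap A_j\text{ for some }i\ne j\}$, the set of multiply covered points; a double count of $\sum_i|A_i|=(s+1)k$ yields $|T|+|\bigcup_iA_i|\le(s+1)k$, so $|T|\le\lfloor(s+1)k/2\rfloor$, and the sets $U_i:=A_i\setminus T$ are pairwise disjoint.

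I would partition $\F'$ into three groups. Group (I) consists of those $F$ with $F\cap T\ne\varnothing$; its size is at most $\binom{n-1}{k-1}-\binom{n-|T|-1}{k-1}\le\binom{n-1}{k-1}-\binom{n-\lfloor(s+1)k/2\rfloor-1}{k-1}$. Group (II) consists of those $F$ with $F\cap T=\varnothing$ meeting at least two of the $A_i$'s, which forces $F$ to contain distinct elements from two different $U_i$'s; its size is at most $\binom{s+1}{2}k^2\binom{n-3}{k-3}=O(n^{k-3})$. Group (III) consists of those $F$ with $F\cap T=\varnothing$ that are disjoint from at least $s$ of the $A_i$'s.

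The key step bounds group (III) via $K_{s,t}$-freeness. For each $s$-subset $S\subseteq[s+1]$ set $B_S:=\{F\in\F':F\cap A_i=\varnothing\text{ for all }i\in S\}$, so group (III) is contained in $\bigcup_{|S|=s}B_S$. If the $A_i$'s indexed by $S$ are pairwise intersecting---so $\{A_i\}_{i\in S}$ is an independent set in $\KG_{n,k}$---then any $t$ members of $B_S$ (all passing through $1$, hence pairwise intersecting) together with $\{A_i\}_{i\in S}$ would exhibit a $K_{s,t}$, forcing $|B_S|\le t-1$. Summing over the $s+1$ such subsets gives a bound of $(s+1)(t-1)$ on the size of group (III) in the generic case. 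The main obstacle is the degenerate case in which some $s$-subset of $\{A_1,\ldots,A_{s+1}\}$ contains a disjoint pair; there I would argue, via a careful sub-case analysis, that either a $K_{s,t}$ can be exhibited differently (e.g.\ with sides mixing $\F'$-members and $A_i$'s, or with the $t$-side entirely in $\{A_i\}$ when $t\le s+1$), or else the failure of pairwise intersection forces $|\bigcup_iA_i|$ to be larger and hence $|T|$ strictly below $\lfloor(s+1)k/2\rfloor$, leaving quantitative slack of order $n^{k-2}$ in group (I) that absorbs any excess in group (III).

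Assembling the contributions of groups (I), (II), (III), and the $s+1$ sets $\{A_i\}$ themselves, and taking $n\ge n_0(k,s,t)$ large enough so that the $O(n^{k-3})$ term from group (II) is absorbed, yields $|\F|\le\binom{n-1}{k-1}-\binom{n-\lfloor(s+1)k/2\rfloor-1}{k-1}+(s+1)t$, as claimed.
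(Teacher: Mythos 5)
Your overall plan is sound and is essentially the route the paper itself takes: the paper quotes this lemma from [GR] and instead proves the generalization Lemma~\ref{B:lem}, whose part (b) (with $\beta=1$) is exactly your decomposition of $\St_1$ into sets meeting $T$, sets avoiding $T$ but meeting two of the pairwise disjoint $U_i$'s (the $O(n^{k-3})$ term), and sets disjoint from at least $s$ of the $A_i$'s, the last group being bounded by $t-1$ for each $s$-subset as in case (i)(c) of the proof of Theorem~\ref{HMnew3}. However, two steps of your write-up do not hold as stated. First, the ``degenerate case'' you flag as the main obstacle is vacuous, and it is precisely the place you leave as an unsubstantiated sketch. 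Here $K_{s,t}$-freeness of $\KG_{n,k}[\F]$ means there is no (not necessarily induced) subgraph isomorphic to $K_{s,t}$; equivalently, $\F$ is $(s,t)$-union intersecting, i.e.\ no $s+t$ distinct members have the union of $s$ of them disjoint from the union of the other $t$. Edges inside a side are irrelevant, so you do not need $\{A_i\}_{i\in S}$ to be pairwise intersecting: if $|B_S|\ge t$, any $t$ members of $B_S$ (they contain $1$, hence are distinct from all $A_i$) together with $\{A_i\}_{i\in S}$ already give a $K_{s,t}$. Thus $|B_S|\le t-1$ for every $s$-subset $S$ unconditionally, group (III) is at most $(s+1)(t-1)$, and the deferred sub-case analysis (mixed sides, $t$-side inside $\{A_i\}$, slack in $|T|$) is unnecessary.

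Second, the final assembly has a genuine gap: you add the $O(n^{k-3})$ bound for group (II) on top of $\binom{n-1}{k-1}-\binom{n-\lfloor (s+1)k/2\rfloor-1}{k-1}$ and claim it is ``absorbed'' for $n$ large, but if $|T|=\lfloor (s+1)k/2\rfloor$ there is nothing to absorb it into, since group (I) can then equal the main term and the target allows only the additive constant $(s+1)t$. The missing observation (which is how Lemma~\ref{B:lem}(b) handles it) is that the two situations exclude each other: if group (II) is nonempty, some $F$ meets two distinct nonempty $U_i$'s, so at least two points of $\bigcup_i A_i$ lie outside $T$, whence $2|T|\le (s+1)k-2$ and $|T|\le\lfloor (s+1)k/2\rfloor-1$; in that case group (I) sits below the main term by at least $\binom{n-\lfloor (s+1)k/2\rfloor-1}{k-2}=\Omega(n^{k-2})$, which absorbs the $O(n^{k-3})$ once $n\ge n_0(k,s,t)$ (and $k\ge 3$; for $k\le 2$ group (II) is empty anyway). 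If instead $|T|=\lfloor (s+1)k/2\rfloor$, group (II) is empty and the bound follows directly. With these two corrections your argument is complete and coincides with the paper's.
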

 The next lemma provides an interesting and useful generalization of Lemma~\ref{GR:lem}. I believe that Lemma~\ref{B:lem} independently will be a useful result and will have more applications.
\begin{lemma}\label{B:lem}
Let $k, s,$ and $\beta$ be fixed nonnegative integers and $n=n(k,s,\beta)$ is sufficiently large.  Let $A_1,A_2,\ldots, A_{s+\beta}$ be $k$-sets on $[n]$ such that $1\not\in\cup_{i=1}^{s+\beta}A_i$. Then
\begin{enumerate}
\item[(a)] 
$ {n-1\choose k-1}-{n-|T(A_1,\ldots,A_{s+\beta}:s)|-1\choose k-1}\leq|\St_1(A_1,\ldots,A_{s+\beta}:s)|.$
 \item[(b)] 
$|\St_1(A_1,\ldots,A_{s+\beta}:s)|\leq {n-1\choose k-1}-{n-\lfloor{(s+\beta)k\over \beta+1}\rfloor-1\choose k-1}$
and  equality holds if and only if $$|T(A_1,\ldots,A_{s+\beta}:s)|=\lfloor{(s+\beta)k\over \beta+1}\rfloor.$$
In particular,  if 
$|T(A_1,\ldots,A_{s+\beta}:s)|<\lfloor{(s+\beta)k\over \beta+1}\rfloor$, then  
$$ |\St_1(A_1,\ldots,A_{s+\beta}:s)|= {n-1\choose k-1}-{n-|T(A_1,\ldots,A_{s+\beta}:s)|-1\choose k-1}+O(n^{k-3}).$$ 
\item[(c)]
For $s=1$, we have
$|\St_1(A_1,\ldots,A_{1+\beta}:1)|\leq 
{n-1\choose k-1}-{n-k\choose k-1}+ {n-k-\beta-1\choose k-\beta-2}.$
Moreover for $\beta\geq1$, equality holds if and only if $|T(A_1,\ldots,A_{1+\beta}:1)|=k-1.$

\noindent In particular,  if 
$|T(A_1,\ldots,A_{1+\beta}:1)|< k-1$, then  
$$ |\St_1(A_1,\ldots,A_{1+\beta}:1)|= {n-1\choose k-1}-{n-|T(A_1,\ldots,A_{1+\beta}:1)|-1\choose k-1}+O(n^{k-3}).$$ 

\end{enumerate}
\end{lemma}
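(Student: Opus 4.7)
\textbf{Proof plan for Lemma~\ref{B:lem}.}
For each $x\in[n]$ let $\mu(x):=|\{i:x\in A_i\}|$. Then $x\in T(A_1,\ldots,A_{s+\beta}:s)$ iff $\mu(x)\geq\beta+1$, while $A\in\St_1$ lies in $\St_1(A_1,\ldots,A_{s+\beta}:s)$ iff $A$ meets at least $\beta+1$ of the $A_i$. Two elementary ingredients drive everything: the identity $\sum_x\mu(x)=(s+\beta)k$ and the bound $|\{i:A\cap A_i\neq\varnothing\}|\leq\sum_{x\in A}\mu(x)$. Part~(a) is then immediate: if $x\in A\cap T$ then $A$ meets $\mu(x)\geq\beta+1$ of the $A_i$, so $\{A\in\St_1:A\cap T\neq\varnothing\}\subseteq\St_1(A_1,\ldots,A_{s+\beta}:s)$, and counting the complement yields $\binom{n-1}{k-1}-\binom{n-|T|-1}{k-1}$.

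For part~(b), separating $\sum_x\mu(x)$ by membership in $T$ and using $\mu(x)\geq\beta+1$ on $T$ gives $(\beta+1)|T|\leq(s+\beta)k$, hence $|T|\leq M:=\lfloor(s+\beta)k/(\beta+1)\rfloor$. I split $\St_1(A_1,\ldots,A_{s+\beta}:s)$ into $P_1:=\{A:A\cap T\neq\varnothing\}$, of size $\binom{n-1}{k-1}-\binom{n-|T|-1}{k-1}$ by part~(a), and $P_2:=\St_1(A_1,\ldots,A_{s+\beta}:s)\setminus P_1$. For $A\in P_2$ every element of $A\cap(\cup_j A_j)$ has multiplicity at most $\beta$, so $A$ meets at most $\beta\cdot|A\cap(\cup_j A_j)|$ of the $A_i$; when $\beta\geq 1$ this forces $|A\cap(\cup_j A_j)|\geq 2$, while $\beta=0$ gives $T=\cup_j A_j$ and hence $P_2=\varnothing$. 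In either case $|P_2|\leq\binom{(s+\beta)k}{2}\binom{n-3}{k-3}=O(n^{k-3})$.

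The crux of part~(b) is to promote the crude bound $|P_2|=O(n^{k-3})$ to $|P_2|=0$ whenever $|T|=M$, which is exactly what is needed for equality. The sharpening comes from a tight multiplicity accounting: writing $(s+\beta)k=(\beta+1)M+r$ with $0\leq r\leq\beta$, the condition $|T|=M$ gives $\sum_{x\notin T}\mu(x)\leq r\leq\beta$, so any $A\in P_2$ meets at most $\sum_{x\in(\cup_j A_j)\setminus T}\mu(x)\leq\beta$ of the $A_i$, contradicting membership in $\St_1(A_1,\ldots,A_{s+\beta}:s)$. Hence $P_2=\varnothing$ and $|\St_1(A_1,\ldots,A_{s+\beta}:s)|=\binom{n-1}{k-1}-\binom{n-M-1}{k-1}$ exactly. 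When $|T|<M$, the difference $\binom{n-|T|-1}{k-1}-\binom{n-M-1}{k-1}$ is $\Theta(n^{k-2})$ and strictly dominates the $O(n^{k-3})$ contribution of $P_2$ for large $n$, yielding both the strict inequality and the ``in particular'' asymptotic formula.

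For part~(c), specialising to $s=1$ gives $T=\cap_{i=1}^{1+\beta}A_i$, and $\St_1(A_1,\ldots,A_{1+\beta}:1)$ consists of $A\in\St_1$ meeting every $A_i$. Set $t_0:=|T|$ and $B_i:=A_i\setminus T$; pairwise distinctness of the $A_i$ (which may be assumed) forces $t_0\leq k-1$, and $\cap_i B_i=\varnothing$ by the definition of $T$. Reusing the $P_1/P_2$ decomposition, in the extremal case $t_0=k-1$ each $B_i=\{x_i\}$ with all $x_i$ distinct, so $A\in P_2$ must contain $\{1,x_1,\ldots,x_{1+\beta}\}$, giving exactly $\binom{n-k-\beta-1}{k-\beta-2}$ such sets and matching the stated bound with equality. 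For $t_0\leq k-2$, $\cap_i B_i=\varnothing$ combined with $|B_i|\geq 2$ forces every transversal of $\{B_1,\ldots,B_{1+\beta}\}$ to have size at least $2$, so $|A\cap\cup_i B_i|\geq 2$ for $A\in P_2$, whence $|P_2|=O(n^{k-\beta-2})=O(n^{k-3})$ for $\beta\geq 1$; this is dwarfed by $\binom{n-t_0-1}{k-1}-\binom{n-k}{k-1}=\Theta(n^{k-2})$, yielding both the strict inequality and the asymptotic ``in particular'' statement.
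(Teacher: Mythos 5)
Your proof is correct, and its skeleton coincides with the paper's: both decompose $\St_1(A_1,\ldots,A_{s+\beta}:s)$ into the sets meeting $T$ (counted exactly, which is (a)) and the sets avoiding $T$ yet meeting at least $\beta+1$ of the $A_i$'s, and both bound the latter class by $O(n^{k-3})$ after noting that such a set must contain at least two elements of $\bigl(\cup_i A_i\bigr)\setminus T$. Where you genuinely deviate is the equality case of (b): the paper shows that $|T|=\lfloor (s+\beta)k/(\beta+1)\rfloor$ forces $T=A_{i_1}\cup\cdots\cup A_{i_s}$ for some $s$ indices (while $T$ is contained in every union of $s$ of the $A_i$'s) and deduces from this that the exceptional class is empty, whereas you kill it directly by the incidence count $\sum_{x\notin T}\mu(x)\le r\le\beta$, so any $A$ with $A\cap T=\varnothing$ meets at most $\beta$ of the $A_i$'s. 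Your version is shorter and cleaner for the lemma itself; the paper's structural identification of $T$ as an $s$-fold union is, however, reused in the proof of Theorem 1, so it earns its keep there. Two small blemishes, neither a real gap: in (c) the intermediate claim $|P_2|=O(n^{k-\beta-2})$ does not follow from ``transversal size at least $2$'' and is false in general (e.g.\ $B_1=\{a,b\}$, $B_2=\{a,c\}$, $B_3=\{b,c\}$ gives $\Theta(n^{k-3})$ such sets), though the bound $O(n^{k-3})$ you actually use is justified; and your assertion $t_0\le k-1$ silently excludes the case $\beta=0$, $|T|=k$, which your case analysis does not cover but where (c) holds with equality by a one-line direct count, as in the paper.
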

\begin{proof}
For abbreviation let $T(A_1,\ldots,A_{s+\beta}:s)=T_{\beta}$. 
For the proof of Part (a), let   $1\in A$. If $A\cap T_{\beta}\neq\varnothing$, then
 $A$ is disjoint from at most $s-1$ sets of $A_1,A_2,\ldots, A_{s+\beta}$. Therefore,
 $ {n-1\choose k-1}-{n-|T_{\beta}|-1\choose k-1}\leq |\St_1({A_1,\ldots,A_{s+\beta}}:s)|.$

Now we prove Part (b). One can check that $|T_\beta|\leq \lfloor{(s+\beta)k\over \beta+1}\rfloor.$
Assume that $|T_{\beta}|<\lfloor{(s+\beta)k\over\beta+1}\rfloor$.  Let $A\in\St_1(A_1,\ldots,A_{s+\beta}:s)$. Therefore,
$A$ intersects  at least $\beta+1$ of $A_1,\ldots,A_{s+\beta}$. We have two possibilities for $A$. Either $A\cap T_\beta\neq\varnothing$ or
$A\cap T_\beta=\varnothing$ and $A$ intersects  at least $\beta+1$ of $A_1,\ldots,A_{s+\beta}$.
The number of members in $\St_1$ which meet $T_{\beta}$
is equal to  ${n-1\choose k-1}-{n-|T_{\beta}|-1\choose k-1}$.
The number of $k$-sets in $\St_1$, which intersect  at least $\beta+1$ of $A_1,\ldots,A_{s+\beta}$ and have no common
element with $T_\beta$, is at most 
\begin{align*}
& \sum_{i_1,\ldots,i_{\beta+1}\in[s+\beta]}\,\,\sum_{\varnothing\neq  B_1\subseteq A_{i_1}\setminus T_{\beta}}\cdots\sum_{ \varnothing\neq B_{\beta+1}\subseteq A_{i_{\beta+1}}\setminus T_{\beta}}
{n-| T_{\beta}|-|\cup_{i=1}^{\beta+1} B_i|-1\choose k-|\cup_{i=1}^{\beta+1}B_i|-1}\\
&\leq \sum_{i_1,\ldots,i_{\beta+1}\in[s+\beta]}\,\,\prod_{j=1}^{\beta+1}2^{|A_{i_j}\setminus T_{\beta}|}{n-|T_{\beta}|-3\choose k-3}\\
&\leq {s+\beta \choose 1+\beta}2^{k(\beta+1)}{n-|T_{\beta}|-3\choose k-3}.
\end{align*}
Therefore,
\begin{align}\label{eq2}
|\St_1(A_1,\ldots,A_{s+\beta}:s)|&\leq {n-1\choose k-1}-{n-|T_{\beta}|-1\choose k-1}\\
 ~&+\sum_{i_1,\ldots,i_{\beta+1}\in[s+\beta]}\sum\limits_{\substack{\varnothing\neq B_1\subseteq A_{i_1}\setminus T_{\beta} \\ 
 \vdots\\{	\varnothing\neq B_{\beta+1}\subseteq A_{i_{\beta+1}}\setminus T_{\beta}}}}
{n-| T_{\beta}|-|\cup_{i=1}^{\beta+1}B_i|-1\choose k-|\cup_{i=1}^{\beta+1}B_i|-1} \nonumber\\
~&\leq {n-1\choose k-1}-{n-|T_{\beta}|-1\choose k-1} + {s+\beta \choose 1+\beta}2^{k(\beta+1)}{n-|T_{\beta}|-3\choose k-3} \nonumber
\end{align}
 and then
\begin{align*}
|\St_1(A_1,\ldots,A_{s+\beta}:s)|&\leq {n-1\choose k-1}-{n-|T_{\beta}|-1\choose k-1} + O(n^{k-3})\\
&={n-1\choose k-1}-{n-\lfloor{(s+\beta)k\over \beta+1}\rfloor-1\choose k-1}-
\sum_{i=|T_\beta|+1}^{\lfloor{(s+\beta)k\over \beta+1}\rfloor} {n-i-1\choose k-2} + O(n^{k-3})\\
&<{n-1\choose k-1}-{n-\lfloor{(s+\beta)k\over \beta+1}\rfloor-1\choose k-1}
\end{align*}
provided that $n$ is sufficiently large.

Now assume that we have the equality $|\St_1(A_1,\ldots,A_{s+\beta}:s)|={n-1\choose k-1}-{n-\lfloor{(s+\beta)k\over \beta+1}\rfloor-1\choose k-1}.$ 
By contradiction assume that $|T_\beta|< \lfloor{(s+\beta)k\over \beta+1}\rfloor$.  Using the same reasoning one may verify that  
 when $n$ is sufficiently large, then $|\St_1(A_1,\ldots,A_{s+\beta}:s)|$ is less than ${n-1\choose k-1}-{n-\lfloor{(s+\beta)k\over \beta+1}\rfloor-1\choose k-1}$ which is not possible.

Now suppose that  $|T_{\beta}|=\lfloor{(s+\beta)k\over \beta+1}\rfloor$. 
To prove the last part of  (b), it suffices  to show that
$$\St_1\setminus\St_1(A_1,\ldots,A_{s+\beta}:s)=\{A|1\in A,\,\, A\cap T_{\beta}=\varnothing\}.$$
From the division algorithm, we know that $(s+\beta)k=\lfloor{(s+\beta)k\over \beta+1}\rfloor(\beta+1)+r$ where $0\leq r\leq \beta$.
Since $|T_{\beta}|=\lfloor{(s+\beta)k\over \beta+1}\rfloor$, there are at most $r\leq\beta$ elements  
  in $\cup_{i\in[s+\beta]}A_i$ which are not in $T_{\beta}$. Therefore, there exist $1\leq i_1<\ldots<i_s\leq s+\beta$ such that 
$A_{i_1}\cup\cdots\cup A_{i_s}\subseteq T_{\beta}.$ On the other hand,  for every $1\leq j_1<\ldots<j_s\leq s+\beta$, we have
 $  T_{\beta}\subseteq A_{j_1}\cup\cdots\cup A_{j_s}.$ Therefore, $ T_{\beta}=A_{i_1}\cup\cdots\cup A_{i_s}.$ 
Assume that $1\in A$ and $A\cap T_{\beta}=\varnothing$. Hence, $A\cap (A_{i_1}\cup A_{i_2}\cup\cdots\cup A_{i_s})=\varnothing$. Therefore,
$A$ is disjoint from at least $s$ $k$-subsets of $A_1,A_2,\ldots,A_{s+\beta}$ and consequently 
$A\in\St_1\setminus\St_1(A_1,\ldots,A_{s+\beta}:s)$.  If $1\in A$ and $A$ is disjoint from at least 
$s$ $k$-subsets of $A_1,A_2,\ldots,A_{s+\beta}$, then it is clear that each element of $A$ appears in at most $\beta$ of $A_i$'s and hence we have 
$A\cap T_{\beta}=\varnothing.$ 

 For the proof of (c), if $|T_{\beta}|\leq k-2$, then the proof is the same as the first part of (b). 
  Hence, we may assume that $|T_{\beta}|$ is $k-1$ or $k$.   Note that when $s=1$,
 $T_{\beta}=\cap_{i=1}^{1+\beta}A_i$.
 If $|T_{\beta}|=k$, then $\beta$ must be equal to $0$ and consequently $|\St_1(A_1:1)|={n-1\choose k-1}-{n-k-1\choose k-1}$. Thus, we may assume that $|T_{\beta}|=|\cap_{i=1}^{1+\beta}A_i|=k-1$ and $\beta\geq 1$. Then, there exist $\beta+1$ 
 elements in $[n]$, say $x_1,\ldots,x_{\beta+1}$, such that 
 $A_j\setminus T_{\beta}= \{x_j\}.$  
 Let $A\in\St_1(A_1,\ldots,A_{1+\beta}:1)$. Therefore,
$A$ intersects each of $A_1,\ldots,A_{1+\beta}$. We have two possibilities for $A$. Either $A\cap T_\beta\neq\varnothing$ or
$A\cap T_\beta=\varnothing$ and $A$ intersects  all of $A_1,\ldots,A_{1+\beta}$.
There are  ${n-1\choose k-1}-{n-k\choose k-1}$ members in $\St_1$ such that $A\cap T_\beta\neq\varnothing$. 
The number of $k$-sets in $\St_1$, which intersect  all of $A_1,\ldots,A_{1+\beta}$ and have no common
element with $T_\beta$, is equal ${n-k-\beta-1\choose k-\beta-2}$. Therefore,
$$|\St_1(A_1,\ldots,A_{1+\beta}:1)|= {n-1\choose k-1}-{n-|T_{\beta}|-1\choose k-1} +{n-k-\beta-1\choose k-\beta-2}.$$
Note that when $\beta\geq k-1$, we have ${n-k-\beta-1\choose k-\beta-2}=0$.
\end{proof}
In the proof of Theorem~\ref{HMnew3}  in addition to Lemma~\ref{B:lem},  we will use  the following  two results. The first one is a classical result on the number of edges of a $K_{s,t}$-free graph and the second one is a result on the number of disjoint pairs in
a family of $k$-sets $\F$. 
  \begin{alphtheorem}{\rm \cite{Kovari1954}}\label{bip}
 For any two positive integers $s\leq t$, if $G$ is a $K_{s,t}$-free graph with $n$ vertices, then the number of edges of $G$ is at most
 $({1\over 2}+o(1))(t-1)^{1\over s} n^{2-{1\over s}}$.
    \end{alphtheorem}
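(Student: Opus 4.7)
The plan is to prove the Kővári–Sós–Turán bound by the classical double-counting of ``cherries,'' i.e.\ copies of the star $K_{1,s}$ in $G$, combined with Jensen's inequality. Write $e=e(G)$ for the number of edges and $d(v)$ for the degree of vertex $v$.

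First I would count incidences $(v,S)$ where $v\in V(G)$ and $S\subseteq N(v)$ with $|S|=s$. On the one hand this equals $\sum_{v\in V(G)}\binom{d(v)}{s}$. On the other hand, for each fixed $s$-subset $S\subseteq V(G)$ the number of vertices $v$ with $S\subseteq N(v)$ is at most $t-1$: otherwise $S$ together with $t$ such common neighbors would form a copy of $K_{s,t}$ in $G$, contradicting the hypothesis. Hence
\begin{equation*}
\sum_{v\in V(G)}\binom{d(v)}{s}\;\le\;(t-1)\binom{n}{s}.
\end{equation*}

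Second, I would apply Jensen's inequality to the convex function $x\mapsto \binom{x}{s}$ (extended to real arguments by the polynomial $x(x-1)\cdots(x-s+1)/s!$). Letting $\bar d=\frac{1}{n}\sum_v d(v)=\frac{2e}{n}$ denote the average degree, convexity yields
\begin{equation*}
n\binom{\bar d}{s}\;\le\;\sum_{v\in V(G)}\binom{d(v)}{s}\;\le\;(t-1)\binom{n}{s}.
\end{equation*}
Since $\binom{\bar d}{s}=\frac{\bar d^{\,s}}{s!}(1-o(1))$ when $\bar d\to\infty$ (which we may assume, since otherwise $e=O(n)$ and the claimed bound is trivial for large $n$), and $\binom{n}{s}=\frac{n^s}{s!}(1+o(1))$, the previous inequality rearranges to
\begin{equation*}
\bar d^{\,s}\;\le\;(t-1)\,n^{s-1}\,(1+o(1)),\qquad\text{so}\qquad \bar d\;\le\;(t-1)^{1/s}\,n^{1-1/s}\,(1+o(1)).
\end{equation*}
Multiplying by $n/2$ and recalling $e=\bar d\,n/2$ produces
\begin{equation*}
e\;\le\;\Bigl(\tfrac12+o(1)\Bigr)(t-1)^{1/s}\,n^{\,2-1/s},
\end{equation*}
which is the desired bound.

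The main obstacle is purely bookkeeping: making the passage from $\binom{\bar d}{s}$ to $\bar d^{\,s}/s!$ rigorous when $\bar d$ is small, so that the $o(1)$ error term is uniform in $G$. I would handle this by splitting into the case $\bar d\le n^{1-1/s}$, where the bound follows immediately from $e\le \bar d\,n/2$, and the case $\bar d> n^{1-1/s}\to\infty$, where the polynomial identity for $\binom{x}{s}$ gives $\binom{\bar d}{s}\ge \bar d^{\,s}/s!-O(\bar d^{\,s-1})$ and the absorption of lower-order terms into the $o(1)$ factor is routine. No further ingredients beyond the star double count and Jensen are needed.
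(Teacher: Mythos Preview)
The paper does not supply its own proof of this statement: Theorem~\ref{bip} is quoted from \cite{Kovari1954} as a classical tool and is invoked (in case~(iii) of the proof of Theorem~\ref{HMnew3}) without argument. So there is no in-paper proof to compare against.

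Your proposal is the standard K{\H o}v\'ari--S\'os--Tur\'an double-counting argument and is correct in outline. Two small remarks. First, the polynomial $x\mapsto\binom{x}{s}$ is not convex on all of $\mathbb{R}$ when $s\ge 3$; the usual workaround is either to note that it is convex on $[s-1,\infty)$ and that vertices with $d(v)<s$ contribute $0$ to the left side, or to bypass Jensen entirely by bounding $\binom{d(v)}{s}\ge (d(v)-s+1)^s/s!$ and summing. Either fix is routine. Second, your case split in the final paragraph already disposes of the low-average-degree regime, so the asymptotic manipulation is justified as written. Since the paper simply cites the result, your argument is in fact more detailed than what the paper provides.
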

\begin{alphlemma}{\rm\cite{MR3482268}}\label{lem:1}
Let  $\F$ be a family $k$-sets on $[n]$. Then the number  of disjoint pairs in $\F$ is at least 
$ {\ell(\F)^2\over 2{2k\choose k}}.$
\end{alphlemma}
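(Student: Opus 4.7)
The plan is to interpret the statement as a lower bound on the number of edges of the disjoint-pairs graph $H$ of $\F$---the graph on vertex set $\F$ whose edges are the unordered pairs $\{A,B\}\subseteq\F$ with $A\cap B=\varnothing$. Writing $\tau:=\ell(\F)$, we want to show $|E(H)|\geq \tau^2/(2\binom{2k}{k})$. The approach combines a greedy matching construction in $H$ with Bollob\'as's set-pair inequality applied to that matching.

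First we would produce a matching $M=\{(A_i,B_i):1\leq i\leq\nu\}$ in $H$ greedily: set $\F_0:=\F$ and, while $\F_j$ still contains a disjoint pair, pick one, say $(A_{j+1},B_{j+1})$, and set $\F_{j+1}:=\F_j\setminus\{A_{j+1},B_{j+1}\}$. When the process terminates the remaining family is intersecting, and since removing the $2\nu$ endpoints of $M$ turns $\F$ into this intersecting family, the definition of $\ell(\F)$ forces $2\nu\geq\tau$, giving $\nu\geq\tau/2$.

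Next we would apply the skew form of Bollob\'as's set-pair inequality to $M$. Since each $A_i,B_i$ is a $k$-set and $A_i\cap B_i=\varnothing$, if some index subset $S\subseteq[\nu]$ of size at least $\binom{2k}{k}+1$ admitted a linear ordering with $A_i\cap B_j\neq\varnothing$ for all $i<j$ in $S$, then Bollob\'as would force $|S|\leq\binom{2k}{k}$, a contradiction. Hence every $(\binom{2k}{k}+1)$-subset of $[\nu]$ contains some $i\neq j$ with $A_i\cap B_j=\varnothing$, which is an edge of $H$ outside $M$. Averaging this supersaturation over all $(\binom{2k}{k}+1)$-subsets of $[\nu]$ produces at least $\nu(\nu-1)/[\binom{2k}{k}(\binom{2k}{k}+1)]$ cross disjoint pairs; combining with the $\nu$ matching edges and $\nu\geq\tau/2$ yields a quadratic-in-$\tau$ lower bound on $|E(H)|$.

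The main obstacle will be reaching the exact constant $1/(2\binom{2k}{k})$ claimed in the statement, because the naive averaging above only produces $\binom{2k}{k}^2$ in the denominator. To close this gap we would either iterate Bollob\'as---peeling off one cross disjoint pair at a time and contracting the matching accordingly, so that the accumulated count approaches $\binom{\nu}{2}/\binom{2k}{k}$---or exploit the local $2k$-set structure $X_i:=A_i\cup B_i$ and apply the set-pair inequality inside each intersection pattern of the $X_i$'s. The greedy matching plus Bollob\'as is the clean backbone of the argument; the careful accounting that recovers the precise constant is the delicate step.
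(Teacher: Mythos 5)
There is a genuine gap: your argument, even after the ``delicate accounting'' you postpone, cannot reach the constant $\tfrac{1}{2\binom{2k}{k}}$ that the lemma asserts. Your backbone gives two lossy conversions that compound. First, the maximal matching only yields $\nu\geq \ell(\F)/2$, and this factor $2$ gets squared in any bound of the form $D\gtrsim \nu^2$. Second, Bollob\'as applied to the matching pairs only certifies, for each ``bad'' index pair $\{i,j\}$, at least one cross-disjointness $A_i\cap B_j=\varnothing$; the best possible exploitation of this (note: not your averaging, which gives $\binom{2k}{k}^2$ in the denominator, but the sharper observation that the reduced graph on $[\nu]$ whose non-edges are the fully cross-intersecting index pairs has independence number at most $\binom{2k}{k}$, followed by Tur\'an applied to its complement) yields only about $\nu^2/\bigl(2\binom{2k}{k}\bigr)$ disjoint pairs. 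Combining with $\nu\geq\ell(\F)/2$ gives $\ell(\F)^2/\bigl(8\binom{2k}{k}\bigr)$, a factor $4$ short. Your proposed fix (iterating/peeling to push the count toward $\binom{\nu}{2}/\binom{2k}{k}$) attacks only the averaging loss, not the squared loss from $\nu\geq\ell(\F)/2$, so it cannot close the gap; at best you prove a weaker lemma with constant $\tfrac{1}{8\binom{2k}{k}}$ (which, incidentally, would still suffice for the only place the lemma is used in this paper, case (iii) of the proof of Theorem~\ref{HMnew3}, but it does not prove the statement as written).

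For comparison: the paper itself does not prove this lemma; it is quoted from Balogh--Bollob\'as--Narayanan \cite{MR3482268}. A proof that does deliver the stated constant runs differently: first show that some member of $\F$ is disjoint from at least $\ell(\F)/\binom{2k-1}{k-1}=2\ell(\F)/\binom{2k}{k}$ other members. This follows from exactly the greedy process the author uses at the start of the proof of Theorem~\ref{HMnew2}: repeatedly pick a disjoint pair $(B_i,C_i)$ in the current family and delete all sets disjoint from $B_i$; the skew Bollob\'as inequality (Theorem~\ref{bl}) bounds the number of rounds $m$ by $\binom{2k-1}{k-1}$, while the deletions witness $\ell(\F)\leq\sum_i|N(B_i)\cap\F_i|\leq m\cdot\Delta$, where $\Delta$ is the maximum number of sets of $\F$ disjoint from a single member. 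Then induct on $\ell(\F)$, deleting a member of maximum disjointness-degree: since deleting one set decreases $\ell$ by at most one, the count satisfies $D(\F)\geq \tfrac{2\ell}{\binom{2k}{k}}+\tfrac{(\ell-1)^2}{2\binom{2k}{k}}\geq\tfrac{\ell^2}{2\binom{2k}{k}}$. So the correct proof replaces your matching-plus-reduced-graph counting by a degree bound plus induction; your skeleton is sound but, as you yourself suspected, it is not a proof of the inequality with the stated constant.
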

\noindent For an intersecting family $\F'$ on $[n]$, define $\Delta(\F')=\max_{i\in[n]}|\F'\cap \St_i|$.
\begin{proof}[Proof of Theorem~{\rm\ref{HMnew3}}]
Let $\F$ be an $(s,t)$-union intersecting family  of ${[n]\choose k}$ with $\ell(\F)\geq s+\beta$ 
and cardinality 
$$M= {n-1\choose k-1}-{n-\lfloor{(s+\beta)k\over \beta+1}\rfloor-1\choose k-1}+t-1+s+\hat{\beta}.$$
We consider the following three cases.
\begin{enumerate}
\item[(i)] {\it $\ell(\F)= s+\beta'$ where $\beta\leq \beta'\leq \hat{\beta}$.}

\noindent This  implies that  there exist $A_1, A_2,\ldots,A_{s+\beta'}$ in $\F$ such that 
$\F'=\F\setminus\{A_1, A_2,\ldots,A_{s+\beta'}\}$  is an intersecting family. Without loss of generality assume that
$\Delta(\F')$  has  the maximum possible value.
$|\F'|$ is equal to
  $${n-1\choose k-1}-{n-\lfloor{(s+\beta)k\over \beta+1}\rfloor-1\choose k-1}+t-1+\hat{\beta}-\beta'.$$ 
First we show that for each $i\leq s+\beta'$,  $1\not\in A_i$.
If $\F'\subseteq \St_1$, then by the minimality of $\ell(\F)$, each 
 $A_i$ must be  disjoint from at least one member of $\F'\subseteq \St_1$, so $1\not\in \cup_{i=1}^{s+{\beta'}}A_i$.
 If $\F'\not\subseteq \St_1$,
then by the Hilton-Milner theorem,  we conclude that $|\F'|= {n-1\choose k-1}-{n- k-1\choose k-1}+1$. Consequently, 
there exists a unique $B\in\F'$ such that $\F'\setminus \{B\}\subseteq \St_1$ and moreover, we must have $t=2$, $\lfloor{(s+\beta')k\over \beta'+1}\rfloor=k$ and $\beta'=\hat{\beta}$.
 If there is  $A_i$ such that $1\in A_i$, by the  minimality of $\ell(\F)$,
$A_i$ must be disjoint from $B$. Define $\F''= (\F'\setminus \{B\})\cup\{A_i\}$. Hence, $|\F'|=|\F''|$ and $\Delta(\F'')=\Delta(\F')+1$ which contradicts with  the fact that  $\Delta(\F')$  has the maximum possible value. Then, $1\not\in \cup_{i=1}^{s+{\beta'}}A_i$. We now consider the following three subcases.

\begin{enumerate}
\item{\it $\F'\subseteq \St_1$ and $|T(A_1, A_2,\ldots,A_{s+\beta'}:s)|= \lfloor{(s+\beta')k\over \beta'+1}\rfloor$.}
 
\noindent Since $\beta\leq \beta'\leq \hat{\beta}$, we have $\lfloor{(s+\beta')k\over \beta'+1}\rfloor=\lfloor{(s+\beta)k\over \beta+1}\rfloor$.
In view of the last part of the proof of Lemma~\ref{B:lem}~(b), there  are $1\leq i_1<\ldots<i_s\leq s+\beta'$ such that 
$T(A_1,\ldots,A_{s+\beta'}:s)=A_{i_1}\cup\cdots\cup A_{i_s}.$
Also, note that for every $1\leq j_1<\ldots<j_s\leq s+\beta'$,
 we have 
$$A_{i_1}\cup\cdots\cup A_{i_s}=T(A_1,\ldots,A_{s+\beta'}:s)\subseteq A_{j_1}\cup \cdots\cup A_{j_s}.$$
  From this fact and since $\F$ is an $(s,t)$-union intersecting family, 
  the number of elements of $\F'$ which can be disjoint from   $\cup_{\ell=1}^s A_{j_\ell}$  for some $s$ $k$-sets $A_{j_1}, \ldots, A_{j_s}$  of $A_i$'s
  is at most $t-1$, say $F_1,\ldots, F_{t-1}$. Therefore,
  $\F'\subseteq \St_1(A_1,\ldots,A_{s+\beta'}:s)\cup\{F_1,\ldots, F_{t-1}\}$.
   Thus,  by applying Lemma~\ref{B:lem}~(b), we obtain 
$$|\F'|\leq {n-1\choose k-1}-{n-\lfloor{(s+\beta')k\over \beta'+1}\rfloor-1\choose k-1}+t-1$$
and consequently $\beta'=\hat{\beta}$. Therefore,
$$|\F|\leq {n-1\choose k-1}-{n-\lfloor{(s+\beta)k\over \beta+1}\rfloor-1\choose k-1}+t-1+s+\beta'$$

and  equality holds if and only if $\F$ is isomorphic to 
$$\St_1(A_1,\ldots,A_{s+\beta'}:s)\cup\{A_1,\ldots,A_{s+\beta'}\}\cup\{F_1,\ldots, F_{t-1}\}$$ such that 
$|T(A_1,\ldots,A_{s+\beta'}:s)|=\lfloor{(s+\beta')k\over \beta'+1}\rfloor$, $F_i\in\St_1\setminus\St_1(A_1,\ldots,A_{s+\beta'}:s)$, and
the family $\{A_1,\ldots,A_{s+\beta'},F_1,\ldots, F_{t-1}\}$ is an $(s,t)$-union intersecting family.
\item  {\it $\F'\not\subseteq \St_1$ and $|T(A_1, A_2,\ldots,A_{s+\beta'}:s)|= \lfloor{(s+\beta')k\over \beta'+1}\rfloor$.}

\noindent As $\F'\not\subseteq \St_1$,  
 there exist a $k$-set $B\in\F'$ such that $\F'\setminus \{B\}\subseteq \St_1$ and we  have $t=2$, $\lfloor{(s+\beta')k\over \beta'+1}\rfloor=k$, and
$ \beta'=\hat{\beta}$.
Since $|T(A_1, A_2,\ldots,A_{s+\beta'}:s)|= \lfloor{(s+\beta')k\over \beta'+1}\rfloor=k$, 
in view of  
the last part of the proof of Lemma~\ref{B:lem}~(b), there  are $1\leq i_1<\ldots<i_s\leq s+\beta'$ such that 
$T(A_1,\ldots,A_{s+\beta'}:s)=A_{i_1}\cup A_{i_2}\cup\cdots\cup A_{i_s}.$
As $|T(A_1,\ldots,A_{s+\beta'}:s)|=k$, $s$ must be equal to $1$.  Therefore  $|T(A_1,\ldots,A_{1+\beta'}:1)|=k$.
Since $T(A_1,\ldots,A_{1+\beta'}:1)=\cap_{i=1}^{1+\beta'}A_i$, we obtain
$\beta'=0$. As $t=2$, $s=1$, and $\F$ is $(s,t)$-union intersecting, there is a unique  $B_1\in\F'$ that $A_1\cap B_1=\varnothing.$ 
One can check that
$\F'\setminus \{B,B_1\}\subseteq \St_1(A_1,B:1)$.
Therefore, 
$|\F'|\leq {n-1\choose k-1}-{n- k-1\choose k-1}-{n- k-2\choose k-2}+2$, a contradiction.

\item $|T(A_1,\ldots,A_{s+\beta'}:s)|<\lfloor{(s+\beta)k\over \beta+1}\rfloor$. 

\noindent There is at most one member $B\in\F'$ such that $\F'\setminus\{B\}\subseteq \St_1.$
  Since $\F$ is an $(s,t)$-union intersecting family, every $s$ $k$-sets of $A_i$'s such as $A_{i_1},\ldots, A_{i_s}$ are disjoint from  at most $t-1$ $k$-subsets in $\F'$. Therefore, $$|\F'\setminus\{B\}|\leq |\St_1(A_1,\ldots,A_{s+\beta'}:s)|+{s+\beta'\choose s}(t-1).$$

 Now by applying Lemma~\ref{B:lem}~(b),
we obtain
 $$|\F'|\leq {n-1\choose k-1}-{n-|T(A_1,\ldots,A_{s+\beta'}:s)|-1\choose k-1}+O(n^{k-3})+{s+\beta'\choose s}(t-1)+1.$$
Since $|T(A_1,\ldots,A_{s+\beta'}:s)|<\lfloor{(s+\beta')k\over \beta'+1}\rfloor$ and $k\geq 3$, one can check that
 $$|\F|< {n-1\choose k-1}-{n-\lfloor{(s+\beta')k\over \beta'+1}\rfloor-1\choose k-1}$$
provided that $n$ is sufficiently large, which is not possible.
\end{enumerate}
\item[(ii)] $s+\hat{\beta}+1\leq\ell(\F)\leq M^{1-{1\over 3s}}.$

\noindent Let $\F'$ be a largest intersecting family of $\F$. Hence,  $|\F'|$ is at least  
$$ {n-1\choose k-1}-{n-\lfloor{(s+\beta)k\over \beta+1}\rfloor-1\choose k-1}-M^{1-{1\over 3s}}.$$
As $M=O(n^{k-2})$, we have $M^{1-{1\over 3s}}=o(n^{k-2})$.
Since $\lfloor{(s+\beta)k\over \beta+1}\rfloor\geq k$ and  $M^{1-{1\over 3s}}=o(n^{k-2})$, if $n$ is sufficiently large,
 then we have $$|\F'|> {n-1\choose k-1}-{n-k-1\choose k-1}-{n-k-2\choose k-2}+2.$$
By using Theorem~\ref{HK}, $\F'$ is a star or a Hilton-Milner family. Therefore, without loss of generality we may assume that there exists at most one  $B\in\F'$ 
such that $\F'\setminus\{B\}$ is a subfamily $\St_1$.

First assume that   $\lfloor{(s+\beta)k\over \beta+1}\rfloor\geq k+1$. 
By applying Lemma~\ref{B:lem}~(b) for $\F'\setminus\{B\}$ and one of $s+\hat{\beta}+1$ $k$-subsets of $\F\setminus\F'$, we obtain
$$|\F'|\leq {n-1\choose k-1}-{n-\lfloor{(s+\hat{\beta}+1)k\over \hat{\beta}+2}\rfloor-1\choose k-1}+{s+\hat{\beta}+1\choose s}(t-1)+1.$$ 
Hence, 
$$|\F|\leq {n-1\choose k-1}-{n-\lfloor{(s+\hat{\beta}+1)k\over \hat{\beta}+2}\rfloor-1\choose k-1}+{s+\hat{\beta}+1\choose s}(t-1)+M^{1-{1\over 3s}}+1.$$
Note that ${n-\lfloor{(s+\hat{\beta}+1)k\over \hat{\beta}+2}\rfloor-1\choose k-1}-{n-\lfloor{(s+\hat{\beta})k\over \hat{\beta}+1}\rfloor-1\choose k-1}={n-\lfloor{(s+\hat{\beta})k\over \hat{\beta}+1}\rfloor-1\choose k-2}$. Therefore, $|\F|$ is at most
$$ {n-1\choose k-1}-{n-\lfloor{(s+\hat{\beta})k\over \hat{\beta}+1}\rfloor-1\choose k-1}-{n-\lfloor{(s+\hat{\beta})k\over \hat{\beta}+1}\rfloor-1\choose k-2}+{s+\hat{\beta}+1\choose s}(t-1)+M^{1-{1\over 3s}}+1.$$
This concludes that for   sufficiently large $n$, $|\F|$ is less than  $M$, a contradiction.
 
 Assume that $\lfloor{(s+\beta)k\over \beta+1}\rfloor= k$. Therefore, $\hat{\beta}=\beta$ and  $\lfloor{(s+\beta+1)k\over \beta+2}\rfloor=k$. Take $A_1,\ldots,A_{s+\beta+1}$ in $\F\setminus \F'$. If we have 
$|T(A_1,\ldots,A_{s+\beta+1}:s)|<k$, then by applying Lemma~\ref{B:lem}~(b), we obtain that there exists a positive  constant $c$ such that
$$|\F'\setminus\{B\}|\leq {n-1\choose k-1}-{n-k\choose k-1}+cn^{k-3}+{s+\beta+1\choose s}(t-1).$$

This implies  $$|\F|\leq {n-1\choose k-1}-{n-k-1\choose k-1}-{n-k-1\choose k-2}+cn^{k-3}+{s+\beta+1\choose s}(t-1)+1+M^{1-{1\over 3s}},$$
which is less than $M$ when $n$ is sufficiently large, a contradiction.

 Assume that  $|T(A_1,\ldots,A_{s+\beta+1}:s)|=\lfloor{(s+\beta+1)k\over \beta+2}\rfloor= k$. In view of 
the last part of the proof of Lemma~\ref{B:lem}~(b), there  are $1\leq i_1<\ldots<i_s\leq s+\beta+1$ such that 
$$T(A_1,\ldots,A_{s+\beta+1}:s)=A_{i_1}\cup A_{i_2}\cup\cdots\cup A_{i_s}.$$
 This implies that  $s$ must be equal to $1$. If $s=1$, then we have $T(A_1,\ldots,A_{\beta+2}:1)=\cap_{i=1}^{\beta+2} A_i$ and 
 hence $ |T(A_1,\ldots,A_{\beta+2}:1)|=|\cap_{i=1}^{\beta+2} A_i|\leq k-1$ which contradicts with $|T(A_1,\ldots,A_{\beta+2}:1)|=k$. 

\item[(iii)]  $\ell(\F)> M^{1-{1\over 3s}}$.

\noindent By Lemma~\ref{lem:1}, we have $e(\KG_{n,k}[\F])\geq {M^{2-{2\over 3s}}\over 2{2k\choose k}}$
and by Theorem~\ref{bip}, $\F$ contains a subgraph which is isomorphic to $K_{s,t}$ when $n$ is sufficiently large. 
\end{enumerate}
\end{proof}
 Here we intend to  elaborate on the  $i$th largest $(s,t)$-union intersecting families for some $i$.
 Assume that $n$ is sufficiently large.
  Let $\{A_1, \ldots,A_s\}$ be $s$ pairwise distinct $k$-subsets of $[n]$. By Definition~\ref{def:1} we know that $T(A_1, \ldots,A_s:s)=\cup_{i=1}^{s}A_i$. 
 Define
  $$\LL\isdef\St_1(A_1,\ldots,A_s:s)\cup\{A_1,\ldots,A_s\}\cup\{F_1,\ldots, F_{t-1}\} $$ where  $F_i\in\St_1\setminus\St_1(A_1,\ldots,A_s:s).$
By using Inequality~(\ref{eq2}), one can verify that $|\LL|$ is equal to $${n-1\choose k-1}-{n-|T(A_1, \ldots,A_s:s)|-1\choose k-1}+s+t-1.$$
Let $n=n(k,s)$ be sufficiently large and $s\geq 2$. 
If  $ \lfloor{(s+1)k\over 2}\rfloor<|T(A_1, \ldots,A_s:s)|\leq sk$, then by using Theorem~\ref{HMnew3}, 
$\LL$ is the $i$th largest $(s,t)$-union intersecting  family,
 where  $i=sk-|T(A_1, \ldots,A_s:s)|+2$.

If  $ |T(A_1, \ldots,A_s:s)|=\lfloor{(s+1)k\over 2}\rfloor$, then $|\LL|$ is equal to ${n-1\choose k-1}-{n-\lfloor{(s+1)k\over 2}\rfloor-1\choose k-1}+s+t-1$.
Let  $\{A'_1, \ldots,A'_s,A'_{s+1}\}$ be $s+1$ pairwise distinct $k$-subsets of $[n]$ such that $T(A'_1, \ldots,A'_{s+1}:s)=\lfloor{(s+1)k\over 2}\rfloor$. 
 Define $$\LL'\isdef\St_1(A'_1,\ldots,A'_{s+1}:s)\cup\{A'_1,\ldots,A'_{s+1}\}\cup\{F'_1,\ldots, F'_{t-1}\}.$$ We have $|\LL'|$
  is equal to ${n-1\choose k-1}-{n-\lfloor{(s+1)k\over 2}\rfloor-1\choose k-1}+s+t$ which is greater than $|\LL|$. Therefore, $\LL'$ and $\LL$
  are the $(\lfloor{(s-1)k\over 2}\rfloor+2)$th and $(\lfloor{(s-1)k\over 2}\rfloor+3)$th largest $(s,t)$-union intersecting  families, respectively.
  
  Now assume that there are $\{A_1, \ldots,A_s\}$ and $\{A'_1, \ldots,A'_{s+1}\}$ such that  $ |T(A_1, \ldots,A_s:s)|=\lfloor{(s+1)k\over 2}\rfloor-1$
  and $ |T(A'_1, \ldots,A'_{s+1}:s)|=\lfloor{(s+1)k\over 2}\rfloor-1$. If $(s+1)k$ is even, then $2 |T(A'_1, \ldots,A'_{s+1}:s)|=(s+1)k-2$.
  Therefore, there are at most two members \ in $\cup_{i=1}^{s+1}A'_i$ such that each of them appears in one of $A'_i$'s. If for each $i\leq s+1$
we have $A'_i\subset T(A'_1, \ldots,A'_{s+1}:s)$, then by using Inequality~(\ref{eq2}),
 we have 
 $$|\LL'|={n-1\choose k-1}-{n-\lfloor{(s+1)k\over 2}\rfloor\choose k-1}+s+t.$$  If for only one $i\leq s+1$ we have
$A'_i\not\subseteq  T(A'_1, \ldots,A'_{s+1}:s)$, then one can construct an $(s,t)$-union intersecting family $\LL'_1$ with
$\ell(\LL'_1)=s+1$ and
 $$|\LL'_1|={n-1\choose k-1}-{n-\lfloor{(s+1)k\over 2}\rfloor\choose k-1}+s+t.$$  
  Now suppose that  $A'_i\not\subseteq T(A'_1, \ldots,A'_{s+1}:s)$ and 
   $A'_j\not\subseteq T(A'_1, \ldots,A'_{s+1}:s)$ for exactly two $1\leq i\neq j\leq s+1$. By using Inequality~\ref{eq2},
 the number of $A\in \St_1$ 
 which has no common element with $T(A'_1, \ldots,A'_{s+1}:s)$ and intersects  at least two of $A'_i$'s
  is ${n-|T(A'_1, \ldots,A'_{s+1}:s)|-3\choose k-3}$. Therefore,  for $0\leq m\leq t-1$, one can construct a maximal $(s,t)$-union family $\LL'_{2,m}$ with
  $\ell(\LL'_{2,m})=s+1$ and  $$|\LL'_{2,m}|={n-1\choose k-1}-{n-\lfloor{(s+1)k\over 2}\rfloor\choose k-1}+{n-\lfloor{(s+1)k\over 2}\rfloor-2\choose k-3}+s+t+m.$$  
Therefore, we have some different types  $(s,t)$-union intersecting families with $\ell(\F)=s+1$,  $|T(A'_1, \ldots,A'_{s+1}:s)|=\lfloor{(s+1)k\over 2}\rfloor-1$, and different sizes and one type   of  $(s,t)$-union intersecting families  with $\ell(\F)=s$,  $|T(A_1, \ldots,A_{s}:s)|=\lfloor{(s+1)k\over 2}\rfloor-1$.

 If $(s+1)k$ is odd, then $2 |T(A'_1, \ldots,A'_{s+1}:s)|=(s+1)k-3$. Therefore, there are at most three members in $\cup_{i=1}^{s+1}A'_i$ such that each of them appears in one of $A'_i$'s.  Using the same discussion as above one can find some different types
 of    $(s,t)$-union intersecting families with
  $\ell(\F)=s+1$,  $|T(A'_1, \ldots,A'_{s+1}:s)|=\lfloor{(s+1)k\over 2}\rfloor-1$ and different sizes.

In the proof of Theorem~\ref{HMnew2}, we need  the following theorem by Frankl~\cite{MR670845} and independently Kalai~\cite{MR770699} which is a generalization of a  classical result due to Bollob{\'a}s~\cite{MR0183653}. 
\begin{alphtheorem}\label{bl}{\rm \cite{MR670845,MR770699}}
Let $\{(A_1,B_1),\ldots,(A_h,B_h)\}$ be a family of pairs of subsets of an arbitrary set with $|A_i|=k$ and $|B_i|=\ell$ for all $1\leq i\leq h$.
If $A_i\cap B_i=\varnothing$ for  $1\leq i\leq h$ and $A_i\cap B_j\neq\varnothing$ for  $1\leq i< j\leq h$, then
$h\leq {k+\ell\choose k}$.
\end{alphtheorem}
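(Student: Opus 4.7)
The plan is to prove Theorem~\ref{bl} via the standard exterior-algebra argument that realises each pair $(A_i,B_i)$ as a wedge-product pair in a $\binom{k+\ell}{k}$-dimensional space, arranging the resulting matrix of pairings to be lower triangular with nonzero diagonal. This is the Lov\'asz-Kalai style proof of the skew Bollob\'as inequality and it seems the cleanest route here.

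First, I would take the ambient ground set $X\supseteq\bigcup_i(A_i\cup B_i)$ and choose an assignment $\phi:X\to\mathbb{R}^{k+\ell}$ in \emph{general position}, meaning that every subfamily of $\{\phi(x):x\in X\}$ of size at most $k+\ell$ is linearly independent; such a $\phi$ exists on a Zariski-open dense subset of $(\mathbb{R}^{k+\ell})^{X}$. For each $i\in[h]$ I would then set
$$U_i\isdef\bigwedge_{x\in A_i}\phi(x)\in\bigwedge\nolimits^{k}\mathbb{R}^{k+\ell},\qquad V_j\isdef\bigwedge_{x\in B_j}\phi(x)\in\bigwedge\nolimits^{\ell}\mathbb{R}^{k+\ell},$$
and, after fixing a top form to identify $\bigwedge\nolimits^{k+\ell}\mathbb{R}^{k+\ell}\cong\mathbb{R}$, define the scalar $M_{ij}\isdef U_i\wedge V_j\in\mathbb{R}$.

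The key computation is that $M_{ij}=0$ if and only if $A_i\cap B_j\neq\varnothing$. In one direction a repeated $\phi(x)$ inside the wedge forces vanishing; in the other, disjointness together with general position makes the $k+\ell$ distinct vectors $\phi(A_i\cup B_j)$ linearly independent, so their wedge is nonzero. Feeding in the hypotheses of the theorem yields $M_{ii}\neq 0$ (from $A_i\cap B_i=\varnothing$) and $M_{ij}=0$ whenever $i<j$ (from $A_i\cap B_j\neq\varnothing$), so $M$ is lower triangular with nonzero diagonal, hence invertible.

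To finish, I would observe that any linear relation $\sum_{i=1}^{h}c_iU_i=0$, after wedging with $V_j$, forces $\sum_ic_iM_{ij}=0$ for every $j$; invertibility of $M$ then gives $c_1=\cdots=c_h=0$. Thus $U_1,\ldots,U_h$ are linearly independent in $\bigwedge\nolimits^{k}\mathbb{R}^{k+\ell}$, a space of dimension $\binom{k+\ell}{k}$, yielding $h\leq\binom{k+\ell}{k}$. The main obstacle will be the general-position construction of $\phi$ and the verification of the ``disjoint implies nonvanishing'' half of the key computation (which uses multilinearity and the fact that linear independence of $\phi(A_i\cup B_j)$ is equivalent to nonvanishing of the corresponding wedge); once these are secured, the lower-triangular structure of $M$ and the final rank bound follow immediately.
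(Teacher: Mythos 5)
The paper does not prove Theorem~\ref{bl}; it is an external result quoted from Frankl \cite{MR670845} and Kalai \cite{MR770699}, so there is no internal proof to compare against. Your argument is exactly the standard exterior-algebra (Kalai-style) proof of the skew Bollob\'as inequality and it is correct: general position can be secured concretely by placing the points on the moment curve in $\mathbb{R}^{k+\ell}$, the sign ambiguity in defining $U_i$ and $V_j$ from an unordered set is harmless, and the triangularity of $(M_{ij})$ together with the wedge-pairing argument does give linear independence of $U_1,\ldots,U_h$ in $\bigwedge^{k}\mathbb{R}^{k+\ell}$, hence $h\leq\binom{k+\ell}{k}$.
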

For  simplicity of notation, for each $1\leq i\leq k-1$, define $N_{i}\isdef{n-1\choose k-1}-{n-k\choose k-1}+{n-k-i\choose k-i-1}$ and 
 for  $k$ define $N_{k}\isdef{n-1\choose k-1}-{n-k\choose k-1}$. 
Note that for  $1\leq i\leq k-1$, we have $N_{i-1}-N_{i}={n-k-i\choose k-i}=\Omega(n^{k-i}).$
\begin{proof}[Proof of Theorem~{\rm\ref{HMnew2}}]
First we show that $\ell(\F)\leq {2k-1\choose k-1}(t-1)$. 
  If $t=1$, $\F$ is intersecting  and hence $\ell(\F)=0$.  Assume that $t\geq 2$ and $\F$ is not intersecting.
Therefore, there exists some disjoint 
pair in $\F$. For a $k$-set $A$, define $N(A)=\{B\in{[n]\choose k}|\, A\cap B=\varnothing \}$.
Define $\F_1=\F$.
For each $i\geq 2$, if there exists some disjoint 
pair in $\F_{i-1}$, 
choose $B_{i-1}\in \F_{i-1}$ and $C_{i-1}\in N(B_{i-1})\cap \F_{i-1}$
 and define $\F_i=\F_{i-1}\setminus (N(B_{i-1})$.  Let $m$ be the largest index $i$ for which $\F_i$ contains some disjoint 
pair. 
For  $m+1\leq j\le 2m$, set $B_j=C_{2m-j+1}$ and $C_j=B_{2m-j+1}$. 
One  may check that the family$\{(B_1,C_1),\ldots,(B_{2m,}C_{2m})\}$
satisfies the condition of Theorem~\ref{bl} for $l=k$ and consequently  $m\leq {2k-1\choose k-1}$. 
Let $\N$ be a subfamily of $\F$ defined as follows
 $$\N=\Big\{F\in\F| {\rm there\,\, is\,\, some\,\,} i\leq m\,\,{\rm such\,\, that}\,\, F\cap B_i=\varnothing \Big\}.$$ 
 Since $\F$ is $(1,t)$-union intersecting, one can verify that $|\N|\le m(t-1)$. Note that $\F_{m+1}$ is an intersecting family and $\F$ is disjoint union of
 $\F_{m+1}$ and $\N$. This yields $\ell(\F)\leq |\N|\leq {2k-1\choose k-1}(t-1)$.


Assume that $|\F|=N_{\gamma}+\gamma t$. 
Let $\F^{*}$ be one of largest intersecting subfamilies of $\F$ such that  $\Delta(\F^*)$ has the maximum possible value.
 Assume that  $\F\setminus\F^{*}=\{A_1,\ldots,A_{\ell(\F)}\}$. 
Therefore, 
$|\F^*|= |\F|-\ell(\F).$
Consider the following three cases.
\begin{enumerate}
\item {\it $\ell(\F)=\gamma$ and $\F^*\subseteq\St_1$. }

\noindent We have $|\F^{*}|=N_{\gamma}+\gamma(t-1)$.
Since   $\ell(\F)=\gamma$ and $\F=\F^{*}\cup\{A_1,\ldots A_\gamma\}$, each $ A_j$ is disjoint from at least one member of $\F^*$ and hence 
 $1\not\in\cup_{j=1}^{\gamma} A_j$. Then
  $$\F^{*}\setminus(\cup_{j=1}^{\gamma} N(A_j))\subseteq \St_1(A_1,\ldots,A_{\gamma}:1).$$ 
  Since $\gamma\leq k-2$, 
   by applying Lemma~\ref{B:lem}~(c), we conclude that 
$|\F^{*}\setminus(\cup_{j=1}^{\gamma} N(A_j))|\leq N_{\gamma}$.
Since $\F$ is $(1,t)$-union intersecting,  for each $j$, $A_j$ is  disjoint from at most $t-1$ members of $\F$. 
As for each $j$, $|N(A_j)\cap \F|\leq t-1$, 
$|\F|=N_{\gamma}+\gamma t$, and
$$\F=\F^{*}\setminus(\cup_{j=1}^{\gamma} N(A_j))\cup(\cup_{j=1}^{\gamma} N(A_j)\cap\F)\cup\{A_1,\ldots,A_{\gamma}\},$$
we have $\F$ is a disjoint union of
$$\F^{*}\setminus(\cup_{j=1}^{\gamma} N(A_j)),N(A_1)\cap \F,\ldots, N(A_{\gamma})\cap \F,\,\, {\rm and}\,\, \{A_1,\ldots,A_{\gamma}\}.$$ Moroever, 
for each $j$, we have $|N(A_j)\cap \F|=t-1$, $N(A_j)\cap \F\subseteq\F^*\subseteq \St_1$, and  $|\F^{*}\setminus(\cup_{j=1}^{\gamma} N(A_j))|=N_{\gamma}$.
From the last equality and by using Lemma~\ref{B:lem}~(c), we obtain
  $$\F^{*}\setminus(\cup_{j=1}^{\gamma} N(A_j))=  \St_1(A_1,\ldots,A_{\gamma}:1)$$
and $|\cap_{j=1}^{\gamma} A_j|=k-1.$ By taking $E=\cap_{j=1}^{\gamma} A_j$ and $J=\{1\}\cup(\cup_{j=1}^{\gamma} A_j\setminus E)$  in Definition~\ref{def1},
one can see that $\F\setminus(\cup_{j=1}^{\gamma} N(A_j))$ is isomorphic to  $\J_{\gamma}$.
For each $j\leq \beta+1$, by taking $\B_{j}=N(A_j)\cap \F$  in Definition~\ref{def2},
one can check that $\F$ is isomorphic to  $\J^{1,t}_{\gamma}$.
By Theorem~\ref{Kup}, $\F^{*}$ is either a star or isomorphic to a subfamily  $\J_i$ where $0\leq i\leq \gamma-1$. 
First let $\F^{*}$  be a star and $\F^{*}\subseteq \St_1$. 
\item{\it $\gamma+1\leq\ell(\F)\leq {2k-1\choose k-1}(t-1)$ and $\F^*\subseteq\St_1$. } 

\noindent Let $A_1,\ldots,A_{\gamma+1}\in \F\setminus\F^*.$ By using minimality of $\ell(\F)$,
 each $A_i$ is disjoint from at least one member of $\F^*$. Therefore, $1\not\in A_i$ for each $i\leq \gamma+1.$ 
  Then
    $$\F^{*}\setminus((\cup_{i=1}^{\gamma+1} N(A_i))\subseteq
     \St_1(A_1,\ldots,A_{\gamma+1}:1)$$
  and by applying Lemma~\ref{B:lem}~(c), we obtain $|\F^{*}\setminus((\cup_{i=1}^{\gamma+1} N(A_i))|\leq N_{\gamma+1}$.
  Since $$\F=(\F^{*}\setminus (\cup_{j=1}^{\gamma+1} N(A_j))\cup (\cup_{i=1}^{\gamma+1}N(A_i)\cap\F)\cup\{A_1,\ldots,A_{\ell(\F)}\},$$
  we have $|\F|\leq N_{\gamma+1}+(\gamma+1)(t-1)+\ell(\F)<N_{\gamma}$,  which is not possible 
when $n$ is sufficiently large.
\item{\it  $\gamma\leq\ell(\F)\leq {2k-1\choose k-1}(t-1)$  and $\F^*$ is not  a star. }

\noindent By Theorem~\ref{Kup}, $\F^*\subseteq \J_{c}$ for  some $1\leq c\leq \beta+1$. Then, for some $b\leq c$,
  there exist $B_1,\ldots,B_{b}\in\F^*$ such that   $\F^*\setminus\{B_1,\ldots, B_{b}\}\subseteq\St_1$ and $B_j\not\in\St_1$.
At most $b-1$ of $A_1,\ldots,A_\gamma$  contain $1$;  otherwise if for $1\leq j_1\leq \cdots\leq j_b\leq  \gamma$ we have
 $1\in\cap_{i=1}^{b}A_{j_i}$,
then $\F'=(\F^*\setminus\{B_1,\ldots,B_{b}\})\cup\{A_{j_1},\ldots,A_{j_b}\}$
 is an intersecting family  with $|\F'|=|\F^*|$ and $\Delta(\F')>\Delta(\F^*)$, 
which  contradicts with  the fact that   $\Delta(\F^*)$ 
has the maximum possible value.
Therefore, without loss of generality
 we can assume that    $A_1,\ldots,A_{b'}$
 do not contain $1$  for $b'=\gamma+1-b$. Hence, 
  $$\F^{*}\setminus((\cup_{j=1}^{b'} N(A_j))\cup\{B_1,\ldots, B_{b}\})\subseteq \St_1(A_1,\ldots,A_{b'},B_1,\ldots, B_{b}:1)$$
  and by  Lemma~\ref{B:lem}~(c), we obtain $|\F^{*}\setminus((\cup_{j=1}^{b'} N(A_j))\cup\{B_1,\ldots, B_{b}\})|\leq N_{\gamma+1}$.
  Since $$\F=(\F^{*}\setminus \cup_{j=1}^{b'} N(A_j))\cup (\cup_{j=1}^{b'}N(A_j)\cap\F)\cup\{A_1,\ldots,A_{\ell(\F)}\},$$
  we obtain $|\F|\leq N_{\gamma+1}+b+b'(t-1)+\ell(\F)<N_{\gamma}$, which is not possible when $n$ is sufficiently large. 
\end{enumerate}
\end{proof}

For the proof of Theorem~\ref{last} we need to use the well-known Erd{\H o}s-Stone-Simonovits theorem~\cite{MR0205876,MR0018807}. 
For a given graph $G$, the {\it Tur\'an number ${\rm ex}(n,G)$}
 is defined to be the maximum number of edges in a graph with $n$ vertices containing no subgraph isomorphic to $G$.  
The   Erd{\H o}s-Stone-Simonovits theorem asserts that for any graph $G$ with $\chi(G)\geq 2$,
${\rm ex}(G,n)=(1-{1\over \chi(G)-1}){n\choose 2}+o(n^2).$

\begin{proof}[Proof of Theorem~{\rm\ref{last}}]
The proof is by induction on $r$. By Theorem~\ref{HMnew3}, the assertion is true when $r=1$. Let $r\geq 2$. Suppose now that the assertion is true for $r-1$. Also, without loss of generality suppose that
 $$|\F|= {n\choose k}-{n-r\choose k}-{n-\lfloor{(s_{r+1}+{\beta})k\over {\beta}+1}\rfloor-r\choose k-1}+s_{r}+s_{r+1}+\hat{\beta}-1.$$
Consider the following cases.

\begin{enumerate}
\item $\max_{i\in[n]} |\F\cap\St_i|\leq{n-1\choose k-1}-{n-\sum_{j=2}^{r+1}s_{j}k-1\choose k-1}+s_{1}$. 

\noindent Then the number of disjoint pair in $\F$ is at least
$${|\F|\choose 2}-\sum_{i\in [n]}{|\F\cap \St_i|\choose 2}\geq (1-{1\over r}){|\F|\choose 2}+o(|\F|^2)$$
provided that $n$ is large enough. Hence, by the Erd{\H o}s-Stone-Simonovits theorem $\KG_{n,k}[\F]$ contains some subgraph isomorphic to $K_{s_1,s_2,\ldots,s_{r+1}}$ provided that n is large enough, which is a contradiction.

\item $\max_{i\in[n]} |\F\cap\St_i|> {n-1\choose k-1}-{n-\sum_{j=2}^{r+1}s_{j}k-1\choose k-1}+s_{1}$.

\noindent  Without loss of generality assume that 
$\max_{i\in[n]} |\F\cap\St_i|=|\F\cap\St_n|$.  Notice that one can assume that $\St_n\subset \F$. Because otherwise, if $\St_n\not\subset \F$, then $|\F\cap\St_n|< {n-1\choose k-1}$. Therefore, 
$$|\F\setminus \St_{n}|\geq {n-1\choose k}-{n-r\choose k}-{n-\lfloor{(s_{r+1}+{\beta})k\over {\beta}+1}\rfloor-r\choose k-1}+s_{r}+s_{r+1}+\hat{\beta}.$$
By induction hypothesis $\KG_{n-1,k}[\F\setminus \St_{n}]$ contains $K_{s_2,\ldots,s_{r+1}}$. As $$ |\F\cap\St_n|> {n-1\choose k-1}-{n-\sum_{j=2}^{r+1}s_{j}k-1\choose k-1}+s_{1},$$
one can greedily pick $s_1$ sets of $\St_n$ such that constructs a copy of $K_{s_1,s_2,\ldots,s_{r+1}}$ in $\KG_{n,k}[F]$, a contradiction.
Since  $\St_n$ is a subfamily of $\F$,  same as previous discussion  $K_{s_2,\ldots,s_{r+1}}$ cannot be  contained in  $\KG_{n-1,k}[\F\setminus \St_{n}]$. 
Therefore, by induction hypothesis,
we have
 $$|\F\setminus \St_{n}|\leq {n-1\choose k}-{n-r\choose k}-{n-\lfloor{(s_{r+1}+{\beta})k\over {\beta}+1}\rfloor-r\choose k-1}+s_{r}+s_{r+1}+\hat{\beta}-1,$$ 
 and the equality holds if and only if $\F\setminus \St_{n}$ is isomorphic to
$$\bigcup_{i=1}^{r-2}(\St_i\setminus\St_n)\cup(\St_{r-1}^{[r-2]}(A_1, A_2,\ldots,A_{s_{r+1}+\hat{\beta}}:s)\setminus\St_n)\cup\{A_1, A_2,\ldots,A_{s_{r+1}+\hat{\beta}}\}\cup\{F_1,\ldots, F_{s_r-1}\}$$ 
such that 
$|T(A_1, A_2,\ldots,A_{s_{r+1}+\hat{\beta}})|=\lfloor{(s_{r+1}+\beta)k\over \beta+1}\rfloor$,
$F_i\in\St_{r-1}\setminus\St_{r-1}^{[r-2]}(A_1, A_2,\ldots,A_{s+\hat{\beta}}:s)$, and $F_i\cap [r-2]=\varnothing$ for each $i$ ( Note that in this step all families are subfamilies of 
${[n-1]\choose k}$ because we remove $\St_n$ from $\F$ so we do not meet $n$.).

Thus,  $$|\F|\leq {n\choose k}-{n-r\choose k}-{n-\lfloor{(s_{r+1}+{\beta})k\over {\beta}+1}\rfloor-r\choose k-1}+s_{r}+s_{r+1}+\hat{\beta}-1,$$ 
 and the equality holds if and only if $\F$ is isomorphic to
$$\bigcup_{i=1}^{r-1}\St_i\cup\St_r^{[r-1]}(A_1, A_2,\ldots,A_{s_{r+1}+\hat{\beta}}:s)\cup\{A_1, A_2,\ldots,A_{s_{r+1}+\hat{\beta}}\}\cup\{F_1,\ldots, F_{s_r-1}\}$$ such that 
$|T(A_1, A_2,\ldots,A_{s_{r+1}+\hat{\beta}})|=\lfloor{(s_{r+1}+\beta)k\over \beta+1}\rfloor$,
$F_i\in\St_r\setminus\St_r^{[r-1]}(A_1, A_2,\ldots,A_{s+\hat{\beta}}:s)$, and $F_i\cap [r-1]=\varnothing$ for each $i$.
\end{enumerate}

\end{proof}
Proofs of Theorem~\ref{last1}  and Corollary~\ref{lastcor} are the same as the proof of Theorem~\ref{last}.
\section*{Acknowledgements}
The author is grateful to Meysam Alishahi and  Amir Daneshgar for their valuable comments. This research  was in part supported by a grant from IPM (No. 98050012).

\bibliographystyle{plain}

\end{document}